\theoremstyle{change}
\newtheorem{Theorem}[subsection]{Theorem}
\newtheorem{Proposition}[subsection]{Proposition}
\newtheorem{subProposition}[subsubsection]{Proposition}
\newtheorem{subLemma}[subsubsection]{Lemma}
\newtheorem{Remark}[subsection]{Remark}
\newtheorem{subRemark}[subsubsection]{Remark}
\let\c@equation=\c@subsubsection
\newenvironment{eqn}{\refstepcounter{subsection}
$$}{\leqno{\rm(\thesubsection)}$$\global\@ignoretrue}
\newenvironment{subeqn}{\refstepcounter{subsubsection}
$$}{\leqno{\rm(\thesubsubsection)}$$\global\@ignoretrue}
\newenvironment{prf}[1]{\trivlist
\item[\hskip \labelsep{\it
#1\hspace*{.3em}}]}{~\hspace{\fill}~$\square$\endtrivlist}
\newenvironment{proof}{\begin{prf}{\bf Proof}}{\end{prf}}
\newcommand{\ol}{\overline}
\newcommand{\pr}{\mathrm{pr}}
\newcommand{\C}{\mathbb C}
\newcommand{\Q}{\mathbb Q}
\newcommand{\Z}{\mathbb Z}
\newcommand{\F}{\mathbb F}
\newcommand{\Fbar}{{\ol{\mathbb F}}}
\newcommand{\Qbar}{{\ol{\mathbb Q}}}
\newcommand{\N}{\mathbb N}
\newcommand{\G}{\mathbb G}
\newcommand{\A}{\mathbb A}
\newcommand{\Zpp}{{\mathbb Z/p^2}}
\newcommand{\Gm}{\G_{\mathrm{m}}}
\newcommand{\Ga}{\G_{\mathrm{a}}}
\renewcommand{\P}{\mathbb P}
\newcommand{\Pic}{\mathrm{Pic}}
\newcommand{\calO}{\mathcal{O}}
\newcommand{\calE}{\mathcal{E}}
\newcommand{\calL}{\mathcal{L}}
\newcommand{\calM}{\mathcal{M}}
\newcommand{\calN}{\mathcal{N}}
\newcommand{\calQ}{\mathcal{Q}}
\newcommand{\divisor}{\mathrm{div}}
\newcommand{\quot}{\mathrm{quot}}
\newcommand{\Isom}{\mathrm{Isom}}
\newcommand{\bfIsom}{\mathbf{Isom}}
\newcommand{\Norm}{\mathrm{Norm}}
\newcommand{\Hom}{\mathrm{Hom}}
\newcommand{\GL}{\mathrm{GL}}
 \newcommand{\id}{\mathrm{id}}
\newcommand{\lto}{\longrightarrow}
\newcommand{\diag}{\mathrm{diag}}
\newcommand{\rmH}{\mathrm{H}}
\newcommand{\rmM}{\mathrm{M}}
\newcommand{\tr}{\mathrm{tr}}
\newcommand{\rank}{\mathrm{rank}}
\renewcommand{\phi}{\varphi}
\newcommand{\Spec}{\mathrm{Spec}}
\newcommand{\sm}{\mathrm{sm}}
\newcommand{\NS}{\mathrm{NS}}
\newcommand{\de}{\mathrm{d}}
\newcommand{\red}{\mathrm{red}}
\newcommand{\rig}{\mathrm{rig}}
\newcommand{\univ}{\mathrm{univ}}
\newcommand{\Gal}{\mathrm{Gal}}
\newcommand{\unr}{\mathrm{unr}}
\newcommand{\multideg}{\mathrm{mdeg}}
\newcommand{\eps}{\varepsilon}
\newcommand{\wt}{\widetilde}
\newcommand{\into}{\hookrightarrow}
\newcommand{\onto}{\twoheadrightarrow}
\newcommand{\Sym}{\mathrm{Sym}}
\begin{document}

\title{Geometric quadratic Chabauty}
\author{Bas  Edixhoven  \&  Guido Lido \\
\small{
  \href{mailto:edix@math.leidenuniv.nl}{edix@math.leidenuniv.nl}
\quad
\href{mailto:guidomaria.lido@gmail.com}{guidomaria.lido@gmail.com}
\quad \small{Math. Inst., Universiteit Leiden}
}}

\maketitle

\begin{abstract}
  Since Faltings proved Mordell's conjecture (1983, \cite{Fal}) we know that the
  sets of rational points on curves of genus at least 2 are finite.
  Determining these sets, in individual cases, is still an unsolved
  problem. Chabau\-ty's method (1941, \cite{Chab}) is to intersect, for a prime
  number~$p$, in the $p$-adic Lie group of $p$-adic points of the
  jacobian, the closure of the Mordell-Weil group with the $p$-adic
  points of the curve. Under the condition that the Mordell-Weil rank
  is less than the genus, Chabauty's method, in combination with other
  methods such as the Mordell-Weil sieve, has been applied
  successfully to determine all rational points in many cases.

  Minhyong Kim's non-abelian Chabauty programme aims to remove the
  condition on the rank. The simplest case, called quadratic Chabauty,
  was developed by Balakrishnan, Besser, Dogra, M\"uller, Tuitman and
  Vonk, and applied in a tour de force to the so-called cursed curve
  (rank and genus both~3).

  This article aims to make the quadratic Chabauty method \emph{small}
  and \emph{geometric} again, by describing it in terms of only
  `simple algebraic geometry' (line bundles over the jacobian and
  models over the integers). 

  \noindent Keywords: rational points, curves, Chabauty, non-abelian,
  quadratic, geometric.\\
  \noindent 2010 Mathematical Subject Classification: 14G05, 11G30.
\end{abstract}

\setcounter{tocdepth}{2}
\tableofcontents

\section{Introduction}
Faltings proved in 1983, see~\cite{Fal}, that for every number field
$K$ and every curve $C$ over $K$ of genus at least~$2$, the set of
$K$-rational points $C(K)$ is finite.  However, determining $C(K)$, in
individual cases, is still an unsolved problem. For simplicity, we
restrict ourselves in this article to the case $K=\Q$.

Chabauty's method (1941) for determining $C(\Q)$ is to intersect, for
a prime number~$p$, in the $p$-adic Lie group of $p$-adic points of
the jacobian, the closure of the Mordell-Weil group with the $p$-adic
points of the curve. There is a fair amount of evidence (mainly
hyperelliptic curves of small genus, see~\cite{BBCCE}) that Chabauty's
method, in combination with other methods such as the Mordell-Weil
sieve, does determine all rational points when $r<g$, with $r$ the
Mordell-Weil rank and $g$ the genus of~$C$.

For a general introduction to Chabauty's method and Coleman's
effective version of it, we highly recommend~\cite{M-P}, and, for an
implementation of it that is `geometric' in the sense of this article,
to~\cite{Flynn}, in which equations for the curve embedded in the
Jacobian are pulled back via local parametrisations of the closure of
the Mordell-Weil group.

Minhyong Kim's non-abelian Chabauty programme aims to remove the
condition that $r<g$. The `non-abelian' refers to fundamental groups;
the fundamental group of the jacobian of a curve is the abelianised
fundamental group of the curve. The most striking result in this
direction is the so-called quadratic Chabauty method, applied
in~\cite{BDMTV}, a technical tour de force, to the so-called cursed
curve ($r=g=3$). For more details we recommend the introduction
of~\cite{BDMTV}. 

This article aims to make the quadratic Chabauty method \emph{small}
and \emph{geometric} again, by describing it in terms of only `simple
algebraic geometry' (line bundles over the jacobian, models over the
integers, and biextension structures). The main result is
Theorem~\ref{thm:finiteness}. It gives a criterion for a given list of
rational points to be complete, in terms of points with values
in~$\Z/p^2\Z$ only. Section~2 describes the geometric method in less
than 3 pages, Sections 3--5 give the necessary theory, Sections 6--7
give descriptions that are suitable for computer calculations, and
Section~\ref{sec:example} treats an example with $r=g=2$ and $14$
rational points. As explained in the remarks following
Theorem~\ref{thm:finiteness}, we expect that this approach will make
it possible to treat many more curves. Section~\ref{sec:fund_grps}
gives some remarks on the fundamental groups of the objects we
use. They are subgroups of higher dimensional Heisenberg groups, where
the commutator pairing is the intersection pairing of the first
homology group of the curve.  Section~\ref{sec:finiteness} reproves the
finiteness of $C(\Q)$, for $C$ with $r<g+\rho-1$, with $\rho$ the rank
of the $\Z$-module of symmetric endomorphisms of the jacobian
of~$C$. It also shows that a version of Theorem~\ref{thm:finiteness}
that uses higher $p$-adic precision will always give a finite upper
bound for~$C(\Q)$. Section~\ref{sec:p-adic_heights} gives, through an
appropriate choice of coordinates that split the Poincar\'e
biextension, the relation between our geometric approach and the $p$-adic
heights used in the cohomological approach. 

Already for the case of classical Chabauty (working with $J$ instead
of~$T$, and under the assumption that $r<g$), where everything is
linear, the criterion of~Theorem~\ref{thm:finiteness} can be useful;
this has been worked out and implemented in~\cite{Spelier}. We
recommend this work as a gentle introduction into the geometric
approach taken in this article. A generalisation from $\Q$ to number
fields is given in~\cite{CLXY}. For a generalisation of the
cohomological approach, see~\cite{BBBM} (quadratic Chabauty)
and~\cite{Dogra} (non-abelian Chabauty).

Although this article is about geometry, it contains no
pictures. Fortunately, many pictures can be found in~\cite{Ha}, and
some in~\cite{Ed_AWS}.

\section{Algebraic geometry}\label{sec:alg_geometry}

Let $C$ be a scheme over~$\Z$, proper, flat, regular, with $C_\Q$ of
dimension one and geometrically connected. Let $n$ be in $\Z_{\geq1}$
such that the restriction of $C$ to $\Z[1/n]$ is smooth. Let $g$ be
the genus of~$C_\Q$. We assume that $g\geq2$ and that we have a
rational point $b\in C(\Q)$; it extends uniquely to a $b\in C(\Z)$. We
let $J$ be the N\'eron model over $\Z$ of the jacobian
$\Pic^0_{C_\Q/\Q}$. We denote by $J^\vee$ the N\'eron model over $\Z$
of the dual $J_\Q^\vee$ of~$J_\Q$, and $\lambda\colon J\to J^\vee$ the
isomorphism extending the canonical principal polarisation
of~$J_\Q$. We let $P_\Q$ be the Poincar\'e \emph{line bundle} on
$J_\Q\times J_\Q^\vee$, trivialised on the union of $\{0\}\times
J_\Q^\vee$ and $J_\Q\times\{0\}$. Then the Poincar\'e \emph{torsor} is
the $\Gm$-torsor on $J_\Q\times J_\Q^\vee$ defined as
\begin{eqn}\label{eq:def_P}
P^\times_\Q = \bfIsom_{J_\Q\times J_\Q^\vee}(\calO_{J_\Q\times J_\Q^\vee},P_\Q)\,.  
\end{eqn}
For every scheme $S$ over $J_\Q\times J_\Q^\vee$, $P^\times_\Q(S)$ is the set
of isomorphisms from $\calO_S$ to $(P_\Q)_S$, with a free and
transitive action of~$\calO_S(S)^\times$. Locally on $S$ for the
Zariski topology, $(P^\times_\Q)_S$ is trivial, and $P^\times_\Q$ is represented by a
scheme over $J_\Q\times J_\Q^\vee$.

The theorem of the cube gives $P^\times_\Q$ the structure of a
\emph{biextension} of $J_\Q$ and $J_\Q^\vee$ by~$\Gm$, a notion for
the details of which we recommend Section~I.2.5 of~\cite{M-B_PVA},
Grothendieck's Expos\'es~VII and~VIII~\cite{SGA7.I}, and references
therein. This means the following. For $S$ a $\Q$-scheme, $x_1$ and
$x_2$ in~$J_\Q(S)$, and $y$ in~$J^\vee_\Q(S)$, the theorem of the cube
gives a canonical isomorphism of $\calO_S$-modules
\begin{eqn}\label{eq:P_add_1}
(x_1,y)^*P_\Q\otimes_{\calO_S} (x_2,y)^*P_\Q = (x_1+x_2,y)^*P_\Q \,.  
\end{eqn}
This induces a morphism of schemes 
\begin{eqn}\label{eq:plus_1_P}
(x_1,y)^*P^\times_\Q\times_S (x_2,y)^*P^\times_\Q \lto (x_1+x_2,y)^*P^\times_\Q \,.  
\end{eqn}
as follows. For any $S$-scheme~$T$, and $z_1$ in
$((x_1,y)^*P^\times_\Q)(T)$ and $z_2$ in $((x_2,y)^*P^\times_\Q)(T)$,
we view $z_1$ and $z_2$ as nowhere vanishing sections of the invertible
$\calO_T$-modules $(x_1,y)^*P_\Q$ and $(x_2,y)^*P_\Q$. The tensor
product of these two then gives an element of
$((x_1+x_2,y)^*P^\times_\Q)(T)$. This gives $P^\times_\Q\to
J^\vee_\Q$ the structure of a commutative group scheme, which is an
extension of $J_\Q$ by $\Gm$, over the base~$J^\vee_\Q$. We denote
this group law, and the one on $J_\Q\times J^\vee_\Q$, as
\begin{eqn}\label{eq:partial_group_law}
\begin{tikzcd}
(z_1,z_2)\arrow[r, mapsto] \arrow[d, mapsto] &  z_1 +_1 z_2  \arrow[d,
mapsto] & \\
((x_1,y),(x_2,y)) \arrow[r, mapsto] & (x_1,y)+_1(x_2,y) \arrow[r,
equals] & (x_1+x_2,y)\,. 
\end{tikzcd}
\end{eqn}
In the same way, $P^\times_\Q\to J_\Q$ has a group law $+_2$ that
makes it an extension of $J^\vee_\Q$ by $\Gm$ over the
base~$J_\Q$. In this way, $P^\times_\Q$ is both the universal
extension of $J_\Q$ by $\Gm$ and the universal extension of
$J^\vee_\Q$ by~$\Gm$. The final ingredient of the notion of
biextension is that the two partial group laws are compatible in the
following sense. For any $\Q$-scheme~$S$, for $x_1$ and $x_2$ in
$J_\Q(S)$, $y_1$ and $y_2$ in $J^\vee_\Q(S)$, and, for all $i$ and $j$
in $\{1,2\}$, $z_{i,j}$ in $((x_i,y_j)^*P^\times_\Q)(S)$, we have
\begin{eqn}\label{eq:bi_ext_compat}
\begin{tikzcd}
(z_{1,1}+_1 z_{2,1}) +_2 (z_{1,2} +_1 z_{2,2}) \arrow[r,equal]
\arrow[d,mapsto] & 
(z_{1,1}+_2 z_{1,2}) +_1 (z_{2,1} +_2 z_{2,2}) \arrow[d,mapsto] \\
(x_1+x_2,y_1) +_2 (x_1+x_2,y_2) \arrow[r,equal] & 
(x_1,y_1+y_2) +_1 (x_2, y_1+y_2) 
\end{tikzcd}
\end{eqn}
with the equality in the upper line taking place in
$((x_1+x_2,y_1+y_2)^*P^\times_\Q)(S)$.

Now we extend the geometry above over~$\Z$. We denote by $J^0$ the
fibrewise connected component of~$0$ in~$J$, which is an open subgroup
scheme of~$J$, and by $\Phi$ the quotient~$J/J^0$, which is an \'etale
(not necessarily separated) group scheme over~$\Z$, with finite
fibres, supported on~$\Z/n\Z$.  Similarly, we let $J^{\vee 0}$ be the
fibrewise connected component of~$J^\vee$.  Theorem~7.1, in
Expos\'e~VIII of \cite{SGA7.I} gives that $P^\times_\Q$ extends uniquely to a
$\Gm$-biextension
\begin{eqn}\label{eq:P_extended}
P^\times\lto J\times J^{\vee 0}
\end{eqn}
(Grothendieck's pairing on component groups is the obstruction to the
existence of such an extension). Note that in this case the existence
and the uniqueness follow directly from the requirement of extending
the rigidification on $J_\Q\times\{0\}$. For details see
Section~\ref{sec:extension_Poincare_Neron}. 

Our base point $b\in C(\Z)$ gives an embedding $j_b\colon C_\Q\to
J_\Q$, which sends, functorially in $\Q$-schemes~$S$, an element $c\in
C_\Q(S)$ to the class of the invertible $\calO_{C_S}$-module
$\calO_{C_S}(c-b)$. Then $j_b$ extends uniquely to a morphism 
\begin{eqn}\label{eq:jb_extended}
j_b\colon C^\sm \lto J
\end{eqn}
where $C^\sm$ is the open subscheme of $C$ consisting of points at
which $C$ is smooth over~$\Z$. Note that $C_\Q(\Q)=C(\Z)=C^\sm(\Z)$. 

Our next step is to lift~$j_b$, at least on certain opens of~$C^\sm$,
to a morphism to a $\Gm^{\rho-1}$-torsor over~$J$, where $\rho$ is
the rank of the free $\Z$-module $\Hom(J_\Q,J_\Q^\vee)^+$, the
$\Z$-module of self-dual morphisms from $J_\Q$ to~$J^\vee_\Q$. This
torsor will be the product of pullbacks of $P^\times$ via morphisms
\begin{eqn}\label{eq:f_and_c}
(\id,m{\cdot}\circ\tr_c\circ f)\colon J\to J\times J^{\vee 0}\,,
\end{eqn}
with $f\colon J\to J^\vee$ a morphism of group schemes,
$c\in J^\vee(\Z)$, $\tr_c$ the translation by~$c$, $m$ the least
common multiple of the exponents of all $\Phi(\Fbar_p)$ with $p$
ranging over all primes, and $m{\cdot}$ the multiplication by $m$ map
on~$J^\vee$. For such a map~$m{\cdot}\circ\tr_c\circ f$,
$j_b\colon C_\Q\to J_\Q$ can be lifted to
$(\id,m{\cdot}\circ\tr_c\circ f)^*P^\times_\Q$ if and only if
$j_b^*(\id,m{\cdot}\circ\tr_c\circ f)^*P^\times_\Q$ is trivial. The
degree of this $\Gm$-torsor on $C_\Q$ is minus the trace of
$\lambda^{-1}\circ m{\cdot}\circ (f+f^\vee)$ acting on
$\rmH_1(J(\C),\Z)$. For example, for $f=\lambda$ the degree
is~$-4mg$. Note that $j_b\colon C_\Q\to J_\Q$ induces 
\begin{eqn}\label{eq:lambda_and_jb}
j_b^*=-\lambda^{-1}\colon J^\vee_\Q\to J_\Q  \,,
\end{eqn}
(see~\cite{M-B_MP}, Propositions~2.7.9 and~2.7.10). This implies that
for $f$ such that this degree is zero, there is a unique $c$ such that
$j_b^*(\id,\tr_c\circ f)^*P^\times_\Q$ is trivial on~$C_\Q$, and hence
also its $m$th power
$j_b^*(\id,m{\cdot}\circ\tr_c\circ f)^*P^\times_\Q$.

The map 
\begin{eqn}\label{eq:id_f_pullback_P_NS}
\Hom(J_\Q,J_\Q^\vee) \lto \Pic(J_\Q) \lto \NS_{J_\Q/\Q}(\Q) = \Hom(J_\Q,J_\Q^\vee)^+
\end{eqn}
sending $f$ to the class of $(\id,f)^*P_\Q$ sends $f$ to
$f+f^\vee$, hence its kernel is $\Hom(J_\Q,J_\Q^\vee)^-$, the group of
antisymmetric morphisms. But actually, for $f$ antisymmetric, its
image in $\Pic(J_\Q)$ is already zero (see for example~\cite{Be-Ed}
and the references therein). 
Hence the image of $\Hom(J_\Q,J_\Q^\vee)$ in $\Pic(J_\Q)$ is free of
rank~$\rho$, and its subgroup of classes with degree zero on $C_\Q$ is
free of rank~$\rho{-}1$. Let $f_1,\ldots,f_{\rho-1}$ be elements of
$\Hom(J_\Q,J_\Q^\vee) $ whose images in $\Pic(J_\Q)$ form a basis of this
subgroup, and let $c_1,\ldots,c_{\rho-1}$ be the corresponding
elements of~$J^\vee(\Z)$. 

By construction, for each~$i$, the morphism $j_b\colon C_\Q\to J_\Q$
lifts to $(\id,m{\cdot}\circ\tr_{c_i}\circ f_i)^*P^\times_\Q$, unique
up to~$\Q^\times$. Now we spread this out over~$\Z$, to open
subschemes $U$ of $C^\sm$ obtained by removing, for each $q$
dividing~$n$, all but one irreducible
components of~$C^\sm_{\F_q}$, with the remaining irreducible component
geometrically irreducible. For such a $U$, the morphism $\Pic(U)\to\Pic(C_\Q)$
is an isomorphism, and $\calO_C(U)=\Z$, thus, for each~$i$, there is a lift 
\begin{eqn}\label{eq:fi_ci_P}
\begin{tikzcd}
 & (\id,m{\cdot}\circ\tr_{c_i}\circ f_i)^*P^\times \ar[d] \\
U \arrow[r,"j_b"] \arrow[ru,"\wt{j_b}"] & J
\end{tikzcd}
\end{eqn}
unique up to $\Z^\times=\{1,-1\}$.

At this point we can explain the strategy of our approach to the
quadratic Chabauty method. Let $T$ be the $\Gm^{\rho-1}$-torsor on $J$
obtained by taking the product of all
$T_i:=(\id,m{\cdot}\circ\tr_{c_i}\circ f_i)^*P^\times$:
\begin{eqn}\label{eq:def_T}
\begin{tikzcd}
  & T\arrow[d] \arrow[rrr] & & & P^{\times,\rho-1} \arrow[d] \\
  U \arrow[ru,"\wt{j_b}"] \arrow[r,"j_b"]
  & J \arrow{rrr}{(\id,m{\cdot}\circ\tr_{c_i}\circ f_i)_i} & & & J\times(J^{\vee 0})^{\rho-1}\,.
\end{tikzcd}
\end{eqn}
Then each $c\in C_\Q(\Q)=C^\sm(\Z)$ lies in one of the finitely many
$U(\Z)$'s. For each~$U$, we have a lift
$\wt{j_b}\colon U\to T$, and, for each prime number~$p$,
$\wt{j_b}(U(\Z))$ is contained in the intersection, in
$T(\Z_p)$, of $\wt{j_b}(U(\Z_p))$ and the closure $\ol{T(\Z)}$
of $T(\Z)$ in~$T(\Z_p)$ with the $p$-adic topology. Of course, one
expects this closure to be of dimension at most~$r:=\rank(J(\Q))$, and
therefore one expects this method to be successful if $r<g+\rho-1$,
the dimension of~$T(\Z_p)$. The next two sections make this strategy
precise, giving first the necessary $p$-adic formal and
analytic geometry, and then the description of $\ol{T(\Z)}$ as a
finite disjoint union of images of $\Z_p^r$ under maps constructed
from the biextension structure.

\section{From algebraic geometry to formal geometry}
\label{sec:formal_geometry}
Let $p$ be a prime number. Given $X$ a smooth scheme of relative
dimension $d$ over $\Z_p$ and $x \in X(\F_p)$ let us describe the set
$X(\Z_p)_x$ of elements of $X(\Z_p)$ whose image in $X(\F_p)$ is~$x$.
The smoothness implies that the maximal ideal of $\calO_{X,x}$ is
generated by $p$ together with $d$ other elements $t_1,\ldots, t_d$.
In this case we call $p,t_1,\ldots ,t_d$ \emph{parameters at~$x$}; if
moreover $x_l \in X(\Z_p)_x$ is a lift of~$x$ such that
$t_1(x_l)=\ldots t_d(x_l)=0$ then we say that the $t_i$'s are
\emph{parameters at~$x_l$}. The $t_i$ can be evaluated on all the
points in~$X(\Z_p)_x$, inducing a bijection
$t:=(t_1,\ldots, t_d)\colon X(\Z_p)_x \to (p\Z_p)^d$. We get a
bijection
\begin{eqn}\label{eq:param_bijection}
\tilde{t}:=(\tilde t_1,\ldots, \tilde t_d) = \left( \frac{t_1}{p},\ldots,
  \frac{t_d}{p}\right)\colon X(\Z_p)_x \overset{\sim}{\lto} \Z_p^d\,.  
\end{eqn}
This bijection can be interpreted geometrically as follows. Let
$\pi\colon\wt{X}_x\to X$ denote the blow up of $X$ in~$x$.  By
shrinking~$X$, $X$ is affine and the $t_i$ are regular on~$X$,
$t\colon X\to\A^d_{\Z_p}$ is etale, and
$t^{-1}\{0_{\F_p}\}=\{x\}$. Then $\pi\colon \wt{X}_x\to X$ is
the pull back of the blow up of $\A^d_{\Z_p}$ at the origin
over~$\F_p$. The affine open part $\wt{X}_x^p$ of
$\wt{X}_x$ where $p$ generates the image of the ideal $m_x$
of~$x$ is the pullback of the corresponding open part of the blow up
of $\A^d_{\Z_p}$, which is the multiplication by~$p$ morphism
$\A^d_{\Z_p}\to \A^d_{\Z_p}$ that corresponds to
$\Z_p[t_1,\ldots,t_d]\to \Z_p[\tilde{t}_1,\ldots,\tilde{t}_d]$ with
$t_i\mapsto p\tilde{t}_i$. It follows that the $p$-adic completion
$\calO(\wt{X}_x^p)^{\wedge_p}$ of $\calO(\wt{X}_x^p)$ is
the $p$-adic completion
$\Z_p\langle\tilde{t}_1,\ldots,\tilde{t}_d\rangle$ of
$\Z_p[\tilde{t}_1,\ldots,\tilde{t}_d]$. Explicitly, we have
\begin{eqn}\label{eq:int_conv_power_series}
\Z_p\langle\tilde{t}_1,\ldots,\tilde{t}_d\rangle= \left\{ \sum_{ I \in
    \N^d} a_I \tilde{t}^I\in\Z_p[[\tilde{t}_1,\ldots,\tilde{t}_d]] :
  \forall n\geq0, \, \text{ $\forall^{\text{almost}}I$},\, v_p(a_I)
  \geq n \right\} \,.
\end{eqn}
With these definitions, we have 
\begin{eqn}\label{eq:res_disk_power_series}
\begin{aligned}
 X(\Z_p)_x & = \wt{X}_x^p(\Z_p) =
\Hom(\Z_p\langle\tilde{t}_1,\ldots,\tilde{t}_d\rangle,\Z_p) =
\A^d(\Z_p)\,,\\
(\wt{X}_x^p)_{\F_p} & = \Spec(\F_p[\tilde{t}_1,\ldots,\tilde{t}_d]).
\end{aligned}
\end{eqn}
The affine space $(\wt{X}_x^p)_{\F_p}$ is canonically a torsor
under the tangent space of $X_{\F_p}$ at~$x$.

This construction is functorial. Let $Y$ be a smooth $\Z_p$-scheme,
$f\colon X\to Y$ a morphism over~$\Z_p$, and $y:=f(x)\in
Y(\F_p)$. Then the ideal in $\calO_{\wt{X}_x^p}$ generated by
the image of $m_{f(x)}$ is generated by~$p$. That gives us a morphism
$\wt{X}_x^p\to \wt{Y}_{f(x)}^p$, and then a morphism
from $\calO(\wt{Y}_{f(x)}^p)^{\wedge_p}$ to
$\calO(\wt{X}_x^p)^{\wedge_p}$. Reduction mod~$p$ then gives a
morphism
$(\wt{X}_x^p)_{\F_p}\to (\wt{Y}_{f(x)}^p)_{\F_p}$, the
tangent map of $f$ at~$x$, up to a translation.

If this tangent map is injective, and $d_x$ and $d_y$ denote the
dimensions of $X_{\F_p}$ at~$x$ and of $Y_{\F_p}$ at~$y$, then there
are $t_1,\ldots,t_{d_y}$ in $\calO_{Y,y}$ such that
$p,t_1,\ldots,t_{d_y}$ are parameters at~$y$, and such that
$t_{d_x+1},\ldots,t_{d_y}$ generate the kernel of
$\calO_{Y,y}\to\calO_{X,x}$. Then the images in~$\calO_{X,x}$ of
$p,t_1,\ldots,t_{d_x}$ are parameters at~$x$, and
$\calO(\wt{Y}_{f(x)}^p)^{\wedge_p}\to\calO(\wt{X}_x^p)^{\wedge_p}$
is
$\Z_p\langle\tilde{t}_1,\ldots,\tilde{t}_{d_y}\rangle\to
\Z_p\langle\tilde{t}_1,\ldots,\tilde{t}_{d_x}\rangle$, with kernel
generated by $\tilde{t}_{d_x+1},\ldots,\tilde{t}_{d_y}$.

\section{Integral points, closure and finiteness}
\label{sec:closure-finiteness}
Let us now return to our original problem. The notation $U$, $J$, $T$,
$j_b$, $\wt{j_b}$, $r$, $\rho$ etc., is as at the end of
Section~\ref{sec:alg_geometry}. We assume moreover that $p$ does not
divide~$n$ ($n$ as in the start of Section~\ref{sec:alg_geometry}) and
that $p>2$ (for $p=2$ everything that follows can probably 
be adapted by working with residue polydiscs modulo~$4$). 

Let $u$ be in $U(\F_p)$, and $t:=\wt{j_b}(u)$.  We want a
description of the closure $\ol{T(\Z)_t}$ of~$T(\Z)_t$
in~$T(\Z_p)_t$. Using the biextension structure of~$P^\times$, we will
produce, for each element of~$J(\Z)_{j_b(u)}$, an element of $T(\Z)$
over it. Not all of these points are in~$T(\Z)_t$, but we will then
produce a subset of~$T(\Z)_t$ whose closure is~$\ol{T(\Z)_t}$.

If $T(\Z)_t$ is empty then $\ol{T(\Z)_t}$ is empty, too. So we assume
that we have an element $\wt{t} \in T(\Z)_t$ and we define
$x_{\wt t} \in J(\Z)$ to be the projection of~$\wt t$.
Let $f=(f_1,\ldots, f_{\rho-1})\colon J\to J^{\vee,\rho-1}$, let
$c = (c_1,\ldots,c_{\rho-1}) \in J^{\vee,\rho-1}(\Z)$.  We denote by
$P^{\times,\rho-1}$ the product over $J\times (J^{\vee 0})^{\rho -1}$
of the $\rho{-}1$ $\Gm$-torsors obtained by pullback of $P^\times$ via
the projections to $J\times J^{\vee 0}$; it is a biextension of $J$
and $(J^{\vee 0})^{\rho -1}$ by $\Gm^{\rho-1}$, and
$T=(\id,m{\cdot}\circ\tr_c\circ f)^*P^{\times,\rho-1}$. We choose a
basis $x_1,\ldots, x_r$ of the free $\Z$-module~$J(\Z)_0$, the kernel
of $J(\Z)\to J(\F_p)$.  For each $i,j \in \{1,\ldots, r \}$ we choose
$P_{i,j}$, $R_{i,\wt{t}}$, and $S_{\wt{t},j}$ in
$P^{\times,\rho-1}(\Z)$ whose images in
$(J\times (J^{\vee 0 })^{\rho -1})(\Z)$ are $(x_i,f(mx_j))$,
$(x_i, (m{\cdot} \circ \tr_c \circ f)(x_{\wt t}))$
and~$( x_{\wt t},f(mx_j))$:
\begin{eqn}\label{eq:Pij}
\begin{tikzcd}
  P_{i,j} \arrow[d, mapsto] &  R_{i,\wt t} \arrow[d, mapsto]
  & S_{\wt t,j} \arrow[d, mapsto] & P^{\times,\rho-1} \arrow[d] \\
  (x_i,f(mx_j))
  & (x_i, (m{\cdot} \circ \tr_c \circ f)(x_{\wt{t}})) &
  ( x_{\wt t},f(mx_j)) & J\times (J^{\vee 0 })^{\rho -1}\,.
\end{tikzcd}
\end{eqn}
For each such choice there are $2^{\rho-1}$ possibilities.

For each $n \in \Z^r$ we use the biextension structure on
$P^{\times,\rho-1}\to J \times (J^{\vee 0})^{\rho-1}$ to define the
following points in~$P^{\times,\rho-1}(\Z)$, with specified images
in $(J \times (J^{\vee 0})^{\rho-1})(\Z)$:
\begin{eqn}\label{eq:A_and_B}
\begin{tikzcd} 
\displaystyle  A_{\wt t}(n)=\sideset{}{_2}\sum_{j=1}^r n_j
\cdot_2 S_{\wt t, j}\, \arrow[d, mapsto] & \displaystyle
B_{\wt t}(n) = \sideset{}{_1}\sum_{i=1}^r n_i \cdot_1
R_{i,\wt t} \, \arrow[d, mapsto]  \\
\displaystyle \left( x_{\wt t}\,,  \sum_{i=1}^r n_i f(mx_i)\right)
&  \displaystyle  \left( \sum_{i=1}^r n_i x_i\,,  (m{\cdot}\circ\tr_c
\circ f)(x_{\wt t}) \right)
\end{tikzcd}
\end{eqn}
\begin{eqn}\label{eq:C}
\begin{tikzcd} 
\displaystyle C (n) = \sideset{}{_1}\sum_{i=1}^r n_i \cdot_1 \left(
\sideset{}{_2}\sum_{j=1}^r n_j \cdot_2 P_{i,j} \right) \, \arrow[d,
mapsto]
\\
\displaystyle \left( \sum_{i=1}^rn_i x_i\,,
\sum_{i=1}^r n_i  f(mx_i) \right)
\end{tikzcd}
\end{eqn}
where $\sum_1$ and $\cdot_1$ denote iterations of the first partial
group law~$+_1$ as in~(\ref{eq:partial_group_law}), and analogously
for the second group law.  We define, for all $n\in\Z^r$, 
\begin{eqn}\label{eq:D_ttilde_n}
D_{\wt{t}}(n) := \left(C(n)+_2 B_{\wt
  t}(n)\right) +_1 \left(A_{\wt t}(n) +_2 \wt{t} \right) \,
\in \, P^{\times,\rho-1}(\Z) \,,
\end{eqn}
which is mapped to 
\begin{eqn}\label{eq:D_ttilde_n_J}
\left( x_{\wt t} + \sum_{i=1}^r n_i x_i, (m{\cdot}\circ \tr_c \circ
  f) \left(x_{\wt t} + \sum_{i=1}^r n_i x_i \right)\right)
\in \left(J\times (J^{\vee 0})^{\rho-1}\right)(\Z)\,.
\end{eqn}
Hence $D_{\wt{t}}(n) $ is in~$T(\Z)$, and its image in
$J(\F_p)$ is~$j_b(u)$. We do not know its image in~$T(\F_p)$.

We claim that for $n$ in $(p{-}1)\Z^r$, $D_{\wt{t}}(n)$ is
in~$T(\Z)_t$. Let $n'$ be in $\Z^r$ and let $n=(p{-}1)n'$. Then, in
the trivial
$\F_p^{\times,\rho-1}$-torsor~$P^{\times,\rho-1}(j_b(u),0)$,
on which~$+_2$ is the group law, we have:
\begin{eqn}\label{eq:Atilde_n_is_1}
  A_{\wt t}(n) = (p{-}1){\cdot}_2 A_{\wt
    t}(n')=1\quad\text{in $\F_p^{\times,\rho-1}.$}
\end{eqn}
Similarly, in
$P^{\times,\rho-1}(0,(m{\cdot} \circ \tr_c \circ f)(j_b(u)))
= \F_p^{\times,\rho-1}$, we have 
$B_{\wt t}(n)=1$, and, similarly, in
$P^{\times,\rho-1}(0,0) = \F_p^{\times,\rho-1}$, we have
$C(n) =1$.  So, with apologies for the mix of additive
and multiplicative notations, in $P^{\times,\rho-1}(\F_p)$ we have
\begin{eqn}\label{eq:Dtilde_n_Fp}
D_{\wt t}(n) = \left(1 +_2 1\right) +_1 \left(1 +_2 t \right) =  t\,,
\end{eqn}
mapping to the following element in~$(J\times J^{\vee0,\rho-1})(\F_p)$:
\begin{eqn}\label{eq:Dtilde_n_Fp_J}
\begin{aligned}
& \left((0,0) +_2 ((0,(m{\cdot} \circ \tr_c \circ f)(j_b(u))))\right)
+_1\left((j_b(u),0) +_2 (j_b(u), (m{\cdot} \circ \tr_c \circ
  f)(j_b(u)))\right)\\
& = (j_b(u), (m{\cdot} \circ \tr_c \circ f)(j_b(u)))\,.
\end{aligned}
\end{eqn}
We have proved our claim that $D_{\wt t}(n) \in T(\Z)_t$.

So we now have the map
\begin{eqn}\label{eq:kappa_Z}
\kappa_\Z\colon \Z^r\to T(\Z)_t,\quad n\mapsto D_{\wt t}((p-1)n)\,.
\end{eqn}
The following theorem will be proved in Section~\ref{SectionBiextensionZp}.
\begin{Theorem}\label{thm:p-adic_closure} 
Let $x_1,\ldots,x_g$ be in~$\calO_{J,j_b(u)}$ such that together
with~$p$ they form a system of parameters of~$\calO_{J,j_b(u)}$, and
let $v_1,\ldots,v_{\rho-1}$ be in~$\calO_{T,t}$ such that
$p,x_1,\ldots,x_g,v_1,\ldots,v_{\rho-1}$ are parameters
of~$\calO_{T,t}$. As in Section~\ref{sec:formal_geometry} these
parameters, divided by~$p$, give a bijection
\begin{subeqn}\label{eq:T_to_Zp}
T(\Z_p)_t \lto \Z_p^{g+\rho-1}\,.
\end{subeqn}
The composition of $\kappa_\Z$ with the map~(\ref{eq:T_to_Zp}) is
given by uniquely determined $\kappa_1,\ldots,\kappa_{g+\rho-1}$
in~$\calO(\A_{\Z_p}^r)^{\wedge_p}=\Z_p\langle z_1,\ldots,z_r\rangle$.
The images in $\F_p[z_1,\ldots,z_r]$ of $\kappa_1,\ldots,\kappa_g$ are
of degree at most~$1$, and the images of
$\kappa_{g+1},\ldots,\kappa_{g+\rho-1}$ are of degree at most~$2$.
The map $\kappa_\Z$ extends uniquely to the continuous map
\begin{subeqn}\label{eq:kappa}
  \kappa=(\kappa_1,\ldots,\kappa_{g+\rho-1})\colon
  \A^r(\Z_p) = \Z_p^r  \lto T(\Z_p)_t\,.   
\end{subeqn}
and the image of~$\kappa$ is~$\ol{T(\Z)_t}$.
\end{Theorem}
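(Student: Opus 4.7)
The plan is to pass to $p$-adic formal geometry, where commutative formal groups over $\Z_p$ admit Tate-analytic scalar multiplication via $\exp/\log$ and the biextension's partial group laws are bilinear, and use this to express $\kappa_\Z$ as a Tate power series, read off the mod-$p$ degree bounds, and identify the image.

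Using Section~\ref{sec:formal_geometry}, the chosen parameters identify $J(\Z_p)_{j_b(u)}$ with $\Z_p^g$ and $T(\Z_p)_t$ with $\Z_p^{g+\rho-1}$; translation by $j_b(u)$ and $\wt t$ moves us to the origins of the relevant formal groups, where the formal group law on $J$ and the restrictions of $+_1,+_2$ to the appropriate formal fibers of $P^{\times,\rho-1}$ are given by convergent power series over $\Z_p$. Scalar multiplication $n\mapsto n\cdot\xi$ on any commutative formal group over $\Z_p$ extends uniquely to a Tate-analytic map $\Z_p\to(\text{formal group})$. Applied to each of $A_{\wt t}((p{-}1)n)$, $B_{\wt t}((p{-}1)n)$, $C((p{-}1)n)$ and then assembled via $+_1,+_2$ and the translation by $\wt t$, this expresses $D_{\wt t}((p{-}1)n)\in T(\Z_p)_t$ in coordinates as $\kappa_1,\ldots,\kappa_{g+\rho-1}\in\Z_p\langle z_1,\ldots,z_r\rangle$, and hence gives the unique continuous extension $\kappa$ at once.

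For the mod-$p$ degrees, the formal group law of $J$ has shape $x\oplus y=x+y+O(2)$, and since residue-disk parameters take values in $p\Z_p$, the $O(2)$ terms lie in $p^2\Z_p$ and vanish mod $p$ after division by $p$; so $[n]$-multiplication in the formal group is linear in $n$ mod $p$, which yields the degree-$1$ bound for $\kappa_1,\ldots,\kappa_g$ (affine after accounting for the $\wt t$-translation). For the fiber coordinates, the bilinearity of the biextension encoded in~(\ref{eq:bi_ext_compat}) makes the formal biextension locally a $\hat{\mathbb G}_m^{\rho-1}$-extension with bilinear cocycle on tangents, so the doubly-iterated $C(n)$ contributes $\sum_{i,j}n_in_j\pi_{i,j}$ in the fiber direction modulo terms of degree $\ge3$ that vanish mod $p$; $A_{\wt t}$ and $B_{\wt t}$ contribute linearly and $\wt t$ is constant, giving the degree-$2$ bound for $\kappa_{g+1},\ldots,\kappa_{g+\rho-1}$.

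For the image, $\kappa(\Z_p^r)\subseteq\overline{T(\Z)_t}$ is immediate from $\kappa_\Z(\Z^r)\subseteq T(\Z)_t$, continuity of $\kappa$, and density of $\Z^r$ in $\Z_p^r$. For the reverse, the key is that two lifts of the same $J$-point in $T(\Z)$ differ by $\Gm^{\rho-1}(\Z)=\{\pm1\}^{\rho-1}$, which for $p>2$ injects into $\F_p^{\times,\rho-1}$, so $T(\Z)_t\to J(\Z)_{j_b(u)}$ is injective. Given $s\in T(\Z)_t$, write $x_s-x_{\wt t}=\sum n_ix_i$ with $n\in\Z^r$, set $n':=n/(p{-}1)\in\Z_p^r$ (legal since $p{-}1\in\Z_p^\times$), and pick $n_k\in\Z^r$ with $n_k\to n'$; then $(p{-}1)n_k\to n$ in $\Z_p^r$, the $J$-projection of $\kappa_\Z(n_k)\in T(\Z)_t$ converges to $x_s$, and the injectivity of $T(\Z)_t\to J(\Z)_{j_b(u)}$ together with analytic continuity of a local section of $T\to J$ through $\wt t$ forces $\kappa_\Z(n_k)\to s$ in $T(\Z_p)_t$, so $s\in\kappa(\Z_p^r)$. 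The main obstacle will be this last step: upgrading convergence of the $J$-projections to convergence in the $\Gm^{\rho-1}$-fiber direction requires careful use of the biextension compatibility~(\ref{eq:bi_ext_compat}) and the $\{\pm1\}\hookrightarrow\F_p^\times$ uniqueness (whence the $p>2$ hypothesis) to pin down the fiber coordinate of $T(\Z)_t$-points from their $J$-projection.
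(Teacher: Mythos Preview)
Your treatment of the extension of~$\kappa_\Z$ to Tate power series and of the mod-$p$ degree bounds is essentially the paper's argument, organised slightly differently: the paper replaces each $P_{i,j}$, $R_{i,\wt t}$, $S_{\wt t,j}$ by its unique $\F_p^{\times,\rho-1}$-translate $P'_{i,j}$, $R'_{i,\wt t}$, $S'_{\wt t,j}$ lying in the residue polydisk of~$1$, and works with the resulting map $D'\colon \Z^r\to T(\Z_p)_t$, whereas you achieve the same effect by pre-multiplying the argument by~$p{-}1$.  Both approaches then feed into the same $\exp/\log$ machinery to get convergent power series, and the bilinearity argument for the degree bounds is the same in substance.

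The genuine gap is exactly where you flag it: the inclusion $T(\Z)_t\subset\kappa(\Z_p^r)$.  Your proposed argument does not work. You take $s\in T(\Z)_t$, find $n\in\Z^r$ with $x_s=x_{\wt t}+\sum n_ix_i$, set $n'=n/(p{-}1)\in\Z_p^r$, and want $\kappa(n')=s$.  You know that $\kappa(n')$ and $s$ have the same image in~$J(\Z_p)$, but the fibre of $T(\Z_p)_t\to J(\Z_p)_{j_b(u)}$ over that point is a full $(1+p\Z_p)^{\rho-1}$, and neither the injectivity of $T(\Z)_t\to J(\Z)_{j_b(u)}$ (a statement about the discrete set $T(\Z)_t$, while $\kappa(n')$ lies only in $T(\Z_p)_t$) nor any ``analytic section of $T\to J$ through~$\wt t$'' (there is no canonical one, and a non-canonical choice gives no relation between $s$ and~$\kappa(n')$) pins down the fibre coordinate.

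The paper resolves this not by analysing the fibre but by a sandwich.  The normalised map $D'$ satisfies $\kappa_\Z(n)=D'((p{-}1)n)$, so $\kappa$ and the continuous extension of~$D'$ have the \emph{same} image $D'(\Z_p^r)$ on~$\Z_p^r$ (since $(p{-}1)\Z_p^r=\Z_p^r$).  On the other hand, for $s\in T(\Z)_t$ one has $s=\eps\cdot D_{\wt t}(n)$ with $\eps\in\{\pm1\}^{\rho-1}$; since $D'(n)=\xi(n)\cdot D_{\wt t}(n)$ with $\xi(n)\in\F_p^{\times,\rho-1}$ the Teichm\"uller lift, and both $s$ and $D'(n)$ reduce to~$t$, one gets $\eps=\xi(n)$ in $\Z_p^{\times,\rho-1}$ and hence $s=D'(n)$.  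This gives $\kappa_\Z(\Z^r)\subset T(\Z)_t\subset D'(\Z^r)$, and taking closures yields $\ol{T(\Z)_t}=D'(\Z_p^r)=\kappa(\Z_p^r)$.  The point is that $D'$ is defined on all of~$\Z^r$ (not just $(p{-}1)\Z^r$) with values in the single residue polydisk~$T(\Z_p)_t$, which is exactly what your direct approach via $D_{\wt t}((p{-}1)\,\cdot\,)$ lacks.
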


Now the moment has come to confront $U(\Z_p)_u$
with~$\ol{T(\Z)_t}$. We have $\wt{j_b}\colon U\to T$, whose tangent
map (mod~$p$) at $u$ is injective (here we use that $C_{\F_p}$ is
smooth over~$\F_p$).  Then, as at the end
of~Section~\ref{sec:formal_geometry},
$\wt{j_b}\colon \wt{U}_u^p\to \wt{T}_t^p$ is, after reduction mod~$p$,
an affine linear embedding of codimension~$g{+}\rho{-}2$,
$\wt{j_b}^*\colon \calO(\wt{T}_t^p)^{\wedge_p}\to
\calO(\wt{U}_u^p)^{\wedge_p}$ is surjective and its kernel is
generated by elements $f_1,\ldots,f_{g+\rho-2}$ (we apologise for
using the same letter as for the components of
$f\colon J\to J^{\vee,\rho-1}$), whose images in
$\F_p\otimes\calO(\wt{T}_t^p)$ are of degree at most~$1$, and such
that $f_1,\ldots,f_{g-1}$ are
in~$\calO(\wt{J}_{j_b(u)}^p)^{\wedge_p}$.  The pullbacks $\kappa^*f_i$
are in $\Z_p\langle z_1,\ldots,z_r\rangle$; let $I$ be the ideal in
$\Z_p\langle z_1,\ldots,z_r\rangle$ generated by them, and let
\begin{eqn}\label{eq:def_A}
A:=\Z_p\langle z_1,\ldots,z_r\rangle/I \,.
\end{eqn}
Then the elements of
$\Z_p^r$ whose image is in $U(\Z_p)_u$ are zeros of~$I$, hence
morphisms of rings from $A$ to~$\Z_p$, and hence from the reduced
quotient $A_\red$ to~$\Z_p$.

\begin{Theorem}\label{thm:finiteness}
For $i\in\{1,\ldots,g{+}\rho{-}2\}$, let $\kappa^*\ol{f_i}$ be the
image of $\kappa^*f_i$ in $\F_p[z_1,\ldots,z_r]$, and let $\ol{I}$
be the ideal of $\F_p[z_1,\ldots,z_r]$ generated by them. Then
$\kappa^*\ol{f_1},\ldots, \kappa^*\ol{f_{g-1}}$ are of degree at
most~$1$, and $\kappa^*\ol{f_g},\ldots, \kappa^*\ol{f_{g+\rho-2}}$
are of degree at most~$2$. Assume that
$\ol{A}:=A/pA=\F_p[z_1,\ldots,z_r]/\ol{I}$ is finite. Then $\ol{A}$
is the product of its localisations $\ol{A}_m$ at its finitely many
maximal ideals~$m$.  The sum of the $\dim_{\F_p}\ol{A}_m$ over the
$m$ such that $\ol{A}/m=\F_p$ is an upper bound for the number of
elements of $\Z_p^r$ whose image under $\kappa$ is in~$U(\Z_p)_u$,
and also an upper bound for the number of elements of $U(\Z)$ with
image $u$ in~$U(\F_p)$.
\end{Theorem}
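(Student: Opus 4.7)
The plan proceeds in three steps: verify the degree bounds on the $\kappa^*\ol{f_i}$, show that $A$ is finite as a $\Z_p$-module, and prove a counting estimate for $\Z_p$-points of a finite local $\Z_p$-algebra. For the degree bounds, observe that each $\ol{f_i}$ is a polynomial of degree at most~$1$ in the reduced parameters $(\tilde{x}_j, \tilde{v}_k)$ of $\wt{T}_t^p$, and for $i \leq g-1$ it involves only the $\tilde{x}_j$. By Theorem~\ref{thm:p-adic_closure}, $\ol{\kappa_j}$ has degree at most~$1$ for $j \leq g$ and at most~$2$ for $g < j \leq g+\rho-1$, so substituting gives $\deg \kappa^*\ol{f_i} \leq 1$ for $i \leq g-1$ and $\deg \kappa^*\ol{f_i} \leq 2$ otherwise.

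For the finiteness of $A$, note that $A$ is $p$-adically complete as a quotient of the Tate algebra $\Z_p\langle z_1, \ldots, z_r\rangle$. Lifting an $\F_p$-basis of $\ol{A}$ to $A$ and applying the topological Nakayama lemma shows that these lifts generate $A$ as a $\Z_p$-module, so $A$ is a finite $\Z_p$-algebra. Being finite over the complete local ring $\Z_p$, $A$ decomposes as a product $\prod_m A_m$ of complete local $\Z_p$-algebras indexed by the maximal ideals of $\ol{A}$, with $A_m/pA_m = \ol{A}_m$; reducing mod~$p$ gives the Artinian decomposition $\ol{A} = \prod_m \ol{A}_m$ claimed in the theorem.

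An element $n \in \Z_p^r$ with $\kappa(n) \in U(\Z_p)_u$ is, by definition of $I$ and $A$, exactly a $\Z_p$-algebra homomorphism $A \to \Z_p$, and such a homomorphism factors through a unique local factor $A_m$ with residue field $\ol{A}/m = \F_p$ (since the residue field must embed into $\F_p$). The core estimate is thus $|\Hom_{\Z_p}(A_m, \Z_p)| \leq \dim_{\F_p}\ol{A}_m$ for each such~$m$. To prove it, enumerate the homomorphisms as $\phi_1, \ldots, \phi_N$ and set $R := A_m / \bigcap_i \ker\phi_i$; by construction $(\phi_i)_i \colon R \into \Z_p^N$ is injective, so $R$ is $\Z_p$-torsion-free and $R \otimes \Q_p$ carries $N$ distinct $\Q_p$-algebra maps to $\Q_p$. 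Together with $R \otimes \Q_p \subseteq \Q_p^N$ this forces $R \otimes \Q_p = \Q_p^N$ and $R$ to be free of rank~$N$ over $\Z_p$, so $\dim_{\F_p}(R/pR) = N$; the surjection $\ol{A}_m \onto R/pR$ then gives $N \leq \dim_{\F_p}\ol{A}_m$. Summing over $m$ with $\ol{A}/m = \F_p$ bounds the number of such~$n$. For the final assertion, $\wt{j_b}$ sends $U(\Z)_u$ injectively into $T(\Z)_t \subseteq \ol{T(\Z)_t} = \kappa(\Z_p^r)$, so each $c \in U(\Z)_u$ admits some $n_c \in \Z_p^r$ with $\kappa(n_c) = \wt{j_b}(c) \in U(\Z_p)_u$; distinct $c$ give distinct $\wt{j_b}(c)$ and hence distinct $n_c$, so $|U(\Z)_u|$ is bounded by the same sum.

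The principal obstacle is the dimension estimate in step three: extracting a free $\Z_p$-module of rank~$N$ from the mere existence of $N$ distinct $\Z_p$-homomorphisms, which succeeds only thanks to the torsion-freeness coming from the embedding $R \into \Z_p^N$. The other two steps are routine: the degree check is a substitution, and the finiteness of $A$ is standard for $p$-adically complete algebras with finite reduction.
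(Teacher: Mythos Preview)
Your proof is correct and follows essentially the same three-part structure as the paper: degree bounds via substitution, finiteness of $A$ from $p$-adic completeness plus finite reduction, and a counting argument via the product decomposition $A = \prod_m A_m$. The only notable difference is in the counting step: the paper simply asserts that $\Hom(A_m,\Z_p)$ has at most $\rank_{\Z_p}(A_m)$ elements (implicitly by linear independence of characters after tensoring with~$\Q_p$), whereas you pass through the torsion-free quotient $R = A_m/\bigcap_i\ker\phi_i$ and argue that $R$ is free of rank~$N$. Both routes rest on the same independence-of-characters fact over~$\Q_p$; yours makes the mechanism more explicit, the paper's is terser but yields the slightly sharper intermediate bound $\rank_{\Z_p}(A_m) \leq \dim_{\F_p}\ol{A}_m$.
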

\begin{proof}
As every $\ol{f_i}$ is of degree at most $1$ in
$x_1,\ldots,x_g,v_1,\ldots,v_{\rho-1}$, every $\kappa^*\ol{f_i}$ is an
$\F_p$-linear combination of $\kappa_1,\ldots,\kappa_{g+\rho-1}$,
hence of degree at most~$2$. For $i<g$, $\ol{f_i}$ is a linear
combination of $x_1,\ldots,x_g$, and therefore $\kappa^*\ol{f_i}$ is
of degree at most~$1$.

We claim that $A$ is $p$-adically complete. More generally, let $R$ be
a noetherian ring that is $J$-adically complete for an ideal~$J$, and
let $I$ be an ideal in~$R$. The map from $R/I$ to its $J$-adic
completion $(R/I)^\wedge$ is injective (\cite[Thm.10.17]{At-McD}).  As
$J$-adic completion is exact on finitely generated $R$-modules
(\cite[Prop.10.12]{At-McD}), it sends the surjection $R\to R/I$ to a
surjection $R=R^\wedge\to (R/I)^\wedge$ (see~\cite[Prop.10.5]{At-McD}
for the equality $R=R^\wedge$).  It follows that
$R/I \to (R/I)^\wedge$ is surjective.

Now we assume that $\ol{A}$ is finite. As $A$ is $p$-adically
complete, $A$ is the limit of the system of its quotients by powers
of~$p$. These quotients are finite: for every $m\in\Z_{\geq1}$,
$A/p^{m+1}A$ is, as abelian group, an extension of $A/pA$ by a
quotient of~$A/p^mA$. As $\Z_p$-module, $A$ is generated by any lift
of an $\F_p$-basis of~$\ol{A}$. Hence $A$ is finitely generated as
$\Z_p$-module.

The set of elements of $\Z_p^r$ whose image under $\kappa$ is
in~$U(\Z_p)$ is in bijection with the set of $\Z_p$-algebra morphisms
$\Hom(A,\Z_p)$. As $A$ is the product of its localisations~$A_m$ at
its maximal ideals, $\Hom(A,\Z_p)$ is the disjoint union of the
$\Hom(A_m,\Z_p)$. For each~$m$, $\Hom(A_m,\Z_p)$ has at most
$\rank_{\Z_p}(A_m)$ elements, and is empty if $\F_p\to A/m$ is not an
isomorphism. This establishes the upper bound for the number of
elements of $\Z_p^r$ whose image under $\kappa$ is in~$U(\Z_p)$.  By
Theorem~\ref{thm:p-adic_closure}, the elements of $U(\Z)$ with image
$u$ in $U(\F_p)$ are in $\ol{T(\Z)_t}$, and therefore of the form
$\kappa(x)$ with $x\in\Z_p^r$ such that $\kappa(x)$ is
in~$U(\Z_p)_u$. This establishes the upper bound for the number of
elements of $U(\Z)$ with image $u$ in~$U(\F_p)$.
\end{proof}

We include some remarks to explain how Theorem~\ref{thm:finiteness}
can be used, and what we hope that it can do. 

\begin{Remark}
The $\kappa^*\ol{f_i}$ in Theorem~\ref{thm:finiteness} can be computed
from the reduction $\F_p^r\to T(\Z/p^2\Z)$ of~$\kappa_\Z$ and (to get
the~$\ol{f_i}$) from $\wt{j_b}\colon U(\Z/p^2\Z)_u\to
T(\Z/p^2\Z)_t$. For this, one does not need to treat $T$ and $J$ as schemes,
one just computes with $\Z/p^2\Z$-valued points. Now assume that
$r\leq g+\rho-2$. If, for some prime~$p$, the criterion in
Theorem~\ref{thm:finiteness} fails (that is, $\ol{A}$ is not finite),
then one can try the next prime. We hope (but also
expect) that one quickly finds a prime $p$ such that $\ol{A}$ is
finite for every $U$ and for every $u$ in $U(\F_p)$ such that
$\wt{j_b}(u)$ is in the image of $T(\Z)\to T(\F_p)$. By the
way, note that our notation in Theorem~\ref{thm:finiteness} does not
show the dependence on~$U$ and $u$ of $\wt{j_b}$, $\kappa_\Z$,
$\kappa$ and the~$\ol{f_i}$. Instead of varying $p$, one could also
increase the $p$-adic precision, and then the result of
Section~\ref{sec:finiteness} proves that one gets an upper bound for
the number of elements of~$U(\Z)$.
\end{Remark}

\begin{Remark}
  If $r < g + \rho -2$ then we think that it is likely (when
  varying~$p$), for dimension reasons, unless something special
  happens as in~\cite{BBCCE} or Remark~8.9 of~\cite{Ba-Do-1}, that, for all
  $u\in U(\F_p)$, the upper bound in Theorem~\ref{thm:finiteness} for
  the number of elements of $U(\Z)$ with image $u$ in~$U(\F_p)$ is
  sharp. For a precise conjecture in the context of Chabauty's method,
  see the ``Strong Chabauty'' Conjecture in~\cite{Stoll}. 
\end{Remark}

\begin{Remark}
Suppose that $r = g + \rho -2$. Then we expect, for dimension
reasons, that it is likely (when varying~$p$) that, for some
$u\in U(\F_p)$, the upper bound in Theorem~\ref{thm:finiteness} for the number of
elements of $U(\Z)$ with image $u$ in~$U(\F_p)$ is not sharp. Then, as
in the classical Chabauty method, one must combine the information
gotten from several primes, analogous to `Mordell-Weil
sieving'; see~\cite{Muller}. In our situation, this amounts to the
following. Suppose that we are given a subset $B$ of~$U(\Z)$ that we
want to prove to be equal to~$U(\Z)$. Let $B'$ be the complement in
$U(\Z)$ of~$B$. For
every prime $p>2$ not dividing~$n$, Theorem~\ref{thm:finiteness}
gives, interpreting~$\ol{A}$ as in the end of the proof of
Theorem~\ref{thm:finiteness}, 
a subset $O_p$ of~$J(\Z)$, with $O_p$ a union of cosets for the subgroup
$p{\cdot}\ker(J(\Z)\to J(\F_p))$, that contains~$j_b(B')$. Then one
hopes that, taking a large enough finite set $S$ of primes, the
intersection of the $O_p$ for $p$ in~$S$ is empty.
\end{Remark}

\section{Parametrisation of integral points, and power series}
\label{SectionBiextensionZp}

In this section we give a proof of Theorem~\ref{thm:p-adic_closure}.
The main tools here are the formal logarithm and formal exponential of
a commutative smooth group scheme over a $\Q$-algebra (\cite{Hon},
Theorem~1): they give us identities like
$n{\cdot}g = \exp(n{\cdot}\log g)$ that allow us to extend the
multiplication to elements $n$ of~$\Z_p$.

The evaluation map from $\Z_p\langle z_1,\ldots,z_n\rangle$ to the set
of maps $\Z_p^n\to\Z_p$ is injective (induction on~$n$, non-zero
elements of $\Z_p\langle z\rangle$ have only finitely many zeros
in~$\Z_p$).

We say that a map $f\colon\Z_p^n\to\Z_p^m$ is \emph{given by integral
  convergent power series} if its coordinate functions are in
$\Z_p\langle z_1,\ldots,z_n\rangle =
\calO(\A^n_{\Z_p})^{\wedge_p}$. This property is stable under
composition: composition of polynomials over $\Z/p^k\Z$ gives
polynomials.

\subsection{Logarithm and exponential}
Let $p$ be a prime number, and let $G$ be a commutative group scheme,
smooth of relative dimension~$d$ over a scheme~$S$ smooth over~$\Z_p$,
with unit section $e$ in~$G(S)$.  For any $s$ in~$S(\F_p)$,
$G(\Z_p)_{e(s)}$ is a group fibred over~$S(\Z_p)_s$. The fibres have a
natural $\Z_p$-module structure: $G(\Z_p)_{e(s)}$ is the limit of the
$G(\Z/p^n\Z)_{e(s)}$ ($n\geq 1$), $S(\Z_p)_s$ is the limit of
the~$S(\Z/p^n\Z)_s$, and for each $n\geq 1$, the fibres of
$G(\Z/p^n\Z)_{e(s)}\to S(\Z/p^n\Z)_s$ are commutative groups
annihilated by~$p^{n-1}$. Let $T_{G/S}$ be the relative (geometric)
tangent bundle of $G$ over~$S$. Then its pullback $T_{G/S}(e)$ by $e$
is a vector bundle on $S$ of rank~$d$.
\begin{subLemma}\label{lem:log_and_exp}
  In this situation, and with $n$ the relative dimension of $S$
  over~$\Z_p$, the formal logarithm and exponential of $G$ base
  changed to $\Q\otimes\calO_{S,s}$ converge to maps
\[
\begin{aligned}
    \log \colon & \wt{G}^p_{e(s)}(\Z_p) = G(\Z_p)_{e(s)}\to (T_{G/S}(e))(\Z_p)_{0(s)} \\
    \exp\colon & \wt{T}_{G/S}(e)_{0(s)}^p(\Z_p) = (T_{G/S}(e))(\Z_p)_{0(s)}\to G(\Z_p)_{e(s)}\,,
\end{aligned}
\]
that are each other's inverse and, after a choice of parameters for
$G\to S$ at~$e(s)$ as in~(\ref{eq:param_bijection}), are given by $n+d$
elements of $\calO(\wt{G}^p_{e(s)})^{\wedge_p}$ and $n+d$
elements of $\calO(\wt{T}_{G/S}(e)_{0(s)}^p)^{\wedge_p}$.

For $a$ in $\Z_p$ and $g$ in $G(\Z_p)_{e(s)}$ we have $a{\cdot}g =
\exp(a{\cdot}\log g)$, and,  after a choice of parameters for $G\to S$
at~$e(s)$, this map $\Z_p \times G(\Z_p)_{e(s)}\to G(\Z_p)_{e(s)}$ is
given by $n+d$ elements of
$\calO(\A^1_{\Z_p}\times_{\Z_p}\wt{G}^p_{e(s)})^{\wedge_p}$.
The induced morphism
$\A^1_{\F_p}\times(\wt{G}^p_{e(s)})_{\F_p}\to
(\wt{G}^p_{e(s)})_{\F_p}$, where $(\wt{G}^p_{e(s)})_{\F_p}$
is viewed as the product of $T_{S_{\F_p}}(s)$ and~$T_{G/S}(e(s))$, is a morphism
over~$T_{S_{\F_p}}(s)$, bilinear in  $\A^1_{\F_p}$ and~$T_{G/S}(e(s))$.
\end{subLemma}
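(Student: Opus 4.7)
The plan is to invoke Honda's theorem (\cite{Hon}, Theorem~1) on $G$ base changed along $\calO_{S,s}\to \Q\otimes\calO_{S,s}$. This supplies the formal logarithm $\log$ and exponential $\exp$ as mutually inverse morphisms of formal groups between the completion of $G$ along $e$ and the completion of $T_{G/S}(e)$ along the zero section. After fixing parameters $x_1,\ldots,x_{n+d}$ at $e(s)$ as in~(\ref{eq:param_bijection}) and corresponding parameters at $0(s)$ on $T_{G/S}(e)$, each is then described by $n+d$ formal power series in the $x_i$ with coefficients in $\Q\otimes\calO_{S,s}^{\wedge}$, vanishing constant term and identity linear part.

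The crux is to control denominators after the rescaling $\tilde{x}_i=x_i/p$ implicit in the blow-up $\wt{G}^p_{e(s)}$ from Section~\ref{sec:formal_geometry}. Denominators in $\log$ divide $n$ in degree~$n$ and those in $\exp$ divide~$n!$; primes $\ell\neq p$ are invertible in $\Z_p$, and the substitution $x_i=p\tilde{x}_i$ contributes a factor $p^n$ in degree~$n$. Thus the transformed coefficients have $p$-adic valuation at least $n-v_p(n)$ for $\log$ and at least $n-v_p(n!)=((p-2)n+s_p(n))/(p-1)$ for $\exp$, where $s_p(n)$ is the base-$p$ digit sum; since $p>2$ both bounds tend to infinity with~$n$. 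Therefore the power series for $\log$ and $\exp$ lie in $\calO(\wt{G}^p_{e(s)})^{\wedge_p}$ and $\calO(\wt{T}_{G/S}(e)^p_{0(s)})^{\wedge_p}$ as required. Injectivity of evaluation of integral convergent power series on $\Z_p$-points then transports the formal identities $\log\circ\exp=\id$ and $\exp\circ\log=\id$ to the analytic maps, so the two are mutually inverse.

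The identity $a\cdot g=\exp(a\log g)$ is built into the formal logarithm as a morphism of formal groups to the additive tangent space, so holds for every integer $a$ on the nose. Both sides are continuous in $a$ (the right hand side visibly so, the left hand side because $G(\Z_p)_{e(s)}$ is a topological $\Z_p$-module), so density of $\Z$ in $\Z_p$ extends it to every $a\in\Z_p$. Expressing $(a,g)\mapsto a\cdot g$ as the composite of $g\mapsto\log g$, multiplication by $a$ in the tangent bundle (polynomial of bidegree $(1,1)$), and $\exp$, exhibits it as a composition of integral convergent power-series maps, giving the desired $n+d$ elements of $\calO(\A^1_{\Z_p}\times_{\Z_p}\wt{G}^p_{e(s)})^{\wedge_p}$.

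The step I expect to be most delicate is the mod-$p$ bilinearity clause, but the valuation analysis above makes it transparent: every monomial of degree $\geq 2$ in the rescaled series for $\log$ and $\exp$ has strictly positive $p$-adic valuation, so both reductions mod~$p$ are the identity on coordinates. Under the identification of $(\wt{G}^p_{e(s)})_{\F_p}$ with the product $T_{S_{\F_p}}(s)\times T_{G/S}(e(s))$ furnished by the chosen parameters, this makes the mod-$p$ reductions of $\log$ and $\exp$ the identity in the fibre direction and the canonical projection on the base. Consequently the reduction of $(a,g)\mapsto \exp(a\log g)$ becomes, fibrewise over $T_{S_{\F_p}}(s)$, the bilinear scalar multiplication $\A^1_{\F_p}\times T_{G/S}(e(s))\to T_{G/S}(e(s))$, which is exactly the claimed bilinearity.
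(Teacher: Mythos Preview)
Your approach is essentially the same as the paper's: invoke Honda for the formal $\log$ and $\exp$, bound the denominators of $\log$ (the paper does this via $\de\log_i=\omega_i$ and the integration formula, giving $|J|\cdot\log_{i,J}\in\calO_{S,s}$), then pass to the rescaled coordinates $\tilde{x}_i=x_i/p$ and check $p$-adic convergence. The one genuine difference is your treatment of $\exp$: you assert an $n!$-denominator bound and analyse it directly, whereas the paper observes that in rescaled coordinates $\log$ reduces mod~$p$ to the identity on the fibre coordinates (for $p>2$) and then produces the inverse by Hensel in $\Z_p\langle\cdots\rangle$. The Hensel route is more self-contained (no separate denominator claim for $\exp$ is needed) and it hands you the mod-$p$ reduction of $\exp$ for free; your route is more symmetric but leaves the $n!$ bound for multivariable $\exp_F$ unjustified.

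Two small points. First, your valuation count forgets that the \emph{output} coordinate is also divided by~$p$, so the correct lower bound for the degree-$|J|$ coefficient of $\log$ in rescaled coordinates is $|J|-1-v_p(|J|)$, not $|J|-v_p(|J|)$; the conclusion (positivity for $|J|\geq2$ when $p>2$, hence $\log\equiv\id\bmod p$ on fibre coordinates) is unaffected. Second, the base/fibre distinction should be made explicit: $\log$ is an $S$-morphism, so $n$ of your $n{+}d$ series are literally the base parameters $t_1,\ldots,t_n$, and only the $d$ fibre series are nontrivial. The paper keeps these separate throughout, which is what makes the final bilinearity statement (over $T_{S_{\F_p}}(s)$, in $\A^1_{\F_p}$ and $T_{G/S}(e(s))$) immediate.
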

\begin{proof}
Let $t_1,\ldots,t_n$ be in $\calO_{S,s}$ such that $p, t_1,\ldots,t_n$
are parameters at~$s$. Then we have a bijection
\begin{subeqn}\label{eq:param_S}
  \tilde{t}\colon S(\Z_p)_s\to \Z_p^n, \quad
  a\mapsto p^{-1}{\cdot}(t_1(a),\ldots,t_n(a))\,.
\end{subeqn}
Similarly, let $x_1,\ldots,x_d$ be
generators for the ideal $I_{e(s)}$ of $e$ in $\calO_{G,e(s)}$. Then
$p$, the $t_i$ and the $x_j$ together are parameters
for~$\calO_{G,e(s)}$, and give the bijection
\begin{subeqn}\label{eq:param_G}
(t,x)^\sim\colon G(\Z_p)_{e(s)}\to \Z_p^{n+d}\,,\quad
b\mapsto p^{-1}{\cdot} (t_1(b),\ldots,x_d(b))\,.
\end{subeqn}
The $\de x_i$ form an $\calO_{S,s}$-basis of $\Omega^1_{G/S}(e)_s$,
and so give translation invariant differentials $\omega_i$
on~$G_{\calO_{S,s}}$. As $G$ is commutative, for all~$i$,
$\de \omega_i =0$ (\cite{Hon}, Proposition~1.3). We also have the dual
$\calO_{S,s}$-basis $\partial_i$ of~$T_{G/S}(e)$ and the bijection
\begin{subeqn}\label{eq:param_T}
(t,x)^\sim\colon (T_{G/S}(e))(\Z_p)_{0(s)}\to\Z_p^{n+d}\,,\quad
(a,\sum_iv_i\partial_i)\mapsto
p^{-1}{\cdot} (t_1(a),\ldots,t_n(a),v_1,\ldots,v_d)\,.
\end{subeqn}
Then $\log$ is given
by elements $\log_i$ in $(\Q\otimes\calO_{S,s})[[x_1,\ldots,x_d]]$
whose constant term is~$0$, uniquely determined (Proposition~1.1
in~\cite{Hon}) by the equality
\begin{subeqn}\label{eq:dlogi}
  \de\log_i = \omega_i \,, \quad \text{in}\,\,
  \oplus_j \calO_{S,s}[[x_1,\ldots,x_d]]{\cdot}\de x_j \,.
\end{subeqn}
Hence the formula from calculus,
$\log_i(x)-\log_i(0) = \int_0^1 (t\mapsto tx)^*\omega_i$, gives us
that, with
\begin{subeqn}\label{eq:coeff_logi}
\log_i=\sum_{J\neq0}\log_{i,J}x^J\quad\text{and}\quad \log_{i,J}\in(\Q\otimes
\calO_{S,s})\,, 
\end{subeqn}
we have, for all $i$ and $J$, with $|J|$ denoting the total degree of~$x^J$,
\begin{subeqn}\label{eq:logiJ}
  |J|{\cdot}\log_{i,J}\in\calO_{S,s}\,.
\end{subeqn}
The claim about convergence and definition of
$\log\colon G(\Z_p)_{e(s)}\to (T_{G/S}(e))(\Z_p)_{0(s)}$, is now
equivalent to having an analytic bijection $\Z_p^{n+d}\to\Z_p^{n+d}$
given by
\begin{subeqn}\label{eq:series_log}
\begin{tikzcd}
  G(\Z_p)_{e(s)} \arrow{rr}{?} \arrow{d}{(t,x)^\sim} & &
  (T_{G/S}(e))(\Z_p)_{0(s)}\arrow{d}{(t,x)^\sim}    \\
  \Z_p^{n+d}\arrow{rr}{?} & & \Z_p^{n+d} \\
  (a,b) \arrow[rr, mapsto, "?"] & &
  \left(a,p^{-1}{\cdot}\left(\sum_{J\neq0}\log_{i,J}(\tilde{t}^{-1}(a))(pb)^J\right)_i\right)\,. 
\end{tikzcd}
\end{subeqn}
We have, for each~$i$,
\begin{subeqn}\label{eq:logi}
  p^{-1}{\cdot}\sum_{J\neq0}\log_{i,J}(\tilde{t}^{-1}(a))(pb)^J
  = \sum_{J\neq0} \frac{p^{|J|-1}}{|J|}(|J|\log_{i,J})(\tilde{t}^{-1}(a))b^J\,.
\end{subeqn}
For each $i$, this expression is an element of
$\Z_p\langle\tilde{t}_1,\ldots,\tilde{t}_n,\tilde{x}_1,\ldots,\tilde{x}_d\rangle
= \calO(\wt{G}^p_{e(s)})^{\wedge_p}$, 
even when $p=2$, because for each~$J$, $|J|\log_{i,J}$ is in
$\calO_{S,s}$, which is contained in
$\Z_p\langle\tilde{t}_1,\ldots,\tilde{t}_n\rangle$, and the function
$\Z_{\geq1}\to\Q_p$, $r\mapsto p^{r-1}/r$ has values in $\Z_p$ and
converges to~$0$.  The existence and analyticity of $\log$ is now
proved (even for $p=2$). As $p>2$, the image of (\ref{eq:logi}) in
$\F_p\otimes \calO(\wt{G}^p_{e(s)})^{\wedge_p}$
is~$\tilde{x}_i$, and on the first $n$ coordinates, $\log$ is the
identity, so, by applying Hensel modulo powers of~$p$,
$\log$ is invertible, and the inverse is also given by $n+d$ elements
of $\calO(\wt{T}_{G/S}(e)_{0(s)}^p)^{\wedge_p}$.

The function $\Z_p \times G(\Z_p)_{e(s)}\to G(\Z_p)_{e(s)}$,
$(a,g)\mapsto \exp(a{\cdot}\log g)$ is a composition of maps given by
integral convergent power series, hence it is also of that form.
\end{proof}

\subsection{Parametrisation by power series}
\label{sec:param_power_series}

The notation and assumptions are as in the beginning of
Section~\ref{sec:closure-finiteness}, in particular, $p>2$ and $T$ is
as defined in~(\ref{eq:def_T}). We have a
$t$ in $T(\F_p)$, with image $j_b(u)$ in~$J(\F_p)$, and a $\tilde{t}$
in $T(\Z)$ lifting~$t$.  For every $Q$ in $T(\Z)$ mapping to $j_b(u)$
in $J(\F_p)$ there are unique $\eps\in\Z^{\times,\rho-1}$ and
$n\in\Z^r$ such that $Q=\eps{\cdot}D_{\tilde{t}}(n)$: the image of $Q$
in $J(\Z)$ is in~$J(\Z)_{j_b(u)}$, hence differs from the image
$x_{\tilde{t}}$ in $J(\Z)$ of $\tilde{t}$ by an element of $J(\Z)_0$
(with here $0\in J(\F_p)$), $\sum_i n_ix_i$ for a unique $n\in\Z^r$,
hence $D_{\tilde{t}}(n)$ and $Q$ are in $T(\Z)$ and have the same
image in $J(\Z)$, and that gives the unique~$\eps$. So we have a
bijection
\begin{subeqn}\label{eq:param_Zpoints_T}
  \Z^{\times,\rho-1}\times\Z^r \lto T(\Z)_{j_b(u)} =
  \{Q\in T(\Z) : Q\mapsto j_b(u)\in J(\F_p)\}\,,
  \quad
  (\eps,n)\mapsto \eps{\cdot}D_{\tilde{t}}(n)\,.
\end{subeqn}
But a problem that we are facing is that the map
$\Z^r\to T(\F_p)_{j_b(u)}$ sending $n$ to the image
of~$D_{\tilde{t}}(n)$ depends on the (unknown) images of the $P_{i,j}$,
$R_{i,\tilde{t}}$ and $S_{\tilde{t},j}$ from~(\ref{eq:Pij})
in~$P^{\times,\rho-1}(\F_p)$, and so we do not know for which $n$ and
$\eps$ the point $\eps{\cdot}D_{\tilde{t}}(n)$ is
in~$T(\Z)_t$. Luckily we have the $\Z_p^{\times,\rho-1}$-action
on~$T(\Z_p)$. Using that $\Z_p^\times=\F_p^\times\times (1+p\Z_p)$ we
have $\F_p^{\times,\rho-1}$ acting on $T(\Z_p)_{j_b(u)}$, compatibly
with the torsor structure on $T(\F_p)_{j_b(u)}$. So, for every $n$ in
$\Z^r$ there is a unique $\xi(n)$ in $\F_p^{\times,\rho-1}$ such that
$\xi(n){\cdot}D_{\tilde{t}}(n)$ is in~$T(\Z_p)_t$. We define
\begin{subeqn}\label{eq:Dprime}
D'(n):=\xi(n){\cdot}D_{\tilde{t}}(n)\,.
\end{subeqn}
Then for all $n$ in $\Z^r$,
\begin{subeqn}\label{eq:kappa_and_Dprime}
\kappa_\Z(n)=D_{\tilde{t}}((p-1){\cdot}n)=D'((p-1){\cdot}n) \,,
\end{subeqn}
because
$D_{\tilde{t}}((p-1){\cdot}n)$ maps to $t$ in~$T(\F_p)$.
Moreover for every $Q$ in $T(\Z)_t$ there is a unique $n\in\Z^r$ and
a unique $\eps\in\Z^{\times,\rho-1}$ such that
$Q=\eps{\cdot}D_{\tilde{t}}(n)=\xi(n){\cdot}D_{\tilde{t}}(n)=D'(n)$. Hence
\begin{subeqn}\label{eq:TZ_and_Dprime}
  T(\Z)_t\subset D'(\Z^r)\,.
\end{subeqn}
The following lemma proves the existence and uniqueness of the
$\kappa_i$ of Theorem~\ref{thm:p-adic_closure}, and the claims on the
degrees of the~$\ol{\kappa}_i$.

\begin{subLemma}\label{lem:A'C'D'}
After any choice of parameters of $\calO_{T,t}$
as in Theorem~\ref{thm:p-adic_closure}, $D'$ is given by elements
$\kappa_1',\ldots,\kappa_{g+\rho-1}'$ of
$\calO(\A^r_{\Z_p})^{\wedge_p}$, and then $\kappa_\Z$ is given by
$\kappa_1,\ldots,\kappa_{g+\rho-1}$ with, for all
$i\in\{1,\ldots,g+\rho-1\}$ and all $a\in\Z_p^r$,
\[
  \kappa_i(a)=\kappa_i'((p-1)a)\,.
\]
For all $i$ in $\{1,\dots,g+\rho-1\}$ we let $\ol{\kappa}_i'$ be the
reduction mod~$p$ of~$\kappa_j'$. Then 
$\ol{\kappa}_1',\ldots,\ol{\kappa}_g'$ are of degree at most~1,
and the remaining $\ol{\kappa}_j'$ are of degree at most~2.
\end{subLemma}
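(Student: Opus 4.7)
The plan is to extend the formula defining $D_{\tilde t}$ from~$\Z^r$ to~$\Z_p^r$ by applying the preceding Lemma on the formal logarithm/exponential to the two partial group structures on~$P^{\times,\rho-1}$. The first partial law $+_1$ makes $P^{\times,\rho-1}$ a commutative smooth group scheme over $(J^{\vee 0})^{\rho-1}$, an extension of $J$ by~$\Gm^{\rho-1}$; the second law $+_2$ makes it the analogous object over~$J$. Both satisfy the hypotheses of the preceding Lemma, so multiplication by elements of~$\Z_p$ in their residue disks of identity is given by integral convergent power series, mod~$p$ bilinear in tangent coordinates.

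The key observation is that the $J$-components of $R_{i,\tilde t}$ and $P_{i,j}$, and the $(J^{\vee 0})^{\rho-1}$-components of $S_{\tilde t,j}$ and $P_{i,j}$, all reduce to~$0$ mod~$p$, because the~$x_i$ lie in $J(\Z)_0$ and so $x_i$ and $f(mx_i)$ vanish there. Consequently the reductions mod~$p$ of these three elements lie in the $\Gm^{\rho-1}$-fibres of $P^{\times,\rho-1}$, which are identified with $\F_p^{\times,\rho-1}$ via the trivialisations on the $\{0\}$-sections. Taking Teichm\"uller lifts $\tilde r_i,\tilde s_j,\tilde p_{i,j}\in\mu_{p-1}(\Z_p)^{\rho-1}$ of these reductions and using the $\Gm^{\rho-1}$-torsor action (which agrees with $+_1$- or $+_2$-translation by the corresponding scalar viewed in any fibre), I factor
\[
R_{i,\tilde t}=\tilde r_i\cdot R'_i,\qquad S_{\tilde t,j}=\tilde s_j\cdot S'_j,\qquad P_{i,j}=\tilde p_{i,j}\cdot P'_{i,j},
\]
with primed factors now reducing to the identity in the appropriate partial group scheme.

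Exploiting bi-additivity of $+_1$ and $+_2$ in the central $\Gm^{\rho-1}$ (guaranteed by the biextension compatibility~(\ref{eq:bi_ext_compat})), the Teichm\"uller scalars pull out of the iterated sums defining $A_{\tilde t},B_{\tilde t},C$ and yield
\[
D_{\tilde t}(n)=\Xi(n)\cdot\wt D(n),\quad \Xi(n)=\prod_i\tilde r_i^{\,n_i}\prod_j\tilde s_j^{\,n_j}\prod_{i,j}\tilde p_{i,j}^{\,n_in_j},
\]
where $\wt D(n)$ is obtained from the formula for $D_{\tilde t}(n)$ by replacing each of $R_{i,\tilde t},S_{\tilde t,j},P_{i,j}$ with its primed version. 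Applying the preceding Lemma to each multiplication-by-$n_i$ or $n_j$ in the appropriate partial group scheme, and composing with the algebraic group laws $+_1,+_2$, gives a unique extension of~$\wt D$ to $\Z_p^r\to T(\Z_p)_t$ represented, in any system of parameters at~$t$, by elements $\kappa'_1,\ldots,\kappa'_{g+\rho-1}$ of $\Z_p\langle z_1,\ldots,z_r\rangle$. For $n\in\Z^r$ the mod~$p$ reduction of $\Xi(n)$ is exactly $\xi(n)^{-1}$ by construction of~$\xi$, so $\xi(n)\cdot D_{\tilde t}(n)=\wt D(n)$; hence $D'$ is represented by these~$\kappa'_i$. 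The identity $\kappa_i(a)=\kappa'_i((p-1)a)$ is then immediate from $\kappa_\Z(n)=D'((p-1)n)$.

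For the degree bounds modulo~$p$, the bilinearity in the preceding Lemma gives that each single-index sum $\sum_{i,1}n_iR'_i$, $\sum_{j,2}n_jS'_j$ is mod~$p$ linear in~$n$ in tangent coordinates of the corresponding partial group scheme, while the double sum $\sum_{i,1}n_i\cdot_1\bigl(\sum_{j,2}n_j\cdot_2 P'_{i,j}\bigr)$ is mod~$p$ bilinear, hence quadratic in~$n$. Since the $J$-projection of $D'(n)$ is $x_{\tilde t}+\sum_in_ix_i$, affine-linear in~$n$ in tangent coordinates at $j_b(u)$, we get $\deg\ol{\kappa}'_i\le1$ for $1\le i\le g$; the remaining $\Gm^{\rho-1}$-fibre coordinates absorb the quadratic $C'$-contribution, giving $\deg\ol{\kappa}'_i\le2$ for $g+1\le i\le g+\rho-1$. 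The main obstacle is the clean factorisation $D_{\tilde t}(n)=\Xi(n)\cdot\wt D(n)$: this requires the biextension compatibility together with centrality of $\Gm^{\rho-1}$ in both partial group structures, so that the Teichm\"uller scalars commute past all iterated $+_1$- and $+_2$-additions.
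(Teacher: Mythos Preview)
Your proof is correct and follows essentially the same route as the paper's: you replace $P_{i,j}$, $R_{i,\tilde t}$, $S_{\tilde t,j}$ by $\F_p^{\times,\rho-1}$-translates reducing to~$1$, show the resulting map $\wt D$ coincides with~$D'$, and extend it to~$\Z_p^r$ via the log/exp lemma, with the degree bounds coming from bilinearity. The only notable differences are stylistic: the paper avoids computing the explicit scalar $\Xi(n)$ by observing directly that $\wt D(n)$ and $D'(n)$ lie in the same $\F_p^{\times,\rho-1}$-orbit and both reduce to~$t$; and for the linearity of $\ol\kappa'_1,\ldots,\ol\kappa'_g$ the paper argues via the $J\times(J^{\vee 0})^{\rho-1}$-coordinates of~$C'$, whereas your direct appeal to the known $J$-projection $x_{\tilde t}+\sum_i n_ix_i$ is cleaner.
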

\begin{proof}
In order to get a formula for~$D'(n)$, we introduce variants of the
$P_{i,j}$, $R_{i,\wt{t}}$, and $S_{\wt{t},j}$ as
follows. The images in $(J\times (J^{\vee 0 })^{\rho -1})(\F_p)$ of
these points are of the form $(0,*)$, $(0,*)$, and $(*,0)$,
respectively. Hence the fibers over them of $P^{\times,\rho-1}$ are
rigidified, that is, equal to~$\F_p^{\times,\rho-1}$. We define their
variants $P_{i,j}'$, $R_{i,\wt{t}}'$, and
$S_{\wt{t},j}'$ in $P^{\times,\rho-1}(\Z_p)$ to be the unique
elements in their orbits under $\F_p^{\times,\rho-1}$ whose images in
$P^{\times,\rho-1}(\F_p)$ are equal to the element $1$
in~$\F_p^{\times,\rho-1}$. Replacing, in~(\ref{eq:A_and_B})
and~(\ref{eq:C}), these $P_{i,j}$, $R_{i,\wt{t}}$, and
$S_{\wt{t},j}$ by $P_{i,j}'$, $R_{i,\wt{t}}'$, and
$S_{\wt{t},j}'$ gives variants $A'$, $B'$ and~$C'$, and using
these in~(\ref{eq:D_ttilde_n}) gives a variant~$D_{\tilde{t}}'(n)$
of~\ref{eq:Dprime}.

Then, for all $n$ in $\Z^r$, $D_{\tilde{t}}'(n)$ and
$D'(n)$ (as in~(\ref{eq:Dprime})) are equal, because both are
in~$P^{\times,\rho-1}(\Z_p)_t$, and in the same
$\F_p^{\times,\rho-1}$-orbit. Hence we have, for all $n$ in~$\Z^r$:
\begin{subeqn}\label{eq:Dprime_formula}
\begin{aligned}
\displaystyle  A'(n) & =\sideset{}{_2}\sum_{j=1}^r n_j
\cdot_2 S_{\wt t, j}' \,, \quad 
\displaystyle B'(n) =
\sideset{}{_1}\sum_{i=1}^r n_i \cdot_1 R_{i,\wt t}'\,, \\
\displaystyle C' (n) & = \sideset{}{_1}\sum_{i=1}^r n_i \cdot_1 \left(
\sideset{}{_2}\sum_{j=1}^r n_j \cdot_2 P'_{i,j} \right)\,, \\
D'(n) & = \left(C'(n)+_2 B'(n)\right) +_1 \left(A'(n) +_2 \wt{t} \right)\,.
\end{aligned}
\end{subeqn}
This shows how the map $n\mapsto D'(n)$ is built up from the two
partial group laws $+_1$ and $+_2$ on~$P^{\times,\rho-1}$, and the
iterations ${\cdot}_1$ and~${\cdot}_2$. Lemma~\ref{lem:log_and_exp}
gives that the iterations are given by integral convergent power
series. The functoriality in Section~\ref{sec:formal_geometry} gives
that the maps induced by $+_1$ and $+_2$ on residue polydisks are
given by integral convergent power series. Stability under composition
then gives that $n\mapsto D'(n)$ is given by elements
$\kappa_1',\ldots,\kappa_{g+\rho-1}'$ of $\Z_p\langle
z_1,\ldots,z_r\rangle$. 

We call the $\kappa_i'$ the coordinate functions of the extension
$D'\colon\Z_p^r\to T(\Z_p)_t=\Z_p^{g+\rho-1}$, and their images
$\ol{\kappa}_1',\ldots,\ol{\kappa}_{g+\rho-1}'$ in
$\F_p[z_1,\ldots,z_r]$ the mod~$p$ coordinate functions, viewed as a
morphism $\ol{D}'_{\F_p}\colon\A^r_{\F_p}\to \A^{g+\rho-1}_{\F_p}$. 

The mod~$p$ coordinate functions of
$A'\colon\Z_p^r\to P^{\times,\rho-1}(\Z_p)=\Z_p^{\rho g+\rho-1}$
(after choosing the necessary parameters) are all of degree at
most~1. The same holds for~$B'$. We define 
\begin{subeqn}\label{eq:Cprime2}
C'_2\colon\Z^r\times\Z^r\lto P^{\times,\rho-1}(\Z_p),\quad 
\displaystyle C'_2 (n,m) = \sideset{}{_1}\sum_{i=1}^r n_i \cdot_1 \left(
\sideset{}{_2}\sum_{j=1}^r m_j \cdot_2 P'_{i,j} \right)\,.
\end{subeqn}
Then the mod~$p$ coordinate functions of~$C'_2$, elements of
$\F_p[x_1,\ldots,x_r,y_1,\ldots,y_r]$, are linear in the~$x_i$, and in
the~$y_j$. Hence of degree at most~2, and the same follows for the
mod~$p$ coordinate functions of~$C'$. However, as the first $\rho g$
parameters for $P^{\times,\rho-1}$ come from $J\times J^{\vee
  \rho-1}$, and the 1st and 2nd partial group laws there act on
different factors, the first $\rho g$ mod~$p$ coordinate functions of
$C'$ are in fact linear. As $D'$ is obtained by summing, using the
partial group laws, the results of~$A'$, $B'$ and~$C'$, we conclude
that $\ol{\kappa}_1',\ldots,\ol{\kappa}_g'$ are of degree at most~1,
and the remaining $\ol{\kappa}_j$ are of degree at most~2. The same
holds then for all~$\ol{\kappa}_j$.
\end{proof}

\subsection{The $p$-adic closure}\label{sec:p-adic_closure}
We know from~(\ref{eq:kappa_and_Dprime}) that 
$\kappa_\Z(\Z^r)=D'((p-1)\Z^r)$. From~(\ref{eq:kappa_Z}) we know that
$\kappa_\Z(\Z^r)\subset T(\Z)_t$. From~(\ref{eq:TZ_and_Dprime}) we
know that $T(\Z)_t\subset D'(\Z^r)$. So together we have:
\begin{subeqn}\label{eq:inclusions_kappa}
D'((p-1)\Z^r)=\kappa_\Z(\Z^r) \subset T(\Z)_t \subset D'(\Z^r)\,.
\end{subeqn}
We have extended $D'$ to a continuous map $\Z_p^r\to T(\Z_p)_t$. As
$\Z_p^r$ is compact, $D'(\Z_p^r)$ is closed in $T(\Z_p)_t$. As $\Z^r$
and $(p-1)\Z^r$ are dense in $\Z_p^r$, the closures of their images
under $D'$ are both equal to $D'(\Z_p^r)$, and equal to~$\kappa(\Z_p^r)$.
This finishes the proof of Theorem~\ref{thm:p-adic_closure}.

\section{Explicit description of the Poincar\'e torsor}
The aim of this section is to give explicit descriptions of the
Poincar\'e torsor $P^\times$ on $J\times J^{\vee,0}$ and its partial
group laws, to be used for doing computations when applying
Theorem~\ref{thm:finiteness}. The main results are as follows.
Proposition~\ref{prop_P_and_M_and_norm} describes the fibre of $P$
over a point of $J\times J^{\vee,0}$, say with values in $\Z/p^2\Z$
with $p$ not dividing~$n$ or in $\Z[1/n]$, when the corresponding
points of $J$ and $J^{\vee,0}$ are given by a line bundle on~$C$ (over
$\Z/p^2\Z$ or~$\Z[1/n]$, and rigidified at~$b$) and an effective
relative Cartier divisor on~$C$ (over $\Z/p^2\Z$ or~$\Z[1/n]$).  It
also translates the partial group laws of $P^\times$ in terms of such
data. Lemma~\ref{lem:norm_and_lin_equiv} shows how to deal with linear
equivalence of divisors. Lemma~\ref{lem:M_symmetry} makes the symmetry
of $P^\times$ explicit. Lemma~\ref{lem:parametrization_residue_disk_M}
gives parametrisations of residue polydisks of $P^\times(\Z/p^2\Z)$,
and Lemma~\ref{lem:part_group_laws_res_disks_M} gives partial group
laws on these residue polydisks. Proposition~\ref{prop:M_and_N_q}
describes the unique extension over $J\times J^{\vee,0}$ of the
Poincar\'e torsor on $(J\times J^{\vee,0})_{\Z[1/n]}$, in terms of
line bundles and divisors on~$C$. Finally,
Proposition~\ref{prop:P_over_Z-point} describes the fibres of $P$ over
$\Z$-points of~$J\times J^{\vee,0}$.

In this article, we have chosen to use line bundles and divisors on
curves for describing the jacobian and the Poincar\'e torsor. Another
option is to use line bundles on curves and the determinant of
coherent cohomology, as in Section~2 of~\cite{M-B_MP}. We note that in Section~2, only the restriction of $P$ to $J^0\times J^{\vee,0}$
is treated, and moreover, under the assumption that $C$ is nodal (that
is, all fibres $C_{\F_p}$ are reduced and have only the mildest
possible singularities). Another choice we have made is to develop the
basic theory of norms of $\Gm$-torsors under finite locally free
morphisms in this article
(Sections~\ref{sec:norms1}--\ref{sec:norms2}) and not to refer, for
example, to EGA or SGA, because we think this is easier for the
reader, and because this way we could adapt the definition directly to
our use of it.

\subsection{Norms}\label{sec:norms1}
Let $S$ be a scheme, $f\colon S'\to S$ be finite and locally free, say
of rank~$n$. Then $\calO_{S'}=f_*\calO_{S'}$ (we view $\calO_{S'}$ as
a sheaf on~$S$) is an $\calO_S$-algebra, locally free as
$\calO_S$-module of rank~$n$, and $\calO_{S'}^\times$ is a subsheaf of
groups of $\GL_{\calO_S}(\calO_{S'})$. Then the norm morphism is the
composition
\begin{subeqn}\label{eq:def_norm_groups}
  \begin{tikzcd}
    \calO_{S'}^\times \arrow[bend left]{rr}{\Norm_{S'/S}}  \arrow[r,hook] &
    \GL_{\calO_S}(\calO_{S'})\arrow{r}{\det} & \calO_S^\times \,.
\end{tikzcd}  
\end{subeqn}
Our viewing of $\calO_{S'}$ as a sheaf on~$S$ does not change the
notion of $\calO_{S'}^\times$-torsor, because of the equivalence with invertible
$\calO_{S'}$-modules: triviality locally on $S'$ implies triviality
locally on~$S$.

For $T$ an $\calO_{S'}^\times$-torsor, we let $\Norm_{S'/S}(T)$ be the 
$\calO_S^\times$-torsor
\begin{subeqn}\label{eq:def_norm_torsor}
  \Norm_{S'/S}(T):=\calO_S^\times\otimes_{\calO_{S'}^\times}T =
  \left(\calO_S^\times\times T\right)/\calO_{S'}^\times\,,  
\end{subeqn}
with, for every open $U$ of $S$, and every $u\in
\calO_{S'}^\times(U)$,  $u$ acting as $(v,t)\mapsto
(v{\cdot}\Norm_{S'/S}(u),u ^{-1}{\cdot}t)$. 
This is functorial in~$T$: a morphism $\phi\colon T_1\to T_2$ induces
an isomorphism~$\Norm_{S'/S}(\phi)$. It is also functorial for
cartesian diagrams $(S_2'\to S_2)\to (S_1'\to S_1)$.

For $U\subset S$ open, $T$ an $\calO_{S'}^\times$-torsor, and $t\in
T(U)$, we have the isomorphism of $\calO_{S'}^\times|_U$-torsors
$\calO_{S'}^\times|_U\to T|_U$ sending $1$ to~$t$. Functoriality gives
$\Norm_{S'/S}(t)$ in $(\Norm_{S'/S}(T))(U)$, also denoted~$1\otimes t$.

The norm functor~(\ref{eq:def_norm_torsor}) is multiplicative:
\begin{subeqn}\label{eq:norm_mult}
  \Norm_{S'/S}(T_1\otimes_{\calO_{S'}} T_2)=
  \Norm_{S'/S}(T_1)\otimes_{\calO_S}\Norm_{S'/S}(T_2)\,,
\end{subeqn}
such that, if $U\subset S$ is open and $t_1$ and $t_2$ are in $T_1(U)$
and $T_2(U)$, then
\begin{subeqn}\label{eq: norm_mult_elmts}
\Norm_{S'/S}(t_1\otimes t_2)\mapsto\Norm_{S'/S}(t_1)\otimes
\Norm_{S'/S}(t_2)\,.
\end{subeqn}

Let $\calL$ be an invertible $\calO_{S'}$-module; locally on~$S$, it
is free of rank $1$ as $\calO_{S'}$-module. This gives us the
$\calO_{S'}^\times$-torsor (on~$S$)
$\Isom_{\calO_{S'}}(\calO_{S'},\calL)$, which gives back $\calL$ as
$\calL=\calO_{S'}\otimes_{\calO_{S'}^\times}\Isom_{\calO_{S'}}(\calO_{S'},\calL)$. The
norm of $\calL$ via $f\colon S'\to S$ is then defined as
\begin{subeqn}\label{eq:def_norm_inv_sheaf}
  \Norm_{S'/S}(\calL) :=
  \calO_S\otimes_{\calO_S^\times}\Norm_{S'/S}(\Isom_{\calO_{S'}}(\calO_{S'},\calL))\,.
\end{subeqn}
This construction is functorial for isomorphisms of invertible
$\calO_{S'}$-modules.

\subsection{Norms along finite relative Cartier divisors}\label{sec:norms2}
This part is inspired by~\cite{Ka-Ma}, section~1.1.  Let $S$ be a
scheme, let $f\colon X\to S$ be an $S$-scheme of finite
presentation. A finite effective relative Cartier divisor on
$f\colon X\to S$ is a closed subscheme $D$ of $X$ that is finite and
locally free over~$S$, and whose ideal sheaf~$I_D$ is locally
generated by a non-zero divisor (equivalently, $I_D$ is locally free
of rank~$1$ as $\calO_X$-module). For such a $D$ and an invertible
$\calO_X$-module~$\calL$, the norm of $\calL$ along $D$ is defined,
using~(\ref{eq:def_norm_inv_sheaf}), as
\begin{subeqn}\label{eq:def_norm_along_eff_divisor}
\Norm_{D/S}(\calL) := \Norm_{D/S}(\calL|_D)\,.
\end{subeqn}
Then $\Norm_{D/S}(\calL)$ is functorial for cartesian diagrams $(X'\to
S',\calL')\to(X\to S,\calL)$.  

\begin{subLemma}\label{lem_norms}
  Let $f\colon X\to S$ be a morphism of schemes that is of finite
  presentation. For $D$ a finite effective relative Cartier divisor
  on~$f$, the norm functor $\Norm_{D/S}$
  in~(\ref{eq:def_norm_along_eff_divisor}) is multiplicative
  in~$\calL$:
\begin{subeqn}\label{eq:norm_add_L}
\Norm_{D/S}(\calL_1\otimes\calL_2) =
\Norm_{D/S}(\calL_1)\otimes_{\calO_S}\Norm_{D/S}(\calL_2)\,,  
\end{subeqn}
with, for $U\subset S$ open, $V\subset X$ open, containing $f^{-1}U\cap
D$ and $l_i\in\calL_i(V)$  generating $\calL_i|_V$,
\begin{subeqn}\label{eq:norm_add_L_elements}
\Norm_{D/S}(l_1\otimes l_2) =
\Norm_{D/S}(l_1)\otimes\Norm_{D/S}(l_2)\,.
\end{subeqn}
Let $D_1$ and $D_2$ be finite effective relative Cartier divisors
on~$f$. Then the ideal sheaf $I_{D_1}I_{D_2}\subset\calO_X$ is locally
free of rank~$1$, the closed subscheme $D_1+D_2$ defined by it is a
finite effective relative Cartier divisor on~$f$. The norm functor
in~(\ref{eq:def_norm_along_eff_divisor}) is additive in~$D$:
\begin{subeqn}\label{eq:norm_add_in_D}
\Norm_{(D_1+D_2)/S}(\calL) =
\Norm_{D_1/S}(\calL)\otimes_{\calO_S}\Norm_{D_2/S}(\calL)\,,
\end{subeqn}
with, for $U\subset S$ open, $V\subset X$ open, containing $f^{-1}U\cap
(D_1+D_2)$ and $l\in\calL(V)$  generating $\calL|_{D_1+D_2}$,
\begin{subeqn}
\label{eq:norm_add_in_D_elements}
\Norm_{(D_1+D_2)/S}(l) =
\Norm_{D_1/S}(l)\otimes\Norm_{D_2/S}(l)\,.
\end{subeqn}
\end{subLemma}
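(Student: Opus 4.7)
The plan is to treat the multiplicativity in~$\calL$, the claim on~$D_1+D_2$, and the additivity in~$D$ in that order. All claims are canonical isomorphisms of $\calO_S^\times$-torsors, so it suffices to work Zariski-locally on~$S$: I would assume $S=\Spec A$ with $D$ (resp.~$D_1,D_2$) affine and $\calO_D$ (resp.~$\calO_{D_i}$) finite free over~$A$, and fix local generators of the invertible modules that appear.

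The multiplicativity~(\ref{eq:norm_add_L}) I would reduce directly to the torsor identity~(\ref{eq:norm_mult}) via the tautological canonical isomorphism
$$\Isom_{\calO_{S'}}(\calO_{S'},\calL_1\otimes\calL_2)\;\cong\;\Isom_{\calO_{S'}}(\calO_{S'},\calL_1)\otimes_{\calO_{S'}^\times}\Isom_{\calO_{S'}}(\calO_{S'},\calL_2),$$
sending $(\phi_1,\phi_2)\mapsto\phi_1\otimes\phi_2$. Plugging this into~(\ref{eq:def_norm_inv_sheaf}) yields~(\ref{eq:norm_add_L}), and chasing generators yields~(\ref{eq:norm_add_L_elements}).

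For~$D_1+D_2$: locally writing $I_{D_i}=(s_i)$ with $s_i\in\calO_X$ a non-zero divisor, the product~$s_1s_2$ generates~$I_{D_1}I_{D_2}$ and is itself a non-zero divisor (multiplication by~$s_2$ and then by~$s_1$ is injective), so $I_{D_1}\otimes_{\calO_X}I_{D_2}\to I_{D_1}I_{D_2}$ is an isomorphism and $I_{D_1}I_{D_2}$ is locally free of rank~$1$. The resulting short exact sequence
$$0\to I_{D_1}\otimes_{\calO_X}\calO_{D_2}\to\calO_{D_1+D_2}\to\calO_{D_1}\to 0,$$
whose left term is invertible over~$\calO_{D_2}$, exhibits $\calO_{D_1+D_2}$ as an extension of two $\calO_S$-modules finite locally free over~$S$, hence itself finite locally free.

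For the additivity~(\ref{eq:norm_add_in_D}), I would first establish the auxiliary local formula
$$\Norm_{D/S}(\calL)\;\cong\;\det\nolimits_{\calO_S}(\calL|_D)\otimes\det\nolimits_{\calO_S}(\calO_D)^{-1},$$
checked by taking a local generator~$l$ of~$\calL|_D$, which trivializes the left-hand side via the norm of $1\mapsto l$ and the right-hand side via $(e_1l\wedge\cdots\wedge e_nl)\otimes(e_1\wedge\cdots\wedge e_n)^{-1}$ for any $\calO_S$-basis $e_1,\ldots,e_n$ of~$\calO_D$, and observing that under $l\mapsto ul$ with $u\in B^\times$ both trivializations scale by the same factor~$\Norm_{B/A}(u)$. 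Tensoring the short exact sequence above with~$\calL$ and taking~$\det_{\calO_S}$ of both it and its untwisted version then assembles, via this formula, into
$$\Norm_{(D_1+D_2)/S}(\calL)\;\cong\;\Norm_{D_1/S}(\calL)\otimes\Norm_{D_2/S}(\calL|_{D_2}\otimes I_{D_1}|_{D_2})\otimes\Norm_{D_2/S}(I_{D_1}|_{D_2})^{-1};$$
invoking the multiplicativity from the first step on~$D_2$ (with $\calL_1=\calL|_{D_2}$, $\calL_2=I_{D_1}|_{D_2}$) collapses the last two factors to~$\Norm_{D_2/S}(\calL)$, proving~(\ref{eq:norm_add_in_D}). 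The element identity~(\ref{eq:norm_add_in_D_elements}) follows by threading a common generator~$l$ through all three determinant computations. The main obstacle will be the bookkeeping here: ensuring that the $I_{D_1}|_{D_2}$-twist introduced by the filtration on~$\calO_{D_1+D_2}$ cancels exactly against the corresponding twist on~$\calL|_{D_1+D_2}$, and checking that the canonical isomorphism thus obtained is independent of the local choices so that it glues to the global statement.
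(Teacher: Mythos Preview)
Your argument is largely sound, but there is one genuine omission and one methodological difference from the paper worth noting.

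\textbf{The omission.} Your short exact sequence shows that the pushforward $f_*\calO_{D_1+D_2}$ sits in an extension of finite locally free $\calO_S$-modules, hence is itself finite locally free. But this does not yet show that the morphism $D_1+D_2\to S$ is \emph{finite}: for that you need $D_1+D_2$ to be affine over~$S$, and a closed subscheme of~$X$ whose set-theoretic support is $|D_1|\cup|D_2|$ is not obviously affine. The paper addresses this separately: after reducing to the noetherian case via the finite-presentation hypothesis on~$f$, it observes that $(D_1+D_2)_\red$ is the scheme-theoretic image of the proper $S$-scheme $D_{1,\red}\coprod D_{2,\red}$, hence proper over~$S$; proper plus quasi-finite then gives finite. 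An alternative is Chevalley's theorem applied to the finite surjection $D_1\coprod D_2\to D_1+D_2$. Either way, this step is not covered by your filtration argument alone.

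\textbf{The different route for additivity.} For~(\ref{eq:norm_add_in_D}) the paper bypasses your determinant formula entirely. It simply notes that for $u\in\calO_{D_1+D_2}^\times$, multiplication by~$u$ on~$\calO_{D_1+D_2}$ respects the filtration $0\subset (I_{D_2})|_{D_1}\subset\calO_{D_1+D_2}$, acting on the invertible $\calO_{D_1}$-module $(I_{D_2})|_{D_1}$ through the image of~$u$ in~$\calO_{D_1}^\times$ and on the quotient~$\calO_{D_2}$ through the image in~$\calO_{D_2}^\times$. Taking $\det_{\calO_S}$ gives $\Norm_{(D_1+D_2)/S}(u)=\Norm_{D_1/S}(u)\cdot\Norm_{D_2/S}(u)$ directly, which is precisely the factorisation of group homomorphisms needed to push out the torsor. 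Your route through $\Norm_{D/S}(\calL)\cong\det_{\calO_S}(\calL|_D)\otimes\det_{\calO_S}(\calO_D)^{-1}$ is correct and conceptually pleasant, but it introduces the auxiliary twist by~$I_{D_1}|_{D_2}$ which you must then cancel using the multiplicativity in~$\calL$ already established; the paper's argument avoids this detour and the attendant bookkeeping you flagged as the main obstacle.
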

\begin{proof}
Let $D_1$ and $D_2$ be as stated. If $V\subset X$ is open, and $f_i$
generates $I_{D_i}|_V$, then $f_1f_2$ generates
$(I_{D_1}I_{D_2})|_V$, and this element of $\calO_X(V)$ is not a
zero-divisor because $f_1$ and $f_2$ are not. To show that $D_1+D_2$
is finite over~$S$, we replace $S$ by an affine open of it, and then
reduce to the noetherian case, using the assumption that $f$ is of
finite presentation. Then, $(D_1+D_2)_\red$ is the image of
$D_{1,\red}\coprod D_{2,\red}\to X$, and therefore is proper. Hence
$D_1+D_2$ is proper over $S$, and quasi-finite over~$S$, hence
finite over~$S$.  The short exact sequence
\begin{subeqn}
\label{eq:norm_s_e_s}
\begin{tikzcd}
I_{D_2}/I_{D_1+D_2} \arrow[d,equals] \arrow[r, hook] & \calO_{D_1+D_2}
\arrow[r, two heads] & \calO_{D_2} \\
(I_{D_2})|_{D_1}
\end{tikzcd}
\end{subeqn}
shows that $\calO_{D_1+D_2}$ is locally free as $\calO_S$-module, of
rank the sum of the ranks of the~$\calO_{D_i}$. So $D_1+D_2$ is a
finite effective relative Cartier divisor on $X\to S$.

We prove~(\ref{eq:norm_add_in_D}), by proving the required statement
about sheaves of groups. The diagram
\begin{subeqn}\label{eq:norm_comm_diag}
\begin{tikzcd}
  \calO_{D_1+D_2}^\times \arrow[bend left]{rrrrr}{\Norm_{(D_1+D_2)/S}} \arrow[r]
  & \calO_{D_1}^\times\times\calO_{D_2}^\times
  \arrow{rrr}{\Norm_{D_1/S}\times\Norm_{D_2/S}} & & & \calO_S ^\times\times\calO_S ^\times \arrow{r}{\cdot} & \calO_S ^\times  \\
u \arrow[rrrrr, mapsto] & & & & & \Norm_{D_1/S}(u)\Norm_{D_2/S}(u) \,.
\end{tikzcd}
\end{subeqn}
commutes, because multiplication by $u$ on $\calO_{D_1+D_2}$ preserves
the short exact sequence~(\ref{eq:norm_s_e_s}), multiplying on the sub
and quotient by its images in $\calO_{D_1}^\times$ and in
$\calO_{D_2}^\times$; note that the sub is an invertible
$\calO_{D_1}$-module.
\end{proof}

\subsection{Explicit description of the Poincar\'e torsor of a smooth curve}

Let $g$ be in $\Z_{\geq1}$, let $S$ be a scheme, and
$\pi\colon C\to S$ be a proper smooth curve, with geometrically
connected fibres of genus~$g$, with a section~$b\in C(S)$. Let
$J\to S$ be its jacobian. On $C\times_SJ$ we have~$\calL^\univ$, the
universal invertible $\calO$-module of degree zero on~$C$, rigidified
at~$b$.

Let $d\geq 0$, and $C^{(d)}$ the $d$th symmetric power of $C\to S$ (we
note that the quotient $C^d\to C^{(d)}$ is finite, locally free of
rank~$d!$, and commutes with base change on~$S$). Then on
$C\times_S C^{(d)}$ we have~$D$, the universal effective relative
Cartier divisor on~$C$ of degree~$d$. Hence, on
$C\times_SJ\times_SC^{(d)}$ we have their pullbacks $D_J$
and~$\calL^\univ_{C^{(d)}}$, giving us
\begin{subeqn}\label{eq:univ_norm_bundle}
\calN_d := \Norm_{D_J/(J\times_S C^{(d)})}(\calL^\univ_{C^{(d)}})\,.  
\end{subeqn}
This invertible $\calO$-module~$\calN_d$ on $J\times_S C^{(d)}$,
rigidified at the zero-section of~$J$, gives us a morphism of
$S$-schemes $C^{(d)}$ to~$\Pic_{J/S}$. The point $db$ (the divisor $d$
times the base point~$b$) in $C^{(d)}(S)$ is mapped to~$0$, precisely
because $\calL^\univ$ is rigidified at~$b$, and~\ref{eq:norm_add_in_D}. Hence
there is a unique morphism $\Box\colon C^{(d)}\to J^\vee=\Pic^0_{J/S}$
such that the pullback of the Poincar\'e bundle $P$ on
$J\times J^\vee$ by $(\id,\Box)\colon J\times C^{(d)}\to J\times J^\vee$,
with its rigidifications, is the same as~$\calN_d$. The following
proposition tells us what the morphism~$\Box$ is, and the next section
tells us what the induced isomorphism is between the fibres of $\calN_d$
at points of $J\times C^{(d)}$ with the same image in~$J\times_S J$.  

\begin{subProposition}\label{prop_P_and_M_and_norm}
  The pullback of $P$ by
  $(j_b,j_b^{*,-1})\colon C\times_S J\to J\times_S J^\vee$ together
  with its rigidifications at $b$ and~$0$, is equal to~$\calL^\univ$.
  
  Let $d$ be in~$\Z_{\geq0}$. The morphism
  $\Box\colon C^{(d)}\to J^\vee=\Pic^0_{J/S}$ is the composition of first
  $\Sigma\colon C^{(d)}\to J$, sending, for every $S$-scheme~$T$, each point
  $D \in C^{(d)}(T)$ to the class of $\calO_{C_T}(D-db)$ twisted
  by the pullback from $T$ that makes it rigidified at~$b$, followed
  by~$j_b^{*,-1}\colon J\to J^\vee$.  Summarised in a diagram, with
  $\calM:=(\id\times j_b^{*,-1})^*P$:
\begin{subeqn}\label{eq:P_and_M_and_norm}
\begin{tikzcd}
\calL^\univ && P \arrow{ll} \arrow{rr} & & \calM
\arrow{rr}{\wt{\id\times\Sigma}} & & \calN_d \\ 
C\times_S J \arrow{rr}{j_b\times j_b^{*,-1}} && J \times_S J^\vee & & 
J\times_S J \arrow{ll}[swap]{\id\times j_b^{*,-1}} & &
J\times_S C^{(d)} \arrow{ll}[swap]{\id\times\Sigma}\,.
\end{tikzcd}      
\end{subeqn}
Then $\calM$, with its rigidifications
at $\{0\}\times_SJ$ and $J\times_S\{0\}$, is symmetric.
For $T\to S$, $x$ in $J(T)$ given by an invertible $\calO$-module  $\calL$ on
$C_T$ rigidified at~$b$, and $y=\Sigma(D)$ in $J(T)$ given by an
effective relative divisor $D$ of degree $d$ on~$C_T$ we have
\begin{subeqn}\label{eq:P_and_M_and_N_on_points}
P\left(x,j_b^{*,-1}(y)\right) = \calM(x,y) = \Norm_{D/T}(\calL)\,.  
\end{subeqn}
For $c_1$ and $c_2$ in~$C(S)$, we have
\begin{subeqn}\label{eq:M_at_c1_and_c2}
\calM\left(j_b(c_1),j_b(c_2)\right) =
c_2^*\left(\calO_C(c_1-b)\right)\otimes b^*\left(\calO_C(b-c_1)\right)\,,
\end{subeqn}
and, as invertible $\calO$-modules on $C\times_SC$, with $\Delta$ the
diagonal and $\pr_{\emptyset}\colon C\times_SC\to S$ the structure
morphism, we have 
\begin{subeqn}\label{eq:jb_jb_pullback_M}
(j_b\times j_b)^*\calM =
\calO(\Delta)\otimes\pr_1^*\calO(-b)\otimes\pr_2^*\calO(-b)
\otimes\pr_{\emptyset}^*b^*T_{C/S}\,. 
\end{subeqn}
For $d>2g-2$, $\wt{\id\times\Sigma}$ gives $\calN_d$ a descent
datum along $\id\times\Sigma$ that gives~$\calM$ on~$J\times_S J$.
For $T$ an $S$-scheme, $x\in J(S)$ given by $\calL$ on $C_T$,
rigidified at~$b$, $D_1$ and $D_2$ in $C^{(d_1)}(S)$
and~$C^{(d_2)}(S)$, the isomorphism
\begin{subeqn}\label{eq:plus_2_M}
\calM(x,\Sigma(D_1+D_2)) =   \calM(x,\Sigma(D_1))\otimes \calM(x,\Sigma(D_2))
\end{subeqn}
corresponds, via $\wt{\id\times\Sigma}$, to
\begin{subeqn}\label{eq:plus_2_N}
\begin{aligned}
  \calN_{d_1+d_2}(x,D_1+D_2) & = \Norm_{(D_1+D_2)/T}(\calL) =
  \Norm_{D_1/T}(\calL)\otimes \Norm_{D_2/T}(\calL)\\
  & = \calN_{d_1}(x,D_1)\otimes\calN_{d_2}(x,D_2)\,,
\end{aligned}
\end{subeqn}
using Lemma~\ref{lem_norms}.

For $T$ an $S$-scheme and $x_1$ and $x_2$ in $J(T)$ given by
$\calO$-modules $\calL_1$ and $\calL_2$ on~$C_T$, rigidified at~$b$,
and $D$ in $C^{(d)}(T)$, the isomorphism
\begin{subeqn}\label{eq:plus_1_M}
\calM(x_1+x_2,\Sigma(D)) =   \calM(x_1,\Sigma(D))\otimes \calM(x_2,\Sigma(D))
\end{subeqn}
corresponds, via $\wt{\id\times\Sigma}$, to
\begin{subeqn}\label{eq:plus_1_N}
\begin{aligned}
  \calN_d(x_1+x_2,D) & = \Norm_{D/T}(\calL_1\otimes\calL_2) =
  \Norm_{D/T}(\calL_1)\otimes\Norm_{D/T}(\calL_2) \\
  & = \calN_d(x_1,D)\otimes\calN_d(x_2,D)\,,
\end{aligned}
\end{subeqn}
using Lemma~\ref{lem_norms}.
\end{subProposition}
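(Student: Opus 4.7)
The plan is to reduce every statement to the identity $(j_b,j_b^{*,-1})^*P=\calL^\univ$, and from there to the case $d=1$ by additivity of the norm in~$D$. First I would prove $(j_b,j_b^{*,-1})^*P=\calL^\univ$ on $C\times_SJ$ via the universal property of the Poincar\'e bundle: for an $S$-scheme~$T$ and a morphism $\alpha\colon T\to J^\vee$, $(\id,\alpha)^*P$ is the invertible $\calO$-module on~$J_T$ classified by $\alpha\in J^\vee(T)=\Pic^0_{J_T/T}(T)$. Taking $T=J$, $\alpha=j_b^{*,-1}$, and then pulling back by $j_b$ on the $C$-factor, the fibrewise restriction at $x\in J(T)$ is the pullback to~$C_T$ of the line bundle on~$J_T$ classified by $j_b^{*,-1}(x)$, which equals $j_b^*(j_b^{*,-1}(x))=x\in J(T)$; the total is thus~$\calL^\univ$, with rigidifications at~$b$ and~$0$ matching by construction.

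Next, to identify~$\Box$, I would first handle $d=1$: for $c\in C(T)$, the definition of~$\calN_1$ as a norm along a section gives $\calN_1(x,c)=(c,\id_{J_T})^*\calL^\univ$ on~$J_T$, and by the previous step this equals the line bundle on~$J_T$ classified by $j_b^{*,-1}(j_b(c))\in J^\vee(T)$ (using symmetry of the polarisation $j_b^{*,-1}=-\lambda$, so that $(j_b^{*,-1})^\vee=j_b^{*,-1}$); hence $\Box|_{d=1}=j_b^{*,-1}\circ j_b=j_b^{*,-1}\circ\Sigma$. For arbitrary~$d$, pulling back along the finite locally free cover $C^d\to C^{(d)}$ decomposes $\calN_d$, via Lemma~\ref{lem_norms}, as the tensor product of $d$ copies of $\calN_1$ pulled back by the projections, while $\Sigma$ becomes $\sum j_b\circ\pr_i$; the two classifying morphisms agree on~$C^d$, and descent along the quotient gives $\Box=j_b^{*,-1}\circ\Sigma$ on~$C^{(d)}$. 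Then~(\ref{eq:P_and_M_and_N_on_points}) is the definition of~$\calM$ unravelled; specialising to $x=j_b(c_1)$ (rigidified at~$b$ by tensoring $\calO_C(c_1-b)$ with $\pi^*b^*\calO_C(b-c_1)$) and $D=c_2$, using $\Norm_{c_2/S}=c_2^*$ on a section, yields~(\ref{eq:M_at_c1_and_c2}); and~(\ref{eq:jb_jb_pullback_M}) follows by applying~(\ref{eq:M_at_c1_and_c2}) to the universal pair $(c_1,c_2)=(\pr_1,\pr_2)$ on $T=C\times_SC$, translating each pullback into a divisor: $c_2^*\calO_C(c_1)=\calO(\Delta)$, $c_2^*\calO_C(b)=\pr_2^*\calO(b)$, $b^*\calO_C(c_1)=\pr_1^*\calO(b)$, $b^*\calO_C(b)=\pi^*b^*T_{C/S}$.

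Symmetry of~$\calM$ then follows because~(\ref{eq:jb_jb_pullback_M}) is manifestly symmetric in the two factors, the image of $j_b\times j_b$ generates $J\times_SJ$ under the partial group laws, and the biextension structure of~$\calM$ (inherited from~$P$) forces a symmetric isomorphism on a generating set to extend uniquely. For $d>2g-2$, Riemann-Roch makes $\Sigma\colon C^{(d)}\to J$ a $\P^{d-g}$-bundle, in particular faithfully flat; since $\Box=j_b^{*,-1}\circ\Sigma$ factors through~$J$, the canonical equality $\calN_d=(\id\times\Sigma)^*\calM$ furnishes the descent datum~$\wt{\id\times\Sigma}$. Finally, (\ref{eq:plus_1_M})--(\ref{eq:plus_1_N}) and (\ref{eq:plus_2_M})--(\ref{eq:plus_2_N}) simply match the two partial group laws of~$P$ (inherited from the theorem of the cube) against, respectively, multiplicativity of the norm in~$\calL$ and additivity in~$D$ from Lemma~\ref{lem_norms}. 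The main obstacle throughout is not any individual step but the bookkeeping of rigidifications at~$b$ and along the zero-sections, so that every canonical isomorphism produced along the way matches on the nose rather than merely up to units.
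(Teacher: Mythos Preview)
Your approach is essentially the paper's, with only cosmetic reordering. Both arguments prove $(j_b,j_b^{*,-1})^*P=\calL^\univ$ by the universal property, then reduce the identification of $\calN_d$ with $(\id\times\Sigma)^*\calM$ to the case of a divisor split as a sum of sections (you via $C^d\to C^{(d)}$, the paper via the same finite locally free base change written inline), and both rely on the self-duality of the polarisation $-\lambda$ for the symmetry step that swaps the two arguments of~$P$. The paper cites Moret-Bailly for this symmetry; your direct invocation of $(j_b^{*,-1})^\vee=j_b^{*,-1}$ is the same fact, and already suffices---your later alternative argument for symmetry via~(\ref{eq:jb_jb_pullback_M}) and ``extension by biextension structure'' is then redundant (and, as stated, would need more care to make rigorous).

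The one place where you are lighter than the paper is the compatibilities (\ref{eq:plus_1_M})--(\ref{eq:plus_1_N}) and (\ref{eq:plus_2_M})--(\ref{eq:plus_2_N}). You correctly identify that the issue is matching two canonical isomorphisms on the nose rather than up to a unit, but you do not say how to pin down that unit. The paper's device is to universalise (work over $J_T$ with the tautological~$\calL$), observe that the discrepancy is then a global unit on $J_T$, hence constant in~$\calO(T)^\times$, and evaluate at $0\in J(T)$ where both sides become the multiplication map $\calO_T\otimes\calO_T\to\calO_T$. This is exactly the ``bookkeeping of rigidifications'' you flag as the main obstacle; it would strengthen your write-up to make this step explicit.
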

\begin{proof}
  Let $T$ be an $S$-scheme, and $x$ be in~$J(T)$. Then $x$ corresponds
  to the invertible $\calO$-module $(\id\times x)^*\calL^\univ$
  on~$C_T$, rigidified at~$b$. Let $z:=j_b^{*,-1}(x)$
  in~$J^\vee(T)$. Then $j_b^*(z)=x$, meaning that the pullback of
  $(\id\times z)^*P$ on $J_T$ rigidified at~$0$ by $j_b$ equals
  $(\id\times x)^*\calL^\univ$ on~$C_T$ rigidified at~$b$. Taking
  $T:=J$ and $x$ the tautological point gives the first claim of the
  proposition.

  The symmetry of $\calM$ with its rigidifications follows from
  \cite{M-B_MP}, (2.7.1) and Lemma~2.7.5, and~(2.7.7),
  using~\ref{eq:lambda_and_jb}. 

  Now we prove~(\ref{eq:P_and_M_and_N_on_points}). So let $T$ and $x$
  be as above, and $y=\Sigma(D)$ in $J(T)$ given by a relative divisor
  $D$ of degree $d$ on~$C_T$. As $C^d\to C^{(d)}$ is finite and
  locally free of rank~$d!$, we may and do suppose that $D$ is a sum of
  sections, say $D=\sum_{i=1}^d(c_i)$, with $c_i\in C(T)$. Then we
  have, functorially:
  \begin{subeqn}\label{eq:P_and_norm}
\begin{aligned}
P(x,j_b^{*,-1}(y)) & = P(y, j_b^{*,-1} (x)) = P(\Sigma(D),
j_b^{*,-1}(x))\\
& =  P\left(\sum_i j_b(c_i), j_b^{*,-1}(x)\right) 
= \bigotimes_i P(j_b(c_i), j_b^{*,-1}(x))\\
& = \bigotimes_i \calL^\univ(c_i, x) = \bigotimes_i\calL(c_i) =
\Norm_{D/T}(\calL)\,. 
\end{aligned}        
\end{subeqn}
Identities~(\ref{eq:M_at_c1_and_c2}) and~(\ref{eq:jb_jb_pullback_M})
follow directly from~(\ref{eq:P_and_M_and_N_on_points}).

Now we prove the claimed compatibility between (\ref{eq:plus_1_M})
and~(\ref{eq:plus_1_N}).  We do this by considering the case where
$\calL$ is universal, that is, base changing to $J_T$ and $x$ the universal
point. Then, on~$J_T$, we have 2 isomorphisms from
$\Norm_{(D_1+D_2)/J_T}(\calL)$ to
$\Norm_{D_1/J_T}(\calL)\otimes \Norm_{D_2/J_T}(\calL)$. These differ by an
element of $\calO(J_T)^\times=\calO(T)^\times$. Hence it suffices to
check that this element equals $1$ at~$0\in J(T)$. This amounts to
checking that the 2 isomorphisms are equal for $\calL=\calO_{C_T}$ with
the standard rigidification at~$b$. Then, both isomorphisms are the
multiplication map $\calO_T\otimes_{\calO_T}\calO_T\to\calO_T$.

The compatibility between (\ref{eq:plus_2_M}) and~(\ref{eq:plus_2_N})
is proved analogously.
\end{proof}

\begin{subRemark}\label{rem_no_base_point_deg_0}
From Proposition~\ref{prop_P_and_M_and_norm} one easily deduces,
in that situation, for $T$ an $S$-scheme, $x$ in $J(T)$ given by an invertible
$\calO$-module $\calL$ on~$C_T$, and $D_1$ and $D_2$ effective relative
Cartier divisors on~$C_T$, of the same degree, a canonical isomorphism
\begin{subeqn}\label{eq:M_and_N_no_base}
\calM(x,\Sigma(D_1)-\Sigma(D_2)) = \Norm_{D_1/T}(\calL)\otimes \Norm_{D_2/T}(\calL)^{-1}\,,
\end{subeqn}
satisfying the analogous compatibilities as in
Proposition~\ref{prop_P_and_M_and_norm}. No rigidification of $\calL$
at~$b$ is needed. In fact, for $\calL_0$ an invertible
$\calO_T$-module, we have
$\Norm_{D_1/T}(\pi^*\calL_0)=\calL_0^{\otimes d}$, where $\pi\colon
C_T\to T$ is the structure morphism and $d$ is the degree
of~$D_1$. Hence the right hand side of~(\ref{eq:M_and_N_no_base}) is
independent of the choice of~$\calL$, given~$x$.
\end{subRemark}

\subsection{Explicit isomorphism for norms along equivalent
  divisors} \label{sec:iso_norm_along_equiv_D} Let $g$ be in
$\Z_{\geq1}$, let $S$ be a scheme, and $p\colon C\to S$ be a proper
smooth curve, with geometrically connected fibres of genus~$g$, with a
section~$b\in C(S)$.  Let $D_1,D_2$ be effective relative Cartier
divisors of degree $d$ on~$C$, that we also view as elements
of~$C^{(d)}(S)$. Recall from Proposition~\ref{prop_P_and_M_and_norm}
the morphism $\Sigma\colon C^{(d)}\to J$. Then
$\Sigma(D_1)=\Sigma(D_2)$ if and only if $D_1,D_2$ are linearly
equivalent in the following sense: locally on $S$, there exists an $f$
in $\calO_C(U)^\times$, with $U:=C\setminus(D_1 \cup D_2)$, such that
$f{\cdot}\colon\calO_U\to\calO_U$ extends to an isomorphism
$f{\cdot}\colon\calO_C(D_1)\to\calO_C(D_2)$. In this case, we define
$\divisor(f) = D_2-D_1$.  Proposition~\ref{prop_P_and_M_and_norm}
gives us, for each invertible $\calO$-module $\calL$ of degree $0$ on
$C$ rigidified at~$b$ (viewed as an element of~$J(S)$) specific
isomorphisms
\begin{subeqn} \label{eq:norm_iso_equiv_div_abstract}
\begin{aligned}
\Norm_{D_1/S}(\calL) & = \calN_d(\calL, D_1) =  \calM(\calL, \Sigma(D_1))
= \calM(\calL, \Sigma(D_2)) = \calN_d(\calL, D_2)\\
& =\Norm_{D_2/S}(\calL)\,.
\end{aligned}
\end{subeqn}
Now we describe explicitly this isomorphism $\Norm_{D_1/S}(\calL)\to
\Norm_{D_2/S}(\calL)$. To do so we first describe \emph{an} isomorphism 
\begin{subeqn}\label{eq:isom_norm_equiv_div}
\phi_{\calL, D_1,D_2}\colon \Norm_{D_1/S}(\calL) \lto \Norm_{D_2/S}(\calL)
\end{subeqn}
that is functorial for Cartesian diagrams
$(C'\to S',\calL', D_1', D_2')\to(C\to S,\calL,D_1,D_2)$ and then we
prove that \emph{this} isomorphism is the one
in~(\ref{eq:norm_iso_equiv_div_abstract}).

We construct $\phi_{\calL,D_1,D_2}$ locally on $S$ and the
functoriality of the construction takes care of making it global. So,
suppose that $f$ is as above: $f\in\calO_C(U)^\times$, and
$f{\cdot}\colon\calO_U\to\calO_U$ extends to an isomorphism
$f{\cdot}\colon\calO_C(D_1)\to\calO_C(D_2)$. Let $n\in\Z$ with
$n>2g-2+2d$. Then $p_*(\calL(nb))\to p_*\calL(nb)|_{D_1+D_2}$ and
$p_*(\calO_C(nb))\to p_*\calO_C(nb)|_{D_1+D_2}$ are surjective, and
(still localising on~$S$) $p_*(\calL(nb))$ and $p_*(\calO_C(nb))$ are
free $\calO_S$-modules and $\calL(nb)|_{D_1+D_2}$ and
$\calO_C(nb)|_{D_1+D_2}$ are free $\calO_{D_1+D_2}$-modules of
rank~1. Then we have $l_0$ in $(\calL(nb))(C)$ and $l_1$ in
$(\calO_C(nb))(C)$ restricting to generators on $D_1+D_2$. Let
$D^-:=\divisor(l_1)$ and $D^+:=\divisor(l_0)$, and let
$V:=C\setminus(D^++D^-)$. Note that $V$ contains $D_1+D_2$ and that
$U$ contains $D^++D^-$. Then, on~$V$, $l:=l_0/l_1$ is in $\calL(V)$,
generates $\calL|_{D_1+D_2}$, and multiplication by~$l$ is an
isomorphism ${\cdot}l\colon\calO_C(D^+-D^-)\to\calL$, that is,
$\divisor(l)=D^+-D^-$. Let
\begin{subeqn}\label{eq:f_div_l}
f(\divisor(l)) = f(D^+ - D^-):= \Norm_{D^+/S}(f|_{D^+})\cdot
\Norm_{D^-/S}(f|_{D^-})^{-1} \in \calO_S(S)^\times \,,
\end{subeqn}
and let $\phi_{\calL, l,f}$ be the isomorphism, given in terms of generators
\begin{subeqn}\label{eq:phi_dirty}
\begin{aligned}
  \phi_{\calL,l,f}\colon \Norm_{D_1/S}(\calL) & \lto
  \Norm_{D_2/S}(\calL) \\
  \Norm_{D_1/S}(l) &\longmapsto f(\divisor(l))^{-1} \cdot
  \Norm_{D_2/S}(l) \,.
\end{aligned} 
\end{subeqn}
Now suppose that we made other choices $n'$, $l_0'$,~$l_1'$. Then we
get ${D^-}'$, ${D^+}'$, $V'$, $l'$ and~$\phi_{\calL, l',f}$. Then
there is a unique function $g\in \calO_C(V\cap V')^\times$ such that
$l' = gl$ in $\calL(V\cap V')$. Then 
\begin{subeqn}\label{eq:phi_indep_l}
\begin{aligned}
\phi_{\calL,l',f}(\Norm_{D_1/S}(l)) & =
\phi_{\calL,l',f}(\Norm_{D_1/S}(g^{-1}l')) \\
& = \phi_{\calL,l',f}(g^{-1}(D_1)\Norm_{D_1/S}(l')) \\
& = g^{-1}(D_1){\cdot}\phi_{\calL,l',f}(\Norm_{D_1/S}(l')) \\
& = g^{-1}(D_1){\cdot}f(\divisor(l'))^{-1}{\cdot}\Norm_{D_2/S}(l') \\
& = g^{-1}(D_1){\cdot}f(\divisor(gl))^{-1}{\cdot}\Norm_{D_2/S}(gl) \\
& =
g^{-1}(D_1){\cdot}f(\divisor(g)+\divisor(l))^{-1}{\cdot}g(D_2){\cdot}\Norm_{D_2/S}(l)
\\
& =
g^{-1}(D_1){\cdot}f(\divisor(g))^{-1}{\cdot}g(D_2){\cdot}f(\divisor(l))^{-1}{\cdot}\Norm_{D_2/S}(l)\\
& =
g(\divisor(f)){\cdot}f(\divisor(g))^{-1}{\cdot}\phi_{\calL,l,f}(\Norm_{D_1/S}(l))\\
& = \phi_{\calL,l,f}(\Norm_{D_1/S}(l))\,,
\end{aligned}
\end{subeqn}
where, in the last step, we used Weil reciprocity, in a generality for
which we do not know a reference. The truth in this generality is
clear from the classical case by reduction to the universal case, in
which the base scheme is integral: take a suitable level structure
on~$J$, then consider the universal curve with this level structure,
and the universal 4-tuple of effective divisors with the necessary
conditions.  We conclude that $\phi_{\calL,l,f}=\phi_{\calL,l',f}$.

Now suppose that $f'$ is in $\calO_C(U)^\times$ with
$\divisor(f') = \divisor(f)$.  Then there is a unique
$u \in \calO_S(S)^\times$ such that $f' = u{\cdot}f$, and since $\calL$
has degree~$0$ on~$C$
\begin{subeqn}\label{eq:phi_indep_f}
\begin{aligned}
\phi_{\calL,l,f'}\left(\Norm_{D_1/S}(l)\right) &=
(u{\cdot}f)(\divisor(l))^{-1} {\cdot} \Norm_{D_2/S}(l) \\
& = u^{-\deg(\divisor(l))}f(\divisor(l))^{-1} {\cdot} \Norm_{D_2/S}(l) \\ 
& = f(\divisor(l))^{-1} {\cdot} \Norm_{D_2/S}(l)  = \phi_{\calL,
  l,f}\left(\Norm_{D_1/S}(l)\right) \,.
\end{aligned}
\end{subeqn}
Hence $\phi_{\calL,l,f'} = \phi_{\calL, l,f}$. We define
\begin{subeqn}\label{eq:explicit_iso_equiv_div}
  \phi_{D_1,D_2,\calL}\colon \Norm_{D_1/S}(\calL) \lto \Norm_{D_2/S}(\calL)
\end{subeqn}
as the isomorphism $\phi_{\calL,l,f}$ in~(\ref{eq:phi_dirty}) for any
local choice of~$f$ and~$l$.

\begin{subLemma}\label{lem:norm_and_lin_equiv}
With the assumptions as in the beginning of
Section~\ref{sec:iso_norm_along_equiv_D}, the isomorphism
$\phi_{\calL,D_1,D_2}$ in~(\ref{eq:explicit_iso_equiv_div}) is equal to the
isomorphism in~(\ref{eq:norm_iso_equiv_div_abstract}). 
\end{subLemma}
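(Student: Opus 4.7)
The plan is to show that the ratio $u \in \calO_S(S)^\times$ between the abstract isomorphism of~(\ref{eq:norm_iso_equiv_div_abstract}) and the explicit isomorphism $\phi_{\calL,D_1,D_2}$ constructed in~(\ref{eq:explicit_iso_equiv_div}) is identically~$1$. Both isomorphisms are functorial for Cartesian base changes in $(C\to S,\calL,D_1,D_2,f)$: the abstract one because it is assembled from the functorial objects $P$, $\calM$ and~$\Sigma$ via Proposition~\ref{prop_P_and_M_and_norm}; the explicit one by the construction preceding~(\ref{eq:explicit_iso_equiv_div}). Hence $u$ itself is functorial, and it suffices to prove the identity in a universal setting.

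Next I would vary $\calL$ over the jacobian. Fix $(S,C/S,b,D_1,D_2,f)$ with $\Sigma(D_1)=\Sigma(D_2)$, pass to $J_S$, and consider on $C\times_S J_S$ the universal invertible $\calO$-module $\calL^\univ$ of degree~$0$ rigidified at~$b$, together with the pullbacks of $D_1$, $D_2$ and~$f$. This produces $u(\calL^\univ,D_{1,J_S},D_{2,J_S})\in\calO_{J_S}(J_S)^\times$. Since $J_S\to S$ is proper and smooth with geometrically connected fibres, $\calO_{J_S}(J_S)^\times=\calO_S(S)^\times$, so this unit is pulled back from~$S$. Pulling it back along the zero section $0\colon S\to J_S$, which classifies $\calL=\calO_C$ with its standard rigidification at~$b$, we obtain the equality $u(\calL^\univ,D_{1,J_S},D_{2,J_S})=u(\calO_C,D_1,D_2)$.

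It remains to check $u(\calO_C,D_1,D_2)=1$. For the explicit side, take $l=1\in\calO_C(C)^\times$ as rational section of~$\calO_C$; then $\divisor(l)=0$, so $f(\divisor(l))=1$, and formula~(\ref{eq:phi_dirty}) specialises to the identity on~$\calO_S$ under the canonical identifications $\Norm_{D_i/S}(\calO_C)=\calO_S$ sending $\Norm_{D_i/S}(1)\mapsto 1$. For the abstract side, the rigidification of $\calM$ along $\{0\}\times J$ (inherited from the rigidification of the Poincar\'e bundle) makes the canonical chain $\Norm_{D_i/S}(\calO_C)=\calN_d(0,D_i)=\calM(0,\Sigma(D_i))=\calO_S$ compatible with the trivialisation $1\mapsto 1$; hence the abstract isomorphism is the identity as well. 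This finishes the proof modulo the reduction steps above.

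The main obstacle will be the careful handling of the universal step, in particular checking that the two isomorphisms evaluated at $\calL^\univ$ really descend to a well-defined global unit on~$J_S$. For the explicit side this requires working Zariski-locally on $J_S$ (where a rational section $l$ of $\calL^\univ$ exists) and then invoking the independence from the choice of~$l$ proved in~(\ref{eq:phi_indep_l}) to glue; for the abstract side it is automatic. A secondary subtlety is that the descent interpretation of the abstract isomorphism uses $d>2g-2$; for smaller~$d$ one first replaces $D_i$ by $D_i+Nb$ with $N$ large, observing that both isomorphisms are compatible with this modification via the additivities (\ref{eq:plus_2_M})/(\ref{eq:plus_2_N}) and the triviality of $\Norm_{Nb/S}(\calL)$ coming from the rigidification of $\calL$ at~$b$.
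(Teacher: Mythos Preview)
Your proposal is correct and follows essentially the same route as the paper: pass to the universal $\calL$ over $J_S$, use that $\calO_{J_S}(J_S)^\times=\calO_S(S)^\times$, restrict to $0\in J_S(S)$, and check both isomorphisms are the identity for $\calL=\calO_C$. Your final paragraph is overcautious: the explicit $\phi_{\calL,D_1,D_2}$ is already globally defined by the construction preceding~(\ref{eq:explicit_iso_equiv_div}) (independence of $l$ and $f$ was proved there), and the chain~(\ref{eq:norm_iso_equiv_div_abstract}) uses only the pullback $\wt{\id\times\Sigma}$ of Proposition~\ref{prop_P_and_M_and_norm}, which holds for all~$d$ --- the condition $d>2g-2$ there is only needed for descent in the reverse direction.
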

\begin{proof}
We do this, as
in the proof of Proposition~\ref{prop_P_and_M_and_norm}, by
considering the case of the universal~$\calL$, that is, we base change
via $J\to S$, and then restricting to~$0\in J(S)$. This amounts to
checking that the 2 isomorphisms are equal for $\calL=\calO_C$ with
the standard rigidification at~$b$. In this case,
$\Norm_{D_i/S}(\calO_C)=\calO_S$, with $\Norm_{D_i/S}(1)=1$. Hence
$\phi_{D_1,D_2,\calO_C}=\phi_{\calO_C,1,f}$ is the identity
on~$\calO_S$ (use~(\ref{eq:phi_dirty})). The other isomorphism is the
identity on $\calO_S$ because of the rigidifications of $\calM$ and
$\calN_d$ on $0\times J$ and~$0\times C^{(d)}$.
\end{proof}

\subsection{Symmetry of the Norm for divisors on smooth
  curves}\label{sec:symmetry_norms} 
Let $C\to S$ be a proper and smooth curve with geometrically connected
fibres. For $D_1$, $D_2$ effective relative Cartier divisors on~$C$ we
define an isomorphism
\begin{subeqn}\label{eq:norm_symm_isom}
\phi_{D_1,D_2}\colon \Norm_{D_1/S}(\calO_C(D_2)) \lto \Norm_{D_2/S}(\calO_C(D_1))  
\end{subeqn}
that is functorial for cartesian diagrams $(C'/S',D'_1,D'_2)\to (C/S,D_1,D_2)$. 

If suffices to define this isomorphism in the universal case, that is,
over the scheme that parametrises all $D_1$ and~$D_2$. Let $d_1$ and
$d_2$ be in $\Z_{\geq0}$, and let $U:=C^{(d_1)}\times_S C^{(d_2)}$,
and let $D_1$ and $D_2$ be the universal divisors on~$C_U$. Then we
have the invertible $\calO_U$-modules $\Norm_{D_1/U}(\calO_C(D_2))$
and $\Norm_{D_2/U}(\calO_C(D_1))$. The image of $D_1\cap D_2$ in $U$
is closed, let $U^0$ be its complement. Then, over~$U^0$, $D_1$ and
$D_2$ are disjoint, and the restrictions of
$\Norm_{D_1/U}(\calO_C(D_2))$ and $\Norm_{D_2/U}(\calO_C(D_1))$ are
generated by $\Norm_{D_1/U}(1)$ and $\Norm_{D_2/U}(1)$, and there is a
unique isomorphism $(\phi_{D_1,D_2})_{U^0}$ that sends $\Norm_{D_1/U}(1)$
to $\Norm_{D_2/U}(1)$.

We claim that this isomorphism extends to an isomorphism over~$U$. To
see it, we base change by $U'\to U$, where $U'=C^{d_1}\times_S
C^{d_2}$, then $U'\to U$ is finite, locally free of
rank~$d_1!{\cdot}d_2!$. Then $D_1=P_1+\cdots+P_{d_1}$ and
$D_2=Q_1+\cdots+Q_{d_2}$ with the $P_i$ and $Q_j$ in~$C(U')$. The
complement of the inverse image $U'^0$ in $U'$ of $U^0$ is the union
of the pullbacks $D_{i,j}$ under $\pr_{i,j}\colon U'\to C\times_S C$ of the
diagonal, that is, the locus where $P_i=Q_j$. Each $D_{i,j}$ is an
effective relative Cartier divisor on~$U'$, isomorphic as $S$-scheme
to $C^{d_1+d_2-1}$, hence smooth over~$S$. Now
\begin{subeqn}\label{eq:norm_Pi_Qj}
  \Norm_{D_1/U'}(\calO(D_2)) = \bigotimes_{i,j}P_i^*\calO(Q_j)\,,\quad
  \Norm_{D_2/U'}(\calO(D_1)) = \bigotimes_{i,j}Q_j^*\calO(P_i)\,,
\end{subeqn}
and, on~$U'^0$,
\begin{subeqn}\label{eq:norm_of_1_is_1}
  \Norm_{D_1/U'}(1) =  \bigotimes_{i,j} 1\,,\quad
\Norm_{D_2/U'}(1) =  \bigotimes_{i,j} 1\,,
  \quad\text{in $\calO(U'^0)$.}
\end{subeqn}
The divisor on $U'$ of the tensor-factor $1$ at $(i,j)$, both in
$\Norm_{D_1/U'}(1)$ and in $\Norm_{D_2/U'}(1)$,
is~$D_{i,j}$. Therefore, the isomorphism $(\phi_{D_1,D_2})_{U^0}$
extends, uniquely, to an isomorphism $\phi_{D_1,D_2}$ over $U'$, which
descends uniquely to~$U$.

Our description of $\phi_{D_1,D_2}$ allows us to compute it in the
trivial case where $D_1$ and $D_2$ are disjoint. One should be a bit
careful in other cases. For example, when $d_1=d_2=1$ and $P=Q$, we
have $P^*\calO_C(Q)=P^*\calO_C(P)$ is the tangent space of $C\to S$
at~$P$, and hence also at~$Q$, but $\phi_{P,Q}$ is multiplication
by~$-1$ on that tangent space. The reason for that is that the switch
automorphism on $C\times_S C$ induces $-1$ on the normal bundle of the
diagonal.

\begin{subLemma}\label{lem:M_symmetry}
Let $b$ be an $S$-point on~$C$. Because of the symmetry in
Proposition~\ref{prop_P_and_M_and_norm},
using~(\ref{eq:M_and_N_no_base}), for $D_1$, $D_2$ relative effective
divisors on $C$ of degree $d_1, d_2$ over $S$ we have the following
diagram of isomorphisms defining~$\psi_{D_1, D_2}$
\[
\begin{tikzcd}
 \calM(\Sigma(D_2), \Sigma(D_1))\arrow[equal]{d}\arrow[equal]{r} &
 \Norm_{D_1/S}(\calO_C(D_2 - d_2 b)) \otimes
 b^*\calO_C(D_2-d_2b)^{-d_1} \arrow{d}{\psi_{D_1,D_2}}\\ 
 \calM(\Sigma(D_1), \Sigma(D_2)) \arrow[equal]{r} &
 \Norm_{D_2/S}(\calO_C(D_1 - d_1 b)) \otimes b^*\calO_C(D_1-d_1b)^{-d_2} \,.  
\end{tikzcd}    
\]
Then
\begin{subeqn}\label{eq:symms_are_the_same}
\psi_{D_1,D_2} = \phi_{D_1,D_2} \otimes \phi_{D_1,d_2 b}^{-1} \otimes
\phi_{d_1 b, D_2}^{-1} \otimes \phi_{d_1 b, d_2 b}\,. 
\end{subeqn}
Moreover the isomorphisms $\phi_{D_1,D_2}$, and consequently
$\psi_{D_1,D_2}$, are compatible with addition of divisors, that is,
under~(\ref{eq:plus_1_N}) and~(\ref{eq:plus_2_N}), for every triple
$D_1,D_2,D_3$ of relative Cartier divisors on $C$ we have 
\begin{subeqn}\label{eq:phi_and_addition}
\phi_{D_1 + D_2, D_3} = \phi_{D_1, D_3} \otimes \phi_{D_2, D_3}\,,
\quad \phi_{D_1, D_2 + D_3} = \phi_{D_1, D_2} \otimes \phi_{D_1, D_3}\,. 
\end{subeqn}
\end{subLemma}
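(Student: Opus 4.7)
My plan is to handle the two claims of the lemma separately, starting with the additivity~(\ref{eq:phi_and_addition}) and then using a parallel strategy for~(\ref{eq:symms_are_the_same}). In both cases the key technique mirrors the one already used to construct $\phi_{D_1,D_2}$ in Section~\ref{sec:symmetry_norms}: by functoriality in Cartesian diagrams, reduce to the universal case over $C^{(d_1)}\times_S C^{(d_2)}\times_S\cdots$, restrict to the schematically dense open where the tautological divisors (and, when relevant, the multiples of the base point) are pairwise disjoint, and invoke density to propagate equalities to the whole base. The reason density suffices is that a self-isomorphism of an invertible sheaf differs from the identity by an element of~$\calO^\times$, and an element of $\calO(U)^\times$ equal to~$1$ on a schematically dense open is equal to~$1$ on~$U$.

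For the additivity~(\ref{eq:phi_and_addition}), I would treat the first identity (the second being symmetric). Pulling back to the universal base $U:=C^{(d_1)}\times_S C^{(d_2)}\times_S C^{(d_3)}$ with tautological divisors $D_1,D_2,D_3$, let $U^0\subset U$ be the dense open on which the three divisors are pairwise disjoint. Over~$U^0$, Lemma~\ref{lem_norms} identifies $\Norm_{D_1+D_2/U^0}(1)$ with $\Norm_{D_1/U^0}(1)\otimes\Norm_{D_2/U^0}(1)$, and likewise $\Norm_{D_3/U^0}(1)$ with itself tensored with itself along the decomposition $\calO_C(D_1+D_2)=\calO_C(D_1)\otimes\calO_C(D_2)$. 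The definition of $\phi_{\star,\star}$ over the disjointness locus sends the former generator to the latter, so both $\phi_{D_1+D_2,D_3}$ and $\phi_{D_1,D_3}\otimes\phi_{D_2,D_3}$ coincide on~$U^0$. Density on~$U$ yields the identity globally.

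For the symmetry formula~(\ref{eq:symms_are_the_same}), I would again reduce to the universal setting, now also base-changing to~$J$ to make $\calL$ universal (exactly as in the proof of Proposition~\ref{prop_P_and_M_and_norm}), and restrict to the open locus where $D_1$, $D_2$, $d_1 b$ and $d_2 b$ are pairwise disjoint. On this locus, each of the four tensor factors on the right is already described on generators by the universal definition from Section~\ref{sec:symmetry_norms}, swapping $\Norm_{D_i/S}(1)$'s in the expected way; the combined tensor product thus sends the canonical generator of $\Norm_{D_1/S}(\calO_C(D_2-d_2b))\otimes b^*\calO_C(D_2-d_2b)^{-d_1}$ to the canonical generator of $\Norm_{D_2/S}(\calO_C(D_1-d_1b))\otimes b^*\calO_C(D_1-d_1b)^{-d_2}$. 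On the other hand, $\psi_{D_1,D_2}$ is, via the identifications of Proposition~\ref{prop_P_and_M_and_norm} and Remark~\ref{rem_no_base_point_deg_0}, the image of the symmetry $P^\times(x,y)\cong P^\times(y,x)$ of the Poincar\'e torsor; its effect on the same generators (when the divisors are disjoint from each other and from~$b$) is the identity $\Norm_{D_1}(1)\mapsto\Norm_{D_2}(1)$, because the rigidification correction factors $b^*\calO_C(D_i-d_i b)^{-d_j}$ appear identically on source and target and cancel. Hence the two isomorphisms agree on the dense open, and density completes the argument.

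The main obstacle is the rigidification bookkeeping in~(\ref{eq:symms_are_the_same}): keeping track of the correction factors $b^*\calO_C(D_i-d_i b)^{-d_j}$ that encode the trivialisations of $\calM$ at $0\times J$ and $J\times 0$, and handling the sign subtlety flagged after the construction of $\phi_{P,Q}$ (the switch on $C\times_S C$ acts as~$-1$ on the normal bundle of the diagonal, so $\phi_{D,D}$ is not the identity when $D$'s meet). Both difficulties disappear on the universal disjointness locus, which is precisely why reduction to that locus together with a density argument is, I expect, the cleanest route, and avoids a direct sign audit.
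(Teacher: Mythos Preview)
Your argument for the additivity~(\ref{eq:phi_and_addition}) is correct and is a legitimate variant of the paper's: both reduce to the universal base $C^{(d_1)}\times_S C^{(d_2)}\times_S C^{(d_3)}$, but you then check on the dense open where the three tautological divisors are pairwise disjoint, whereas the paper instead uses that the discrepancy is a global unit on the universal base, hence lies in $\calO_S(S)^\times$, and evaluates at the single $S$-point $(d_1 b,d_2 b,d_3 b)$, computing the sign $(-1)^{(d_1+d_2)d_3}$ on both sides.

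For~(\ref{eq:symms_are_the_same}), however, your plan does not go through as written. The open locus ``where $D_1$, $D_2$, $d_1 b$ and $d_2 b$ are pairwise disjoint'' is empty whenever $d_1,d_2\ge 1$, since $d_1 b$ and $d_2 b$ are both supported at the fixed section~$b$; so you cannot avoid the sign in~$\phi_{d_1 b,d_2 b}$ by passing to a dense open. If instead you restrict only to the locus where $D_1,D_2$ are disjoint from each other and from~$b$, then three of the four $\phi$-factors on the right send $\Norm(1)$ to $\Norm(1)$, but $\phi_{d_1 b,d_2 b}=(-1)^{d_1 d_2}$ survives; and on the left your assertion that the abstract symmetry of $\calM$ sends $\Norm_{D_1}(1)$ to $\Norm_{D_2}(1)$ is exactly the content to be proved --- it is not a consequence of the rigidification factors ``cancelling'', and in fact a sign must appear to match the right-hand side. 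The paper circumvents both problems at once: on the universal base $U=C^{(d_1)}\times_S C^{(d_2)}$ the ratio of the two isomorphisms is a global unit, hence an element of $\calO_U(U)^\times=\calO_S(S)^\times$, and specialising to the $S$-point $(D_1,D_2)=(d_1 b,d_2 b)$ makes both sides computable: $\psi_{d_1 b,d_2 b}$ is the identity because the symmetry of $\calM$ is compatible with the rigidification at $(0,0)$, and the right-hand side is $\phi_{d_1 b,d_2 b}\otimes\phi_{d_1 b,d_2 b}^{-1}\otimes\phi_{d_1 b,d_2 b}^{-1}\otimes\phi_{d_1 b,d_2 b}=\id$. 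The missing idea in your approach is precisely this evaluation at the base-point section, which is the one place where $\psi$ is known without already knowing the lemma.
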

\begin{proof}
It is enough to prove it in the universal case, that is, when $D_1$ and
$D_2$ are the universal divisors on $C_U$, and there we know that
there exists a $u$ in $\calO_U(U)^\times = \calO_S(S)^\times$
such that  
\begin{subeqn}
u \cdot \psi_{D_1,D_2} = \phi_{D_1,D_2} {\otimes} \phi_{D_1,d_2
  b}^{-1} {\otimes} \phi_{d_1 b, D_2}^{-1} {\otimes} \phi_{d_1 b, d_2
  b}\,. 
\end{subeqn}
Since the symmetry in Proposition~\ref{prop_P_and_M_and_norm} is
compatible with the rigidification at $(0,0) \in (J\times J)(S)$ then
$\psi_{d_1 b,d_2 b}$ is the identity on~$\calO_U$, as well as the
right hand side of~(\ref{eq:symms_are_the_same}) when $D_i =
d_ib$. Hence $u = u(d_1 b, d_2 b) = 1$,
proving~(\ref{eq:symms_are_the_same}). 

Now we prove~(\ref{eq:phi_and_addition}). As for
(\ref{eq:symms_are_the_same}), it is enough to prove it in the
universal case and then we can reduce to the case where $D_1 = d_1 b$,
$D_2 = d_2 b$ and $D_3 = d_3 b$ for $d_i$ positive integers where we
have
\begin{subeqn}
\begin{aligned}
\phi_{d_1b + d_2b, d_3b} & = \phi_{d_1b, d_3b} \otimes \phi_{d_2b, d_3b} =
(-1)^{(d_1 + d_2)d_3}\,, \\
\phi_{d_1b, d_2b + d_3b} & = \phi_{d_1b, d_2b}
\otimes \phi_{d_1b, d_3b} = (-1)^{d_1(d_2 + d_3)}\,.     
\end{aligned}
\end{subeqn}
\end{proof}

\subsection{Explicit residue disks and partial group laws}
\label{sec:parameters_on_P_mod_p2}

Let $C$ be a smooth, proper, geometrically connected curve
over~$\Zpp$, with a $b\in C(\Zpp)$, let $g$ be the genus, and let
$\calM$ be as in Proposition~\ref{prop_P_and_M_and_norm}. Let
$D = D^+ - D^-$ and $E= E^+ - E^-$ be relative Cartier divisors of
degree~$0$ on~$C$. For each $\alpha$ in $\calM^\times(\F_p)$ whose
image in $(J\times J)(\F_p)$ is given by $(D, E)$ we
parametrise~$\calM^\times(\Zpp)_\alpha$, under the assumption that
there exists a non-special split reduced divisor of degree~$g$
on~$C_{\F_p}$.

Let $b_1, \ldots, b_g $ in $C(\Zpp)$ have distinct images $\ol{b}_i$
in~$C(\F_p)$ such that $h^0(C_{\F_p}, \ol{b}_1+\cdots+ \ol{b}_g)=1$,
and let $b_{g+1},\ldots, b_{2g}$ in $C (\Zpp)$ be such that
the $\ol{b}_{g+i}$ are distinct and $h^0(C_{\F_p},
\ol{b}_{g+1}+\cdots+ \ol{b}_{2g})=1$. 
Then the maps
\begin{subeqn}\label{def_f1_f2}
\begin{aligned}
f_1 \colon C^g &\lto J\,,  \quad (c_1, \ldots , c_g) \longmapsto
\left[ \calO_C( c_1 + \cdots + c_g - (b_1 +\cdots +  b_g) + D) \right] \\ 
f_2 \colon C^g &\lto J\,, \quad (c_{1}, \ldots , c_{g}) \longmapsto
\left[ \calO_C( c_1 + \cdots + c_g - (b_{g+1} +\cdots + b_{2g}) + E)
\right]\,, 
\end{aligned}
\end{subeqn}
are \'etale respectively in the points
$( \ol{b_1},\ldots, \ol{b_g}) \in C^g(\F_p)$ and
$( \ol{b_{g+1}},\ldots, \ol{b_{2g}}) \in C^g(\F_p)$ and consequently give
bijections $C^g(\Zpp)_{(\ol{b_1},\ldots, \ol{b_g})} \to J(\Zpp)_{\ol D}$ and
$C^g(\Zpp)_{(\ol{b_{g+1}},\ldots, \ol{b_{2g}})} \to J(\Zpp)_{\ol E}$. For each
point $ c \in C(\F_p)$ we choose 
\begin{subeqn} \label{eq:par_choice}
  x_{D,c}\in \calO_C(-D)_c \text{ a generator}\,,
  \quad \text{and $x_c \in \calO_{C, c}$,}
\end{subeqn}
such that $p$ and $x_c$ generate the maximal ideal of~$\calO_{C, c}$.

For each $i =1, \ldots, 2g$ we choose $x_{b_i}$
so that $x_{b_i}(b_i)=0$. For each $(\Zpp)$-point $c \in C(\Zpp)$ with
image $\ol c$ in $C(\F_p)$ and for each $\lambda \in \F_p$ let
$c_\lambda$ be the unique point in $C(\Zpp)_{\ol c}$ with
$x_{\ol c}(c_\lambda) = \lambda p$. Then the map
$\lambda \mapsto c_{\lambda}$ is a bijection $\F_p \to C(\Zpp)_{\ol c}$
hence the maps $f_1, f_2$ induce bijections
\begin{subeqn}\label{def_Dlambda_Emu}
\begin{aligned}
\F_p^g & \lto J(\Zpp)_{\ol D}\,, \quad  \lambda \longmapsto D_\lambda := D +
(b_{1,\lambda_1}  - b_1) + \cdots + (b_{g,\lambda_g}  - b_g) \\
\F_p^g & \lto J(\Zpp)_{\ol E}\,, \quad  \mu \longmapsto E_\mu := E +
(b_{g+1,\mu_1}  - b_{g+1}) + \cdots + (b_{2g,\mu_g}  - b_{2g})\,.
\end{aligned}
\end{subeqn}
Hence $\calM^\times(\Zpp)_{\ol{D}, \ol{E}}$ is the union of
$\calM^\times(D_{\lambda}, E_{\mu})$ as $\lambda$ and $\mu$
vary in $\F_p^g$ and by Proposition~\ref{prop_P_and_M_and_norm} and
Remark~\ref{rem_no_base_point_deg_0} we have
\begin{subeqn}\label{eq:M_and_norms_modulo_p2}
\begin{aligned}
\calM(D_{\lambda}, E_{\mu})= & \Norm_{E^+/(\Zpp)}(\calO_C(D_\lambda
)) \otimes \Norm_{E^-/(\Zpp)}(\calO_C(D_\lambda))^{-1} \otimes \\ 
& \otimes  \bigotimes_{i=1}^g \left( b_{g+i,
    \mu_i}^*\calO_C(D_{\lambda}) \otimes  b_{g+i}^*
  \calO_C(D_{\lambda})^{-1} \right)\,. 
\end{aligned}
\end{subeqn}
For each $i\in\{1,\ldots, g\}$, $c \in C(\Zpp)$ and $\lambda \in \F_p$
we define $x_i(c,\lambda) := 1$ if $\ol{c}\neq\ol{b}_i$ and
$x_i(c,\lambda) := x_{b_i} - \lambda p$ if $\ol{c}=\ol{b}_i$, so that
$c^*x_i(c, \lambda)^{-1}$ generates $c^* \calO(b_{i,\lambda})$.  Then,
for each $c \in C(\Zpp)$ and each $\lambda \in \F_p^g$,
\begin{subeqn}\label{eq:gen_mod_p2_variable_part}
c^* \left( x_{D,c}^{-1} \cdot \prod_{i=1}^g  \frac{x_i(c,0)}{x_i(c,\lambda_i)} \right) \quad \text{generates }c^* \calO_C(D_\lambda) \,.
\end{subeqn}
We write $E^\pm = E^{0,\pm} + \cdots + E^{g,\pm}$ so that $E^{0, \pm}$
is disjoint from $\{\ol{b}_1,\ldots,\ol{b}_g\}$, and $E^{ i,\pm}$,
restricted to~$C_{\F_p}$, is supported on~$\ol{b}_i$. Let
$x_{D,E}$ be a generator of $\calO_{C}(-D)$ in a neighborhood of
$E^+ \cup E^-$. Then, for each $\lambda$ in $\F_p^g$,
\begin{subeqn}\label{eq:gen_mod_p2_constant_part}
\Norm_{E^{0,\pm}/(\Zpp)} (x_{D,E}^{-1}) \otimes \bigotimes_{i=1}^g
\Norm_{E^{i,\pm}/(\Zpp)}\left( x_{D,E}^{-1} \cdot
  \frac{x_{b_i}}{x_{b_i} - \lambda_i p} \right)  
\end{subeqn}
generates $\Norm_{E^{\pm}/(\Zpp)}(\calO_C(D_\lambda))$. By
(\ref{eq:M_and_norms_modulo_p2}), (\ref{eq:gen_mod_p2_variable_part})
and~(\ref{eq:gen_mod_p2_constant_part}) we see that, for $\lambda$ and
$\mu$ in~$\F_p^g$,
\begin{subeqn}\label{eq:def_s_DE_lambda}
\begin{aligned}
s_{D,E}(\lambda,& \mu) :=  \Norm_{E^{0,+}/(\Zpp)} (x_{D,E}^{-1})
\otimes \bigotimes_{i=1}^g \Norm_{E^{i,+}/(\Zpp)}\left( x_{D,E}^{-1}
  \cdot \frac{x_{b_i}}{x_{b_i} - \lambda_i p} \right) \otimes \\ 
&  \otimes \Norm_{E^{0,-}/(\Zpp)} (x_{D,E}^{-1})^{-1} \otimes
\bigotimes_{i=1}^g \Norm_{E^{i,-}/(\Zpp)}\left( x_{D,E}^{-1}
  \cdot \frac{x_{b_i}}{x_{b_i} - \lambda_i p} \right)^{-1}  \otimes \\ 
&\otimes  \bigotimes_{i = 1}^g \left( b_{g+i, \mu_i}^* \left(
    x_{D,b_{g+i}}^{-1} \cdot \prod_{j=1}^g  \frac{x_j(b_{g+i,
        \mu_i},0)}{x_j(b_{g+i, \mu_i},\lambda_j)} \right) \otimes
  b_{g+i}^* \left( x_{D,b_{g+i}}^{-1} \cdot \prod_{j=1}^g
    \frac{x_j(b_{g+i},0)}{x_j(b_{g+i},\lambda_j)} \right)^{-1} \right)  
\end{aligned}
\end{subeqn}
generates the free rank one $\Zpp$-module~$\calM(D_{\lambda}, E_{\mu})$.
The fibre $\calM^\times(\ol{D}, \ol{E})$ over $(\ol{D}, \ol{E})$ in
$(J\times J)(\F_p)$ is an $\F_p^\times$-torsor, containing
$\ol{s_{D,E}(0,0)}$, hence in bijection with $\F_p^\times$ by sending $\xi$ in
$\F_p^\times$ to $\xi{\cdot}\ol{s_{D,E}(0,0)}$. Using that
$(\Zpp)^\times = \F_p^\times \times (1 + p \F_p)$, we conclude the
following lemma.
\begin{subLemma}\label{lem:parametrization_residue_disk_M}
  With the assumptions and definitions from the start of
  Section~\ref{sec:parameters_on_P_mod_p2}, we have, for each
  $\xi \in \F_p^\times$, a parametrisation of the mod~$p^2$ residue
  polydisk of $\calM^\times$ at $\xi {\cdot}\ol{s_{D,E}(0,0)}$ by the
  bijection
\[
\begin{aligned}
\F_p^g \times \F_p^g \times \F_p \lto
\calM^\times(\Zpp)_{\xi{\cdot}\ol{s_{D,E}(0,0)}}  \,, \quad  (\lambda,
\mu, \tau) \longmapsto (1 + p\tau) {\cdot} \xi {\cdot} s_{D,E}
(\lambda, \mu) \,. 
\end{aligned}
\]
\end{subLemma}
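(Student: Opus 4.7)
The plan is to decompose the claimed bijection into three independent pieces: a parametrisation of the residue polydisk of $(J\times J)(\Zpp)$ over $(\ol{D},\ol{E})$ by $\F_p^g\times\F_p^g$; a computation showing that $\ol{s_{D,E}(\lambda,\mu)}$ is independent of $(\lambda,\mu)$; and the obvious $(1+p\F_p)$-torsor structure on a fibre of $\calM^\times(\Zpp)\to(J\times J)(\Zpp)$ over a fixed $\Zpp$-point.

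First I would verify that $\lambda\mapsto D_\lambda$ and $\mu\mapsto E_\mu$ of~(\ref{def_Dlambda_Emu}) are bijections. The hypothesis $h^0(C_{\F_p},\ol{b}_1+\cdots+\ol{b}_g)=1$ means that $\ol{b}_1+\cdots+\ol{b}_g$ is non-special, which is precisely the condition for $f_1\colon C^g\to J$ to be \'etale at $(\ol{b}_1,\ldots,\ol{b}_g)$ (the tangent map is the natural map $\bigoplus_i T_{\ol{b}_i}C\to T_0J=H^1(C_{\F_p},\calO)$, dual to $H^0(\Omega^1)\to\bigoplus_i(\Omega^1)_{\ol{b}_i}$, which is injective iff the divisor is non-special). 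By the functoriality at the end of Section~\ref{sec:formal_geometry}, $f_1$ therefore induces a bijection of residue polydisks, and the explicit formula $D_\lambda$ follows from our choice of parameters $x_{b_i}$ at $b_i$ used to identify $C(\Zpp)_{\ol{b}_i}\simeq\F_p$ via $b_{i,\lambda_i}$. The argument for $f_2$ is identical.

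Second I would show that the reduction $\ol{s_{D,E}(\lambda,\mu)}\in\calM^\times(\ol{D},\ol{E})(\F_p)$ does not depend on $(\lambda,\mu)$. Inspecting~(\ref{eq:def_s_DE_lambda}), every place where $\lambda$ enters is through a factor $x_{b_i}/(x_{b_i}-\lambda_i p)$, which is congruent to $1$ modulo $p$; every place where $\mu$ enters is through an evaluation at $b_{g+i,\mu_i}$, whose reduction mod $p$ coincides with evaluation at $\ol{b}_{g+i}$. Hence $\ol{s_{D,E}(\lambda,\mu)}=\ol{s_{D,E}(0,0)}$ in $\calM^\times(\ol{D},\ol{E})$ for all $(\lambda,\mu)\in\F_p^g\times\F_p^g$.

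Third I would assemble the bijection. Given $m\in\calM^\times(\Zpp)_{\xi\cdot\ol{s_{D,E}(0,0)}}$, the image of $m$ in $(J\times J)(\Zpp)$ lies in the polydisk over $(\ol{D},\ol{E})$, hence equals $(D_\lambda,E_\mu)$ for unique $(\lambda,\mu)\in\F_p^g\times\F_p^g$ by the first step. The fibre $\calM^\times(D_\lambda,E_\mu)$ is a free rank one $\Zpp$-module generated by $s_{D,E}(\lambda,\mu)$, so there is a unique $u\in(\Zpp)^\times$ with $m=u\cdot s_{D,E}(\lambda,\mu)$. Using $(\Zpp)^\times=\F_p^\times\times(1+p\F_p)$, write $u=\ol{u}\cdot(1+p\tau)$ uniquely. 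Reducing mod $p$ and applying step two, $\ol{u}\cdot\ol{s_{D,E}(0,0)}=\xi\cdot\ol{s_{D,E}(0,0)}$, so $\ol{u}=\xi$ and $m=(1+p\tau)\cdot\xi\cdot s_{D,E}(\lambda,\mu)$ with unique $(\lambda,\mu,\tau)$, giving both surjectivity and injectivity. The main (though very mild) obstacle is step one, namely the étaleness of the Abel-Jacobi map $f_1,f_2$ at the chosen points; everything else is bookkeeping in the torsor structure and a direct inspection of~(\ref{eq:def_s_DE_lambda}).
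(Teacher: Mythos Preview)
Your proof is correct and follows essentially the same approach as the paper. The paper's argument is the discussion immediately preceding the lemma (ending with ``Using that $(\Zpp)^\times = \F_p^\times \times (1 + p\F_p)$, we conclude the following lemma''): it establishes the bijections~(\ref{def_Dlambda_Emu}) via \'etaleness of $f_1,f_2$, constructs the generator $s_{D,E}(\lambda,\mu)$ of $\calM(D_\lambda,E_\mu)$, and then invokes the splitting of~$(\Zpp)^\times$. You make explicit one point the paper leaves implicit, namely that $\ol{s_{D,E}(\lambda,\mu)}=\ol{s_{D,E}(0,0)}$ for all $(\lambda,\mu)$; your inspection of~(\ref{eq:def_s_DE_lambda}) is the right way to see this directly (and the paper effectively recovers the same fact in the subsequent Lemma~\ref{lem:par_res_disk_M_by_parameters} by exhibiting a single section $s$ with $s_{D,E}(\lambda,\mu)=s(b_{1,\lambda_1},\ldots,b_{2g,\mu_g})$).
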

Using this parametrization it easy to describe the two partial group
laws on $\calM^\times(\Zpp)$ when one of the two points we are summing
lies over $(\ol D, \ol E)$ and the other lies over $(\ol D,0)$ or
$(0, \ol E)$.  To compute the group law in $J(\Zpp)$ we notice that
for each $c \in C(\Zpp)$ such that $x_c(c)=0$ and for each
$\lambda, \mu \in \F_p$ we have
\begin{subeqn}\label{eq:rat_fun:life_mod_p2_is_easy}
\frac{x_c^2}{(x_c {-} \lambda p)(x_c {-} \mu p)} = \frac{x_c^2}{x_c^2
  - \lambda p x_c - \mu p x_c} =
\frac{x_c}{x_c - (\lambda {+} \mu)p} 
\end{subeqn}
and since these rational functions generate
$\calO_C( c_{\lambda} - c + c_{\mu} - c )$ and
$\calO_C( c_{\lambda + \mu} - c )$ in a neighborhood of~$c$, we have
the \emph{equality} of relative Cartier divisors on~$C$
\begin{subeqn}\label{eq:div:life_mod_p2_is_easy}
(c_{\lambda} - c) + (c_{\mu} - c) = c_{\lambda + \mu} - c \,.
\end{subeqn}
Hence, under the definition for $\lambda \in \F_p^g$ of 
\begin{subeqn}
D_{\lambda}^0 := (b_{1,\lambda_1}-b_1) +\cdots+ (b_{g,\lambda_g}-b_g) \,,
\quad
E_{\lambda}^0 := (b_{g+1,\lambda_1}-b_{g+1})+\cdots+
(b_{2g,\lambda_g}-b_{2g}) \,, 
\end{subeqn}
we have, for all $\lambda, \mu \in \F_p^g$, that 
$D_{\lambda} + D_\mu^0 = D_{\lambda + \mu}$ and
$E_{\lambda} + E_\mu^0 = E_{\lambda +
  \mu}$. Definition~\ref{eq:def_s_DE_lambda}, applied with $(D,0)$ and
$(0,E)$, with $x_{0,E}=1$ and, for every
$c \in C(\F_p)$, with $x_{0,c} = 1$, gives, for all $\lambda , \mu$
in~$\F_p^g$, the elements
\begin{subeqn}
s_{D, 0}(\lambda, \mu) \in \calM^\times(D_\lambda, E_\mu^0) \,, \quad 
s_{0, E}(\lambda, \mu) \in \calM^\times(D_\lambda^0, E_\mu) \,.
\end{subeqn}
With these definitions, we have the following lemma for the partial
group laws of~$\calM$.
\begin{subLemma}\label{lem:part_group_laws_res_disks_M}
With the assumptions and definitions from the start of
Section~\ref{sec:parameters_on_P_mod_p2}, we have,
for all $\lambda, \lambda_1, \lambda_2, \mu, \mu_1, \mu_2$
in~$\F_p^g$, that
\[
\begin{aligned}
s_{D, 0}(\lambda, \mu_1) +_2 s_{D,E}(\lambda, \mu_2) & = s_{D,
  0}(\lambda, \mu_1) \otimes s_{D,E}(\lambda, \mu_2) =
s_{D,E}(\lambda,\mu_1 + \mu_2) \\ 
s_{0, E}(\lambda_1, \mu) +_1 s_{D,E}(\lambda_2, \mu) & = s_{D,
  0}(\lambda_1, \mu) \otimes s_{D,E}(\lambda_2, \mu) =
s_{D,E}(\lambda_1 + \lambda_2,\mu) \,, 
\end{aligned}
\]
and, consequently, for all $\tau_1,\tau_2\in\F_p$ and
$\xi_1,\xi_2\in\F_p^\times$, that
\begin{subeqn}\label{eq:explicit_partial_group_laws}
\begin{aligned}
\xi_1(1 {+} \tau_1 p ) {\cdot} s_{D, 0}(\lambda, \mu_1) +_2 \xi_2(1
{+} \tau_2 p ) {\cdot} s_{D,E}(\lambda, \mu_2) & =\xi_1(1 {+} \tau_1 p
) \xi_2(1 {+} \tau_2 p ) {\cdot} s_{D,E}(\lambda,\mu_1 {+} \mu_2) \\ 
& = \xi_1\xi_2(1 {+} (\tau_1 {+} \tau_2)p ) {\cdot}
s_{D,E}(\lambda,\mu_1 {+} \mu_2) \,,  \\ 
\xi_1(1 {+} \tau_1 p ) {\cdot} s_{0, E}(\lambda_1, \mu) +_1 \xi_2(1
{+} \tau_2 p ) {\cdot} s_{D,E}(\lambda_2, \mu)  
& = \xi_1 \xi_2 (1 {+} (\tau_1 {+} \tau_2)p) {\cdot} s_{D,E}(\lambda_1
{+} \lambda_2,\mu) \,. 
\end{aligned}
\end{subeqn}
\end{subLemma}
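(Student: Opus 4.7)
The plan is to reduce the lemma to two ingredients already on the table: the construction of the two partial group laws $+_1$, $+_2$ on $\calM^\times$ via norms (as in~(\ref{eq:plus_1_N}) and~(\ref{eq:plus_2_N})), and the rational-function identity~(\ref{eq:rat_fun:life_mod_p2_is_easy}). I would prove the two stated identities without the scalars $\xi_i$, $\tau_i$ first, and then deduce the scaled identities~(\ref{eq:explicit_partial_group_laws}) at the end.

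For the first (unscaled) identity, I would begin by noting that both sides lie in the same rank-one $\Zpp$-module $\calM(D_\lambda, E_{\mu_1+\mu_2})$: the right-hand side by definition, and the left-hand side because the $+_2$-sum lies over $(D_\lambda, E_{\mu_1}^0 + E_{\mu_2})$, and $E_{\mu_1}^0 + E_{\mu_2} = E_{\mu_1+\mu_2}$ as relative Cartier divisors by~(\ref{eq:div:life_mod_p2_is_easy}). The first displayed equality, identifying $+_2$ with the plain tensor product, is then immediate from Lemma~\ref{lem_norms} combined with~(\ref{eq:plus_2_N}) and Remark~\ref{rem_no_base_point_deg_0}, which express $+_2$ on $\calM^\times$ as multiplicativity of $\Norm$ in the second argument.

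The second equality requires unpacking the explicit formula~(\ref{eq:def_s_DE_lambda}) factor by factor. The factors $\Norm_{E^{0,\pm}}$ and $\Norm_{E^{i,\pm}}$ depend only on $D$, $E$ and $\lambda$, and not on $\mu$; hence the norm-factors of $s_{D,E}(\lambda,\mu_2)$ already coincide with those of $s_{D,E}(\lambda,\mu_1+\mu_2)$, while $s_{D,0}(\lambda,\mu_1)$ contributes none (since $E=0$ there gives $E^\pm=0$). For the remaining pullback factors at the points $b_{g+i}$ and $b_{g+i,\mu_i}$, the matching boils down to verifying that, under the canonical identification $b_{g+i,\mu_{1,i}}^*\calO_C(D_\lambda)\otimes b_{g+i,\mu_{2,i}}^*\calO_C(D_\lambda) = b_{g+i,\mu_{1,i}+\mu_{2,i}}^*\calO_C(D_\lambda)$ coming from~(\ref{eq:div:life_mod_p2_is_easy}), the tensor product of the two generators chosen in~(\ref{eq:def_s_DE_lambda}) transports to the single generator chosen there for $\mu_1+\mu_2$. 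This is exactly where~(\ref{eq:rat_fun:life_mod_p2_is_easy}) does the work, as it is the underlying equality of rational functions producing~(\ref{eq:div:life_mod_p2_is_easy}).

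The $+_1$ identity is proved by the completely symmetric argument, interchanging the roles of the two factors and invoking~(\ref{eq:plus_1_N}) in place of~(\ref{eq:plus_2_N}); alternatively, Lemma~\ref{lem:M_symmetry} exchanges the two statements. The scaled identities~(\ref{eq:explicit_partial_group_laws}) then follow from the biextension structure: $+_1$ and $+_2$ are morphisms of $\Gm$-torsors in each argument separately, so scaling both summands by units $\xi_i(1+\tau_i p)\in (\Zpp)^\times$ multiplies the output by their product, and $(1+\tau_1 p)(1+\tau_2 p)=1+(\tau_1+\tau_2)p$ in $\Zpp$. The main obstacle is pure bookkeeping: no new geometric input is needed beyond Lemma~\ref{lem_norms}, Remark~\ref{rem_no_base_point_deg_0}, and the identity~(\ref{eq:rat_fun:life_mod_p2_is_easy}), but carefully tracking the many tensor factors of~(\ref{eq:def_s_DE_lambda}) across the partial group law requires care.
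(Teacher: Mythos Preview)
Your proposal is correct and follows essentially the same approach as the paper's proof, which simply cites~(\ref{eq:rat_fun:life_mod_p2_is_easy}), (\ref{eq:div:life_mod_p2_is_easy}), and the equivalences of~(\ref{eq:plus_2_M}) with~(\ref{eq:plus_2_N}) and of~(\ref{eq:plus_1_M}) with~(\ref{eq:plus_1_N}) from Proposition~\ref{prop_P_and_M_and_norm}. You have spelled out the factor-by-factor bookkeeping and the torsor argument for the scalars in more detail than the paper does, but the underlying logic is the same.
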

\begin{proof}
This follows from~(\ref{eq:rat_fun:life_mod_p2_is_easy}) and
(\ref{eq:div:life_mod_p2_is_easy}), together with the equivalence of
(\ref{eq:plus_2_M}) and~(\ref{eq:plus_2_N}) and the equivalence of
(\ref{eq:plus_1_M}) and~(\ref{eq:plus_1_N})  in
Proposition~\ref{prop_P_and_M_and_norm}. 
\end{proof}
We end this section with one more lemma. 
\begin{subLemma}\label{lem:par_res_disk_M_by_parameters}
The parametrization in
Lemma~\ref{lem:parametrization_residue_disk_M} is the inverse of a 
bijection given by parameters on $\calM^\times$ analogously
to~(\ref{eq:param_bijection}). 
\end{subLemma}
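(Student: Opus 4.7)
The plan is to exhibit a system of parameters $p, t_1, \ldots, t_{2g+1}$ at the lift $\alpha_0 := \xi \cdot s_{D,E}(0,0) \in \calM^\times(\Zpp)$ of $\xi \cdot \ol{s_{D,E}(0,0)}$ (after fixing a lift of $\xi$ to $\Zpp^\times$), and to check that the associated bijection $\tilde{t}$ of~(\ref{eq:param_bijection}) is inverse to the parametrization $\phi$ of Lemma~\ref{lem:parametrization_residue_disk_M}.

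For the $2g$ parameters in the base directions, I would use that $f_1, f_2$ in~(\ref{def_f1_f2}) are \'etale at the chosen $\F_p$-points (by non-specialty), yielding isomorphisms on completed local rings that transport the projections $\pr_i^* x_{b_i}$ on $C^g$ to $u_i \in \calO_{J, \ol D}$, and $x_{b_{g+j}}$ to $v_j \in \calO_{J^{\vee 0}, \ol E}$, with $(p, u_1, \ldots, u_g)$ and $(p, v_1, \ldots, v_g)$ systems of parameters at $\ol D$ and $\ol E$ respectively. By the definitions of $b_{i, \lambda_i}$ and $D_\lambda$, one has $u_i(D_\lambda) = x_{b_i}(b_{i, \lambda_i}) = \lambda_i p$, so $\tilde u_i(\phi(\lambda, \mu, \tau)) = \lambda_i$; analogously $\tilde v_j(\phi(\lambda, \mu, \tau)) = \mu_j$.

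For the torsor direction, since $\calM^\times \to J \times J^{\vee 0}$ is a Zariski-locally trivial $\Gm$-torsor, I pick a Zariski open $V$ containing $(\ol D, \ol E)$ together with a section $s \in \calM^\times(V)$ normalised so that $s(D, E) = \alpha_0$; this provides a coordinate $u$ on $\calM^\times|_V$ satisfying $\alpha = u(\alpha) \cdot s$ and $u(\alpha_0) = 1$, and I set $w := u - 1$. The elements $p, u_1, \ldots, u_g, v_1, \ldots, v_g, w$ (pulled back to $\calM^\times$) form a system of parameters at $\alpha_0$. Computing, $u(\phi(\lambda, \mu, \tau)) = (1+p\tau)\, \xi\, s_{D,E}(\lambda, \mu) / s(D_\lambda, E_\mu)$; since $\ol{s_{D,E}(\lambda, \mu)} = \ol{s_{D,E}(0,0)}$ in $\calM^\times(\F_p)$ (each factor in~(\ref{eq:def_s_DE_lambda}) reduces modulo $p$ to its value at $\lambda = \mu = 0$) and $s(D_\lambda, E_\mu)$ lifts the same class, the ratio $\xi\, s_{D,E}(\lambda, \mu)/s(D_\lambda, E_\mu)$ equals $1 + p\,\eta(\lambda, \mu)$ for some $\eta(\lambda, \mu) \in \F_p$, giving $\tilde w(\phi(\lambda, \mu, \tau)) = \tau + \eta(\lambda, \mu)$.

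The main obstacle is to resolve this residual $\eta$. Inspection of~(\ref{eq:def_s_DE_lambda}) shows that the $\lambda_i, \mu_j$ enter only through factors of the form $x_{b_\bullet} - \lambda_\bullet p$ or $x_{b_\bullet} - \mu_\bullet p$, so any product like $\lambda_i \lambda_j$, $\mu_i \mu_j$ or $\lambda_i \mu_j$ carries a factor $p^2$ and vanishes modulo $p^2$. Hence $\eta$ is $\F_p$-affine in $(\lambda, \mu)$, and since $\eta(0,0) = 0$ by construction it has the form $\eta(\lambda, \mu) = \sum_i a_i \lambda_i + \sum_j b_j \mu_j$ for constants $a_i, b_j \in \F_p$. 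Replacing $w$ by $w' := w - p \sum_i a_i u_i - p \sum_j b_j v_j$ (still a system of parameters together with $p, u_i, v_j$, since the modification lies in the maximal ideal) yields $\tilde w'(\phi(\lambda, \mu, \tau)) = \tau$, so $\phi^{-1} = (\tilde u_1, \ldots, \tilde u_g, \tilde v_1, \ldots, \tilde v_g, \tilde w')$, establishing the lemma.
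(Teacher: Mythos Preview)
Your approach differs from the paper's: rather than constructing an explicit local section whose values recover $s_{D,E}(\lambda,\mu)$, you pick an arbitrary trivializing section $s$ and try to absorb the discrepancy $\eta$ into a linear change of the last parameter. The paper instead pulls $\calM$ back along $f_1\times f_2$ to a line bundle $\calQ$ on $C^g\times C^g$, decomposes $\calQ$ as a tensor product (equation~(\ref{eq:justifying_parameters})), and exhibits for each factor an explicit local trivializing section whose value at $(b_{1,\lambda_1},\ldots,b_{2g,\mu_g})$ is the corresponding factor of~(\ref{eq:def_s_DE_lambda}). Tensoring these gives a section $s$ of $\calQ$ with $s_{D,E}(\lambda,\mu)=s(b_{1,\lambda_1},\ldots,b_{2g,\mu_g})$ on the nose, so no correction term $\eta$ appears.

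There is a genuine gap in your argument: the linearity of $\eta$ is the whole point, and your justification does not establish it. First, the description of how $(\lambda,\mu)$ enter is not accurate: the $\mu_i$ enter through the evaluation points $b_{g+i,\mu_i}$ in the third line of~(\ref{eq:def_s_DE_lambda}), not through factors of the shape $x_{b_\bullet}-\mu_\bullet p$. More seriously, the heuristic ``each variable appears with a factor~$p$, so quadratic terms carry~$p^2$'' only makes sense for a scalar-valued expression. Here $s_{D,E}(\lambda,\mu)$ lives in the varying $\Zpp$-module $\calM(D_\lambda,E_\mu)$, and to form the ratio with $s(D_\lambda,E_\mu)$ you are implicitly comparing two families of elements in varying fibres. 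The statement that this ratio is $1+p\cdot(\text{linear})$ is equivalent to saying that $(\lambda,\mu)\mapsto s_{D,E}(\lambda,\mu)$ is itself the evaluation of some regular local section of $\calM$ (or of $\calQ$) at the parametrized point: if $s_{D,E}=s'(\cdot)$ for regular $s'$, then $s'/s$ is a regular function whose reduction on the residue polydisk is affine by the general mechanism of Section~\ref{sec:formal_geometry}. But that is exactly what the paper's explicit construction supplies and what your argument omits. Without it, $\eta$ is a priori an arbitrary map $\F_p^{2g}\to\F_p$, and if it had a nonlinear term no element $w'\in m_{\alpha_0}$ could give $\tilde{w}'=\tau$, since $\tilde{w}'$ mod~$p$ is always of degree~$\leq 1$ in the~$\tilde u_i,\tilde v_j$.

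There is also a computational slip: your corrected parameter should read $w'=w-\sum_i a_i u_i-\sum_j b_j v_j$, without the factor~$p$. With the extra~$p$ one has $p\,u_i(\phi(\lambda,\mu,\tau))=p\cdot\lambda_i p=0$ in~$\Zpp$, so the modification is vacuous and $\tilde{w}'$ would still equal $\tau+\eta$.
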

\begin{proof}
Let $\calQ$ be the pullback of $\calM$
by~$f_1 {\times} f_2$ with $f_1$ and $f_2$ as
in~(\ref{def_f1_f2}). Then the lift
$\wt{f_1 {\times} f_2}\colon \calQ^\times \to \calM^\times$ is
\'etale at any point $\beta\in\calQ(\F_p)$ lying over
$\ol{b}=(b_1,\ldots,b_{2g})\in (C^{2g})(\F_p)$ and induces a bijection
between $\calQ^\times(\Zpp)_{\ol{b}}$
and~$\calM^\times(\Zpp)_{(\ol D, \ol E)}$. In particular we can
interpret $s_{D,E}(\lambda, \mu)$ as a section of
$\calQ(b_{1, \lambda_1}, \ldots b_{2g, \mu_g})$ and we can interpret
the parametrization in Lemma~\ref{lem:parametrization_residue_disk_M}
as a parametrization 
of~$Q^\times(\Zpp)_{\xi \ol{s_{D,E}(0,0)}}$. It is then enough to
prove that the parametrization in
Lemma~\ref{lem:parametrization_residue_disk_M} is the inverse of 
a bijection given by parameters on~$\calQ^\times$. It comes from the
definition of $c_\nu$ for $c\in C(\Zpp)$ and $\nu\in\F_p$, that the
maps $\lambda_i\, \mu_i \colon C^{2g}(\Zpp)_{\ol b} \to \F_p$ are given by
parameters in~$\calO_{C^{2g}, \ol b}$ divided by~$p$. In order to see
that also the coordinate
$\tau \colon \calQ^\times(\Zpp)_{\xi s_{D,E}(0,0)}\to \F_p$ is given by
a parameter divided by~$p$ it is enough to prove that there is an open
subset $U\subset C^{2g}$ containing $\ol b$ and a section $s$
trivializing $\calQ|_U$ such that
$s_{D,E}(\lambda, \mu) = s(b_{1,\lambda_1},\ldots, b_{2g, \mu_g})$.
Remark~\ref{rem_no_base_point_deg_0} and~(\ref{eq:norm_symm_isom})
give that
\begin{subeqn}\label{eq:justifying_parameters}
\begin{aligned}
\calQ =& \bigotimes_{i,j=1}^g   \Big( (\pi_{i}, \pi_{g+j})^*\calO_{C
  \times C}( \Delta) \Big)\\
& \otimes \bigotimes_{i=1}^g \Big( \pi_{i}^*
\calO_C(E-(b_{g+1} +\cdots+b_{2g}))
\otimes \pi_{g+i}^* \calO_C(D-(b_1+\cdots+b_g)) \Big)  \\ 
& \otimes \Norm_{E/\Zpp} (\calO_C(D-(b_1+\cdots+b_g)))  \otimes
\bigotimes_{i=1}^g  b_{g+i}^*\calO_C(D-(b_1+\cdots+ b_g))^{-1}  
\end{aligned} 
\end{subeqn}
where $\Delta \subset C {\times} C$ is the diagonal and $\pi_i$ is the
$i$-th projection $C^g \times C^g \to C$.  We can prove that there is
an open subset $U\subset C^g{\times} C^g$ containing $b$ and a section
$s$ trivializing $\calQ|_U$ such that
$s_{D,E}(\lambda, \mu ) = s(b_{1,\lambda_1},\ldots, b_{2g, \mu_g})$,
by trivializing each factor of the above tensor product in a
neighborhood of~$b$. Let us see it, for example, for the pieces of the
form $(\pi_i, \pi_{g+j})^*\calO_{C \times C}(\Delta)$.  Let
$\pi_1, \pi_2$ be the two projections $C \times C \to C$ and let us
consider the divisor~$\Delta$: for each pair of points
$c_1, c_2\in C(\F_p)$ the invertible $\calO$-module
$\calO_{C\times C}(-\Delta)$ is generated by the section
$x_{\Delta, c_1, c_2} : =1$ in a neighborhood of $(c_1,c_2)$ if
$c_1 \neq c_2$, while it is generated by the section
$x_{\Delta, c_1, c_2}:= \pi_1^* x_{ c_1} - \pi_2 ^* x_{ c_2}$ in a
neighborhood of $(c_1,c_2)$ if $ c_1 = c_2$. If we now take
$c_1 = b_i, c_2 = b_{g+j} \in C(\F_p)$ we deduce there is a
neighborhood $U$ of $(b_i, b_{g+j})$ such that
$x_{\Delta, b_i, b_{g+j}}^{-1}$ generates
$\calO_{C \times C}(\Delta)|_U$. For each $\lambda, \mu \in \F_p^g$
the point $(b_{i,\lambda_i},b_{g+j, \mu_j})$ lies in $U(\Zpp)$ and the
canonical isomorphism
$(b_{i,\lambda_i},b_{g+j, \mu_j})^*\calO_{C \times C}(\Delta) =
b_{g+j, \mu_j}^* \calO(b_{i, \lambda_i})$ sends the generating section
$(b_{i,\lambda_i},b_{j, \mu_j})^*x_{\Delta, c_1, c_2}^{-1}$ to
$b_{j, \mu_j}^* x_{i}(b_{g+j}, \lambda_i)^{-1}$, which is a factor in
(\ref{eq:def_s_DE_lambda}). This gives a section $s_{i,j}$
trivializing
$\Big( (\pi_{i}, \pi_{g+j})^*\calO_{C \times C}( \Delta) \Big)$ in a
neighborhood of $b$. With similar choices we can find sections
trivializing the other factors in~(\ref{eq:justifying_parameters}) in
a neighborhood of $b$ and tensoring all such sections we get a section
$s$ such that
$s_{D,E}(\lambda, \mu ) = s(b_{1,\lambda_1},\ldots, b_{2g, \mu_g})$.
\end{proof}

\subsection{Extension of the Poincar\'e biextension over N\'eron models}
\label{sec:extension_Poincare_Neron}

Let $C$ over $\Z$ be a curve as in Section~\ref{sec:alg_geometry}. Let
$q$ be a prime number that divides~$n$. We also write $C$
for~$C_{\Z_q}$. Let $J$ be the N\'eron model over $\Z_q$ of
$\Pic^0_{C/\Q_q}$, and $J^0$ its fibre-wise connected component
of~$0$. On $(J\times_{\Z_q} J)_{\Q_q}$ we have $\calM$ as in
Proposition~\ref{prop_P_and_M_and_norm}, rigidified at
$0\times J_{\Q_q}$ and at~$J_{\Q_q}\times 0$.
\begin{subProposition}
  The invertible $\calO$-module $\calM$ on
  $(J\times_{\Z_q} J)_{\Q_q}$, with its rigidifications, extends
  uniquely to an invertible $\calO$-module $\wt{\calM}$ with
  rigidifications on~$J\times_{\Z_q}J^0$. The biextension structure on
  $\calM^\times$ extends uniquely to a biextension structure
  on~$\wt{\calM}^\times$. 
\end{subProposition}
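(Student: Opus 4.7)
The plan is to realise $\wt{\calM}$ as a pullback of the already-extended Poincar\'e biextension $P^\times\to J\times J^{\vee 0}$ of~(\ref{eq:P_extended}) along an integral model of the polarization. Recall from~(\ref{eq:lambda_and_jb}) that on the generic fibre, $j_b^{*,-1}=-\lambda_{\Q_q}\colon J_{\Q_q}\to J^\vee_{\Q_q}$, so $\calM=(\id\times(-\lambda_{\Q_q}))^*P$, and in particular the biextension $\calM^\times$ on $(J\times J)_{\Q_q}$ is the pullback of the biextension $P^\times$.

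First, by the N\'eron mapping property applied to the homomorphism $-\lambda_{\Q_q}\colon J_{\Q_q}\to J^\vee_{\Q_q}$ (the target being smooth with N\'eron model $J^\vee$), there is a unique extension to a homomorphism of group schemes $\mu\colon J\to J^\vee$ over~$\Z_q$. Any such homomorphism sends the fibrewise identity-component to the fibrewise identity-component, so $\mu$ restricts to $\mu\colon J^0\to J^{\vee 0}$. I would then define
\[
\wt{\calM}^\times:=(\id_J\times\mu)^*P^\times\,,
\]
and let $\wt{\calM}$ be the corresponding line bundle. Since pullback of a biextension along a morphism that is a group homomorphism in each variable is again a biextension, $\wt{\calM}^\times$ inherits a $\Gm$-biextension structure on $J\times J^0$. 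The rigidifications of $P^\times$ along $\{0\}\times J^{\vee 0}$ and $J\times\{0\}$ pull back to rigidifications of $\wt{\calM}^\times$ along $\{0\}\times J^0$ and~$J\times\{0\}$, restricting on the generic fibre to the given rigidifications of $\calM^\times$. This establishes existence of the extension.

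For uniqueness of $\wt{\calM}$ with its rigidifications, the key remark is that $J\times_{\Z_q}J^0$ is smooth, hence regular, over~$\Z_q$, with the generic fibre a dense open whose complement is a union of codimension-one irreducible divisors (the irreducible components of the special fibre). If $\wt{\calM}_1$ and $\wt{\calM}_2$ are two such rigidified extensions, then $\calE:=\wt{\calM}_1\otimes\wt{\calM}_2^{-1}$ is an invertible sheaf trivial on the generic fibre, hence of the form $\calO(D)$ for an integral divisor $D$ supported on the special fibre. The special-fibre components of $J\times J^0$ are precisely $(\text{component of }J_{\F_q})\times J^0_{\F_q}$ (using that $J^0_{\F_q}$, being a smooth connected group scheme over a field, is irreducible), and each such component is met by the zero section $J\times\{0\}$; the triviality of the rigidification of $\calE$ along this section then forces all multiplicities of $D$ to vanish, yielding $\calE\cong\calO$ with compatible rigidifications. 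Uniqueness of the biextension structure is then immediate: the two partial group laws $+_1$ and $+_2$ are morphisms of $\Z_q$-schemes that agree on the schematically dense generic fibre of a separated scheme, hence agree globally. The main potential obstacle is the claim that the zero section meets every irreducible component of $(J\times J^0)_{\F_q}$, which relies on the irreducibility of $J^0_{\F_q}$ noted above; this is why restricting the second factor to $J^0$ (rather than all of $J$) is essential.
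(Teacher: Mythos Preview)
Your existence argument is circular. You invoke the already-extended $P^\times\to J\times J^{\vee 0}$ from~(\ref{eq:P_extended}), but the paper's forward reference there (``For details see Section~\ref{sec:extension_Poincare_Neron}'') points exactly to the section containing this Proposition. Since $\lambda\colon J\to J^\vee$ is an \emph{isomorphism} over~$\Z_q$ (so your $\mu$ is simply $-\lambda$, no N\'eron mapping property needed), and hence $-\lambda\colon J^0\to J^{\vee 0}$ is an isomorphism too, extending $P^\times$ over $J\times J^{\vee 0}$ and extending $\calM^\times$ over $J\times J^0$ are literally the same statement transported along an isomorphism of the base; pulling one back from the other proves nothing. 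If instead you intend to take Grothendieck's SGA7~VIII Theorem~7.1 as an independent black box, your argument becomes logically valid but content-free, and misses the purpose of this section, which is to supply the elementary proof promised after~(\ref{eq:P_extended}).

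The paper's proof does not use~(\ref{eq:P_extended}). It argues directly: $J\times_{\Z_q} J^0$ is regular, so any invertible $\calO$-module on its generic fibre extends (Weil $=$ Cartier; close up a divisor). One then corrects this extension by the unique vertical divisor making the rigidification along $J\times\{0\}$ extend; the irreducible components of the special fibre are the $D_i\times_{\F_q} J^0_{\F_q}$, each met by $J\times\{0\}$, which pins the correction down. Your uniqueness paragraph is exactly this argument and is correct --- you should reuse it for existence rather than invoke~(\ref{eq:P_extended}). The biextension isomorphisms are then extended by the same rigidification-forces-zero-vertical-divisor trick applied on the triple products.
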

\begin{proof}
  First of all, $J\times_{\Z_q}J^0$ is regular, hence Weil divisors
  and Cartier divisors are the same, and every invertible
  $\calO$-module on $(J\times_{\Z_q}J^0)_{\Q_q}$ has an extension to
  an invertible $\calO$-module on $J\times_{\Z_q}J^0$. So let $\calM'$
  be an extension of~$\calM$. Any extension $\calM''$ of $\calM$ is
  then of the form $\calM'(D)$, with $D$ a divisor on
  $J\times_{\Z_q}J^0$ with support in
  $(J\times_{\Z_q}J^0)_{\F_q}$. Such $D$ are $\Z$-linear combinations
  of the irreducible components of the $D_i\times_{\F_q}J^0_{\F_q}$,
  where the $D_i$ are the irreducible components of~$J_{\F_q}$. Now
  $\calM'|_{J\times 0}$ extends $\calM|_{J_{\Q_q}\times 0}$, hence the
  rigidification of $\calM|_{J_{\Q_q}\times 0}$ is a rational section
  of $\calM'|_{J\times 0}$ whose divisor is a $\Z$-linear combination
  of the~$D_i$. It follows that there is exactly one $D$ as above such
  that the rigidification of $\calM$ extends to a rigidification
  of~$\calM'(D)$ on $J\times 0$. That rigidification is compatible
  with a unique rigidification of $\calM'(D)$ on $0\times J^0$. We
  denote this extension $\calM'(D)$ of $\calM$ to $J\times_{\Z_q} J^0$
  by~$\wt{\calM}$.

  Let us now prove that the $\Gm$-torsor $\wt{\calM}^\times$ on
  $J\times_{\Z_q} J^0$ has a unique biextension structure, extending
  that of~$\calM^\times$. Over $J\times_{\Z_q}J\times_{\Z_q}J^0$ we
  have the invertible $\calO$-modules whose fibres, at a point
  $(x,y,z)$ (with values in some $\Z_q$-scheme) are
  $\wt{\calM}(x+y,z)$ and $\wt{\calM}(x,z)\otimes\wt{\calM}(y,z)$. The
  biextension structure of~$\calM^\times$ gives an isomorphism between
  the restrictions of these over~$\Q_q$, that differs from an
  isomorphism over $\Z_q$ by a divisor with support over~$\F_q$. The
  compatibility with the rigidification of $\wt{\calM}$ over
  $J\times_{\Z_q}0$ proves that this divisor is zero. The other
  partial group law, and the required properties of them follow in the
  same way. We have now shown that $\wt{\calM}^\times$ extends the
  biextension~$\calM^\times$.
\end{proof}

\subsection{Explicit description of the extended Poincar\'e bundle}
\label{sec:explicit_P_over_Zq}
Let $C$ over $\Z$ be a curve as in Section~\ref{sec:alg_geometry}. Let
$q$ be a prime number that divides~$n$. We also write $C$
for~$C_{\Z_q}$. By~\cite{Liu}, Corollary~9.1.24, $C$ is
cohomologically flat over~$\Z_q$, which means that for all
$\Z_q$-algebras~$A$, $\calO(C_A)=A$. Another reference for this
is~\cite{Ray}, (6.1.4), (6.1.6) and~(7.2.1). 

The relative Picard functor $\Pic_{C/\Z_q}$ sends a $\Z_q$-scheme $T$
to the set of isomorphism classes of $(\calL,\rig)$ with $\calL$ an
invertible $\calO$-module on $C_T$ and $\rig$ a rigidification
at~$b$. By cohomological flatness, such objects are rigid. But if the
action of $\Gal(\Fbar_q/\F_q)$ on the set of irreducible components
of~$C_{\Fbar_q}$ is non-trivial, then $\Pic_{C/\Z_q}$ is not
representable by a $\Z_q$-scheme, only by an algebraic space
over~$\Z_q$ (see~\cite{Ray}, Proposition~5.5). Therefore, to not be
annoyed by such inconveniences, we pass to $S:=\Spec(\Z_q^\unr)$, the
maximal unramified extension of~$\Z_q$. Then $\Pic_{C/S}$ is
represented by a smooth $S$-scheme, and on $C\times_S\Pic_{C/S}$ there
is a universal pair $(\calL^\univ,\rig)$ (\cite{Ray}, Proposition~5.5,
and Section~8.0). We note that $\Pic_{C/S}\to S$ is separated if and
only if $C_{\Fbar_q}$ is irreducible.

Let $\Pic^{[0]}_{C/S}$ be the open part of $\Pic_{C/S}$ where
$\calL^\univ$ is of total degree zero on the fibres of $C\to S$. It contains
the open part $\Pic^0_{C/S}$ where $\calL^\univ$ has degree zero on
all irreducible components of~$C_{\Fbar_q}$.

Let $E$ be the closure of the $0$-section of $\Pic_{C/S}$, as
in~\cite{Ray}. It is contained in~$\Pic^{[0]}_{C/S}$.  By~\cite{Ray},
Proposition~5.2, $E$ is represented by an $S$-group scheme, \'etale.

By~\cite{Ray}, Theorem~8.1.4, or \cite{B-L-R}, Theorem~9.5.4, the
tautological morphism $\Pic^{[0]}_{C/S}\to J$ is surjective (for the
\'etale topology) and its kernel is~$E$, and so
$J=\Pic^{[0]}_{C/S}/E$. Also, the composition
$\Pic^0_{C/S}\to \Pic^{[0]}_{C/S} \to J$ induces an isomorphism
$\Pic^0_{C/S}\to J^0$.

Let $C_i$, $i\in I$, be the irreducible components
of~$C_{\Fbar_q}$. Then, as divisors on $C$, we have
\begin{subeqn}\label{eq:spec_fib_divisor}
C_{\Fbar_q} = \sum_{i\in I} m_iC_i\,.
\end{subeqn}
For $\calL$ an invertible $\calO$-module on $C_{\Fbar_q}$, its
multidegree is defined as
\begin{subeqn}\label{eq:multidegree}
\multideg(\calL)\colon I\to\Z,\quad i\mapsto \deg_{C_i}(\calL|_{C_i})\,,
\end{subeqn}
and its total degree is then
\begin{subeqn}\label{eq:totaldeg}
\deg(\calL) = \sum_{i\in I} m_i \deg_{C_i}(\calL|_{C_i})\,.
\end{subeqn}
The multidegree induces a surjective morphism of groups
\begin{subeqn}
\multideg\colon \Pic_{C/S}(S) \to \Z^I\,.
\end{subeqn}
Now let $d\in \Z^I$ be a sufficiently large multidegree so that every
invertible $\calO$-module $\calL$ on $C_{\Fbar_q}$ with
$\multideg(\calL)=d$ satisfies $\rmH^1(C_{\Fbar_q},\calL)=0$ and has a
global section whose divisor is finite. Let $\calL_0$ be an invertible
$\calO$-module on~$C$, rigidified at~$b$, with
$\multideg(\calL_0)=d$. Then over $C\times_S J^0$ we have the
invertible $\calO$-module $\calL^\univ\otimes\calL_0$, and its
pushforward $\calE$ to~$J^0$. Then $\calE$ is a locally free
$\calO$-module on $J^0$. Let $E$ be the geometric vector bundle over
$J^0$ corresponding to~$\calE$. Then over~$E$, $\calE$ has its
universal section. Let $U\subset E$ be the open subscheme where the
divisor of this universal section is finite over~$J^0$. The
$J^0$-group scheme $\Gm$ acts freely on~$U$. We define $V:=U/\Gm$. As
the $\Gm$-action preserves the invertible $\calO$-module and its
rigidification, the morphism $U\to J^0$ factors through $U\to V$ and
gives a morphism $\Sigma_{\calL_0}\colon V\to J^0$. Then on
$C\times_SV$ we have the universal effective relative Cartier divisor
$D^\univ$ on~$C\times_SV\to V$ of multidegree~$d$, and
$\calL^\univ\otimes\calL_0$ together with its rigidification at~$b$ is
(uniquely) isomorphic to
$\calO_{C\times_SV}(D^\univ)\otimes_{\calO_V}b^*\calO_{C\times_SV}(-D^\univ)$
with its tautological rigidification at~$b$, in a diagram:
\begin{subeqn}\label{eq:universal_L_and_D}
\begin{tikzcd}
  \calL^\univ\otimes\calL_0  \arrow[r, equals] &
  \calO_{C\times_SV}(D^\univ)\otimes_{\calO_V}b^*\calO_{C\times_SV}(-D^\univ)\,.
\end{tikzcd}
\end{subeqn}

Then $\Sigma_{\calL_0}$ sends, for $T$ an $S$-scheme, a $T$-point $D$
on $C_T$ to
$\calO_{C_T}(D)\otimes_{\calO_T}
b^*\calO_{C_T}(-D)\otimes_{\calO_C}\calL_0^{-1}$ with its rigidification
at~$b$. Let $s_0$ be in $\calL_0(C)$ such that its divisor $D_0$ is
finite over~$S$, and let $v_0\in V(S)$ be the corresponding point.

On $\Pic^{[0]}_{C/S}\times_SV\times_SC$ we have the universal
$\calL^\univ$ from $\Pic^{[0]}_{C/S}$ with rigidification at~$b$, and
the universal divisor~$D^\univ$. Then on $\Pic^{[0]}_{C/S}\times_SV$
we have the invertible $\calO$-module $\calN_{q,d}$ whose fibre at a
$T$-point $(\calL,\rig,D)$ is
$\Norm_{D/T}(\calL)\otimes_{\calO_T}\Norm_{D_0/T}(\calL)^{-1}$,
canonically trivial on~$\Pic^{[0]}_{C/S}\times_S v_0$:
\begin{subeqn}\label{eq:def_Nq}
\begin{tikzcd}
  \calN_{q,d}\colon \left(\Pic^{[0]}_{C/S}\times_SV\right)(T)\ni
  (\calL,\rig,D)  \arrow[r, mapsto] &
  \Norm_{D/T}(\calL)\otimes_{\calO_T}\Norm_{D_0/T}(\calL)^{-1}\,.
\end{tikzcd}
\end{subeqn}
Any global regular function on the integral scheme
$\Pic^{[0]}_{C/S}\times_SV$ is constant on the generic fibre, hence
in~$\Q_q^\unr$, and restricting it to $(0,v_0)$ shows that it is
in~$\Z_q^\unr$, and if it is $1$ on $\Pic^{[0]}_{C/S}\times_S v_0$, it
is equal to~$1$. Therefore trivialisations on
$\Pic^{[0]}_{C/S}\times_S v_0$ rigidify invertible $\calO$-modules on
$\Pic^{[0]}_{C/S}\times_SV$.

The next proposition generalises~\cite{M-B_MP}, Corollary~2.8.6 and
Lemma~2.7.11.2: there, $C\to S$ is nodal (but not necessarily
regular), and the restriction of $\calM$ to $J^0\times_S J^0$ is described.

\begin{subProposition}\label{prop:M_and_N_q}
In the situation of Section~\ref{sec:explicit_P_over_Zq}, the pullback
of the invertible $\calO$-module $\calM$ on $J\times_{\Z_q^\unr} J^0$ to
$\Pic^{[0]}_{C/{\Z_q^\unr}}\times_{\Z_q^\unr} V$ by the product of the
quotient map $\quot\colon \Pic^{[0]}_{C/{\Z_q^\unr}}\to J$ and the map
$\Sigma_{\calL_0}\colon V\to J^0$ is $\calN_{q,d}$, compatible with their
rigidifications at $J\times 0$ and $\Pic^{[0]}_{C/{\Z_q^\unr}}\times v_0$.
In a diagram:
\begin{subeqn}\label{eq:N_q}
\begin{tikzcd}
  P^\times \arrow{d} & & \calM^\times \arrow{ll} \arrow{d} &
  & \calN_{q,d}^\times \arrow{ll} \arrow{d} \\
J \times_{\Z_q^\unr} J^{\vee,0} & & J\times_{\Z_q^\unr} J^0 \arrow{ll}{\id\times j_b^{*,-1}} & &
\Pic^{[0]}_{C/{\Z_q^\unr}}\times_{\Z_q^\unr} V \arrow{ll}{\quot\times\Sigma_{\calL_0}}\,.
\end{tikzcd}  
\end{subeqn}
For $T$ any $\Z_q^\unr$-scheme, for $x$ in $J(T)$ given by an
invertible $\calO$-module $\calL$ on $C_T$ rigidified at~$b$, and
$y$ in $J^0(T)=\Pic^0_{C/\Z_q^\unr}(T)$ given by the difference
$D=D^+-D^-$ of effective relative Cartier
divisors on~$C_T$ of the same multidegree, we have
\[
  P(x,j_b^{*,-1}(y)) = \calM(x,y) =
  \Norm_{D^+/T}(\calL)\otimes_{\calO_T}\Norm_{D^-/T}(\calL)^{-1}\,.
\]
\end{subProposition}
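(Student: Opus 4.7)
\medskip

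\noindent\textbf{Proof proposal.} The plan is to prove the proposition by first establishing the isomorphism on the generic fibre, where $C_{\Q_q^\unr}$ is smooth and Proposition~\ref{prop_P_and_M_and_norm} applies directly, and then to extend it uniquely to $\Pic^{[0]}_{C/{\Z_q^\unr}}\times_{\Z_q^\unr}V$ using the rigidification on the slice $\Pic^{[0]}_{C/{\Z_q^\unr}}\times v_0$. This is the same uniqueness mechanism used in Section~\ref{sec:extension_Poincare_Neron} to extend $\calM$ itself.

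First I would work over the generic fibre. On $(\Pic^{[0]}_{C/{\Z_q^\unr}}\times_{\Z_q^\unr}V)_{\Q_q^\unr}$ both the pullback $(\quot\times\Sigma_{\calL_0})^*\calM$ and $\calN_{q,d}$ are invertible $\calO$-modules. For a $T$-point $(\calL,D)$ with $T$ a $\Q_q^\unr$-scheme, $\Sigma_{\calL_0}(D)$ is the class of $\calO_{C_T}(D)\otimes\calL_0^{-1}$, which equals $\Sigma(D)-\Sigma(D_0)$ for any effective relative divisor $D_0$ with $\calO(D_0)\cong\calL_0$ (in particular for $D_0=\divisor(s_0)$). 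Applying Proposition~\ref{prop_P_and_M_and_norm} together with Remark~\ref{rem_no_base_point_deg_0}, we get a canonical identification
\[
\calM(x,\Sigma_{\calL_0}(D))=\Norm_{D/T}(\calL)\otimes_{\calO_T}\Norm_{D_0/T}(\calL)^{-1}=\calN_{q,d}(\calL,D)\,,
\]
which assembles into an isomorphism $\alpha_{\Q_q^\unr}\colon(\quot\times\Sigma_{\calL_0})^*\calM\to\calN_{q,d}$ over the generic fibre.

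Next I would extend $\alpha_{\Q_q^\unr}$ to all of $\Pic^{[0]}_{C/{\Z_q^\unr}}\times_{\Z_q^\unr}V$. Since this base is regular (both factors are smooth over $\Z_q^\unr$), any two extensions of $(\quot\times\Sigma_{\calL_0})^*\calM|_{\Q_q^\unr}$ to the whole scheme differ by a divisor supported on the special fibre, and similarly for any extension of the isomorphism $\alpha_{\Q_q^\unr}$. To pin down the correct extension I would use the rigidifications: at the point $v_0$ one has $D_0=\divisor(s_0)$ and $s_0\in\calL_0(C)$ gives $\calO_C(D_0)\cong\calL_0$ canonically, so $\Sigma_{\calL_0}(v_0)=0$ in $J^0(S)$. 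Hence $(\quot\times\Sigma_{\calL_0})^*\calM$ restricted to $\Pic^{[0]}_{C/S}\times v_0$ factors through $J\times 0$, where $\calM$ is canonically trivialised by its rigidification. On the other side, $\calN_{q,d}$ restricted to $\Pic^{[0]}_{C/S}\times v_0$ equals $\Norm_{D_0/T}(\calL)\otimes\Norm_{D_0/T}(\calL)^{-1}$, canonically trivial by construction. A direct check shows that $\alpha_{\Q_q^\unr}$ is compatible with these two canonical trivialisations on the generic fibre of $\Pic^{[0]}_{C/{\Z_q^\unr}}\times v_0$.

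Finally, by the remark preceding the proposition, an invertible $\calO$-module on $\Pic^{[0]}_{C/{\Z_q^\unr}}\times_{\Z_q^\unr}V$ that is trivial on the generic fibre and rigidified on $\Pic^{[0]}_{C/S}\times v_0$ has only the constant $1$ as global section of the trivialising bundle, so its trivialisation is unique. Applied to $\calN_{q,d}\otimes((\quot\times\Sigma_{\calL_0})^*\calM)^{-1}$, this upgrades $\alpha_{\Q_q^\unr}$ to a global isomorphism, and it automatically respects the rigidifications on $\Pic^{[0]}_{C/S}\times v_0$. The main obstacle I anticipate is the rigidification bookkeeping in the last paragraph: one must verify carefully that $\Sigma_{\calL_0}(v_0)=0$ as rigidified line bundles (not just as classes in $J^0$), and that the isomorphism coming from $s_0$ matches the canonical trivialisations on both sides; everything else follows from the standard extension-by-rigidification argument already used in Section~\ref{sec:extension_Poincare_Neron}.
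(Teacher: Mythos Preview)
Your approach is essentially the same as the paper's: construct the isomorphism over $\Q_q^\unr$ via Proposition~\ref{prop_P_and_M_and_norm}, then extend over $\Z_q^\unr$ using regularity and the rigidification along $\Pic^{[0]}_{C/S}\times v_0$. One point to sharpen: the remark preceding the proposition only gives \emph{uniqueness} of an isomorphism compatible with the rigidifications, not existence; to get existence you must show that the divisor $D_\alpha$ of the rational map $\alpha_{\Q_q^\unr}$ vanishes. The paper does this by observing that $V_{\Fbar_q}$ is irreducible, so the irreducible components of the special fibre are the $P^i\times V_{\Fbar_q}$, and then restricting $\alpha$ to a $\Z_q^\unr$-point $(x_i,v_0)$ hitting each $P^i$ to see (via the rigidifications) that each multiplicity is zero---this is the concrete content behind your ``rigidification bookkeeping'' and is the step you should spell out.
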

\begin{proof}
The scheme $\Pic^{[0]}_{C/{\Z_q^\unr}}\times_{\Z_q^\unr} V$ is smooth
over~$\Z_q^\unr$, hence regular, it is connected, hence integral, and
since $V_{\Fbar_q}$ is irreducible, the irreducible components of
$(\Pic^{[0]}_{C/{\Z_q^\unr}}\times_{\Z_q^\unr} V)_{\Fbar_q}$ are the
$P^i\times_{\Fbar_q}V_{\Fbar_q}$, with $P^i$ the
irreducible components of~$(\Pic^{[0]}_{C/{\Z_q^\unr}})_{\Fbar_q}$,
with $i$ in $\pi_0((\Pic^{[0]}_{C/{\Z_q^\unr}})_{\Fbar_q})$, which, by
the way, equals the kernel of $\Z^I\to\Z$, $x\mapsto \sum_{j\in
  I}m_jx_j$. 

We prove the first claim. Both $\calN_{q,d}$ and the pullback of $\calM$
are rigidified on $\Pic^{[0]}_{C/{\Z_q^\unr}}\times v_0$. Below we
will give, after inverting~$q$, an isomorphism $\alpha$ from $\calN_{q,d}$
to the pullback of $\calM$ that is compatible with the
rigidifications. Then there is a unique divisor $D_\alpha$ on
$\Pic^{[0]}_{C/{\Z_q^\unr}}\times_{\Z_q^\unr} V$, supported on
$(\Pic^{[0]}_{C/{\Z_q^\unr}}\times_{\Z_q^\unr} V)_{\Fbar_q}$, such
that $\alpha$ is an isomorphism from $\calN_{q,d}(D_\alpha)$ to the
pullback of~$\calM$. Let $i$ be in
$\pi_0((\Pic^{[0]}_{C/{\Z_q^\unr}})_{\Fbar_q})$, and let $x$ be in
$\Pic^{[0]}_{C/{\Z_q^\unr}}(\Z_q^\unr)$ specialising to an
$\Fbar_q$-point of~$P^i$, then restricting $\alpha$ to $(x_i,v_0)$ and
using the compatibility of $\alpha$ (over~$\Q_q^\unr$) with the
rigidifications, gives that the multiplicity of
$P^i\times V_{\Fbar_q}$ in $D_\alpha$ is zero. Hence $D_\alpha$ is
zero.

Let us now give, over
$(\Pic^{[0]}_{C/\Z_q^\unr}\times_{\Z_q^\unr}V)_{\Q_q^\unr}$, an
isomorphism $\alpha$ from $\calN_{q,d}$ to the pullback of~$\calM$. Note
that $(\Pic^{[0]}_{C/\Z_q^\unr})_{\Q_q^\unr}=J_{\Q_q^\unr}$, and that
$V _{\Q_q^\unr}=C_{\Q_q^\unr}^{(|d|)}$, where $|d|=\sum_i m_id_i$ is
the total degree given by the multidgree~$d$. For $T$ a
$\Q_q^\unr$-scheme, $x\in J(T)$ given by $\calL$ an invertible $\calO_{C_T}$-module
rigidified at~$b$, and $v\in V(T)$ given by a relative Cartier divisor $D$ of degree $|d|$
on~$C_T$, we have, using Proposition~\ref{prop_P_and_M_and_norm}
and~(\ref{eq:def_Nq}), the following isomorphisms (functorial in~$T$),
respecting the rigidifications at $v=v_0$:
\begin{subeqn}
\begin{aligned}
\calM(x,\Sigma_{\calL_0}(v)) & = \calM(x,\Sigma(v)-\Sigma(v_0))
= \calM(x,\Sigma(v))\otimes\calM(x,\Sigma(v_0))^{-1}  \\
& = \Norm_{D/T}(\calL)\otimes_{\calO_T}\Norm_{D_0/T}(\calL)^{-1}
= \calN_{q,d}(x,v)\,.
\end{aligned}
\end{subeqn}
This finishes the proof of the first claim of the Proposition. The
second claim follows directly from the definition of~$\calN_{q,d}$, plus
the compatibility at the end of Proposition~\ref{prop_P_and_M_and_norm}.
\end{proof}

\subsection{Integral points of the extended Poincar\'e torsor}
\label{sec:Z_points_on_P}

Let $C$ over $\Z$ be a curve as in
Section~\ref{sec:alg_geometry}. Given a point
$(x,y) \in (J\times J^0)(\Z)$ we want to describe explicitly the free
$\Z$-module $\calM(x,y)$ when $x$ is given by an invertible
$\calO$-module $\calL$ of total degree $0$ on $C$ rigidified at $b$
and $y$ is given as a relative Cartier divisor $D$ on $C$ of total
degree $0$ with the property that there exists a unique divisor $V$
whose support is disjoint from $b$ and contained in the bad fibres of
$C \to \Spec(\Z)$ such that $\calO(D{+}V)$ has degree zero when
restricted to every irreducible component of any fibre of
$C \to \Spec(\Z)$.  Since $\calM(x,y)$ is a free $\Z$-module of rank
$1$ then it is a submodule of $\calM(x,y)[1/n]$ and writing
$D=D^+-D^-$ as a difference of relative effective Cartier divisors,
Proposition~\ref{prop_P_and_M_and_norm}, with $S=\Spec(\Z[1/n])$,
gives
\begin{subeqn}
\calM(x,y)[1/n] = \left(\Norm_{D^+/\Z}(\calL)
  \otimes_\Z \Norm_{D^-/\Z}(\calL)^{-1} \right)[1/n] 
\end{subeqn}
and consequently there exist unique integers~$e_q$, for $q$ varying
among the primes dividing~$n$, such that, as submodules of
$\left(\Norm_{D^+/\Z}(\calL) \otimes_\Z \Norm_{D^-/\Z}(\calL)^{-1}\right)[1/n]$,
\begin{subeqn}\label{eq:P_over_Z_no_details}
\calM(x,y)= \left( \prod_{q|n}q^{e_q} \right) \cdot
\left(\Norm_{D^+/\Z}(\calL) \otimes_\Z \Norm_{D^-/\Z}(\calL)^{-1}
\right) \,. 
\end{subeqn}
We write $V = \sum_{q|n}V_q$ where $V_q$ is a divisor supported on
$C_{\F_q}$. For every prime $q$ dividing $n$ let $C_{i,q}, i \in I_q$
the  irreducible components of $C_{\F_q}$ with multiplicity $m_{i,q}$
and let $V_{i,q}$ be the integers so that $V_{q} = \sum_{i \in I_q} V_{i,q} C_{i,q}$. 

\begin{subProposition}\label{prop:P_over_Z-point}
  The integers in (\ref{eq:P_over_Z_no_details}) are given by
\[
  e_q = -\sum_{i \in I_q}  V_{i,q} \deg_{\F_q}(\mathcal L|_{C_{i,q}}) \,.
\]
\end{subProposition}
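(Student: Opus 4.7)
The plan is to prove the formula one prime at a time. Fix $q\mid n$; the integer $e_q$ is determined by the $q$-adic valuation of the ratio of integral generators of the two rank-one lattices in $\calM(x,y)\otimes\Q$, so it suffices to work after base change to $S:=\Spec(\Z_q^\unr)$. Over $S$, by the description of $J^0$ in Section~\ref{sec:explicit_P_over_Zq}, the point $y\in J^0(S)$ corresponds to the rigidified line bundle $\calO_C(D+V_q)|_{C_S}$, which by the defining property of $V_q$ has multidegree~$0$ on every component of $C_{\F_q}$.

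Since $C_S$ is regular and $\calO_C(D+V_q)$ has multidegree~$0$, one can find effective relative Cartier divisors $\tilde D_+,\tilde D_-$ on $C_S$ of the same multidegree, disjoint from the horizontal supports of $D^\pm$, with $\calO_{C_S}(\tilde D_+-\tilde D_-)$ isomorphic as rigidified line bundles to $\calO_C(D+V_q)|_{C_S}$ (twist both $\calO_C(D+V_q)$ and $\calO_{C_S}$ by a common sufficiently ample horizontal divisor and pick global sections). Proposition~\ref{prop:M_and_N_q} then yields the canonical identification $\calM(x,y)_S=\Norm_{\tilde D_+/S}(\calL)\otimes\Norm_{\tilde D_-/S}(\calL)^{-1}$. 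On the generic fibre $C_{\Q_q^\unr}$ the divisors $\tilde D_+-\tilde D_-$ and $D^+-D^-$ are linearly equivalent, their difference being the special-fibre divisor $V_q$; hence there is a rational function $f$ on $C_S$ with $\divisor(f)=\tilde D_+-\tilde D_--D^++D^--V_q$, and Lemma~\ref{lem:norm_and_lin_equiv} provides an explicit isomorphism $\phi$ over $\Q_q^\unr$ between the generic fibres of $\Norm_{D^+/S}(\calL)\otimes\Norm_{D^-/S}(\calL)^{-1}$ and $\calM(x,y)_S$. The integer $e_q$ is then the difference between the $q$-adic valuations of the integral generators of these two lattices, viewed inside a common $\Q_q^\unr$-line via $\phi$.

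To carry out the computation I pick a rational section $\ell$ of $\calL$ on $C_S$ whose divisor is $\divisor(\ell)=E^h+\sum_i c_i C_{i,q}$, with $E^h$ horizontal and disjoint from $D^\pm$ and $\tilde D_\pm$. A local argument, based on the identity $v_q(\det(\pi_i\mid\calO_{D,P}))=(D\cdot C_{i,q})_P$ at each closed point $P$ of a relative Cartier divisor $D$, shows that $v_q(\Norm_{D/S}(\ell))$ differs from the $q$-adic valuation of an integral generator of $\Norm_{D/S}(\calL)$ by $\sum_i c_i(D\cdot C_{i,q})$. The same-multidegree condition makes this cancel for $\tilde D_+$ against $\tilde D_-$, while for $D^+$ against $D^-$ it contributes $\sum_i c_i(D\cdot C_{i,q})=-\sum_i c_i(V_q\cdot C_{i,q})$ by the defining property of $V_q$. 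Applying the explicit formula in Lemma~\ref{lem:norm_and_lin_equiv} then adds the contribution $v_q(f(E^h))$; since every horizontal section meets only components of multiplicity~$1$ and the vertical part of $\divisor(f)$ is $-V_q$, this evaluates to $-\sum_i V_{i,q}(E^h\cdot C_{i,q})$. Combining everything with $\deg_{\F_q}(\calL|_{C_{i,q}})=\divisor(\ell)\cdot C_{i,q}=E^h\cdot C_{i,q}+\sum_j c_j(C_{j,q}\cdot C_{i,q})$ and using symmetry of the intersection pairing collapses the expression to $e_q=-\sum_i V_{i,q}\deg_{\F_q}(\calL|_{C_{i,q}})$.

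The main obstacle is this last bookkeeping step. The rational section $\ell$ cannot in general be chosen with purely horizontal divisor: the vertical class of $\calL$ in $\Pic(C_S)$ is prescribed, modulo only the relation $\sum_i m_{i,q}[C_{i,q}]=[\divisor(q)]=0$, so the intersection matrix $(C_{i,q}\cdot C_{j,q})$ enters the calculation at several points. The formula emerges from a pair of cancellations: the equal-multidegree condition on the $\tilde D_\pm$ side kills one contribution, and the defining property of $V_q$ combined with the symmetry of the intersection pairing converts the remaining vertical contribution on the $D^\pm$ side into exactly the quantity that cancels $v_q(f(E^h))$, leaving the claimed answer.
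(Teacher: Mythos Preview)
Your approach is essentially the same as the paper's: represent $y$ by a divisor $\tilde D_+-\tilde D_-$ of multidegree~$0$, invoke Proposition~\ref{prop:M_and_N_q} for the integral lattice, compare to $\Norm_{D^+}(\calL)\otimes\Norm_{D^-}(\calL)^{-1}$ via the rational function $f$ and Lemma~\ref{lem:norm_and_lin_equiv}, and read off $e_q$ as a $q$-adic valuation. The paper makes one simplification you miss: it chooses the rational section $l$ of $\calL$ with \emph{purely horizontal} divisor (by the same Chinese-remainder / moving-lemma trick used to produce~$f_q$, picking one closed point on each $C_{i,q}$ where $l$ generates). This forces all your $c_i$ to vanish, the intersection-form bookkeeping disappears, and $e_q$ is simply $v_q(f(\divisor(l)))$, which is then computed component by component.

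There is a genuine gap in your argument at the step ``since every horizontal section meets only components of multiplicity~$1$''. This is false: a horizontal prime divisor $D_j$ on $C_S$ can perfectly well meet a component $C_{i,q}$ of multiplicity $m_{i,q}>1$. The correct local computation of $v_q(f(D_j))$ when $D_j$ meets only $C_{i,q}$ is the one the paper gives: near such a point $q=u\cdot\pi^{m_{i,q}}$ with $\pi$ a local equation for $C_{i,q}$ and $u$ a unit, so $f^{m_{i,q}}$ and $q^{-V_{i,q}}$ differ by a unit near $D_j$, whence
\[
v_q(f(D_j))=\frac{-V_{i,q}\,\deg_{\Z_q}(D_j)}{m_{i,q}}=-V_{i,q}\,(D_j\cdot C_{i,q})\,,
\]
using $\deg_{\Z_q}(D_j)=D_j\cdot C_{\F_q}=m_{i,q}\,(D_j\cdot C_{i,q})$. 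With this correction your formula $v_q(f(E^h))=-\sum_i V_{i,q}(E^h\cdot C_{i,q})$ does hold, and your final cancellation via the symmetry of the intersection pairing goes through; but as written the justification is wrong, and once you fix it you will see that taking $\ell$ with horizontal divisor from the start (so $c_i=0$) removes the entire last paragraph of bookkeeping.
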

\begin{proof}
For every $q$ dividing $n$ let $H_q$ be an effective relative Cartier
divisor on $C_{\Z_q}$ whose complement $U_q$ is affine (recall that
$C$ is projective over~$\Z$, take a high degree embedding and a
hyperplane section that avoids chosen closed points $c_{i,q}$ on
the~$C_{i,q}$). The Chinese remainder theorem, applied to the
$\calO_C(U_q)$-module $(\calO_C(D+V))(U_q)$ and the (distinct) closed
points~$c_{i,q}$, provides an element $f_q$ of $(\calO_C(D+V))(U_q)$
that generates $\calO_C(D+V)$ at all~$c_{i,q}$. Let
$D_q=D_q^+-D_q^-$ be the divisor of $f_q$ as rational section
of~$\calO_C(D+V)$. Then $D_q^+$ and $D_q^-$ are finite over~$\Z_q$,
and $f_q$ is a rational function on $C_{\Z_q}$ with
\begin{subeqn}
\divisor(f_q) =( D_q^+ - D_q^-) - (D + V) = (D_q^+ + D^-) - (D^+ + D_q^-) - V \,.
\end{subeqn}
This linear equivalence, restricted to~$\Q_q$, gives, via the
definition in~(\ref{eq:explicit_iso_equiv_div}), the isomorphism
\begin{subeqn}
  \phi \colon \Norm_{(D^+ + D_q^-)/\Q_q} (\calL) \lto \Norm_{(D_q^+ +
    D^-)/\Q_q} (\calL) \,.
\end{subeqn}
Tensoring with $\Norm_{(D^- + D^-_q)/\Q_q} (\calL)^{-1}$ we
obtain the isomorphism
\begin{subeqn}
\begin{tikzcd}
  \phi\otimes\id\colon \Norm_{D^+/\Q_q} (\calL) \otimes
  \Norm_{D^-/\Q_q} (\calL)^{-1} \arrow{r} &
  \Norm_{D_q^+/\Q_q} (\calL) \otimes \Norm_{ D^-_q/\Q_q}
  (\calL)^{-1}
\end{tikzcd}
\end{subeqn}
using the identifications
\begin{subeqn}
\begin{aligned}
  \Norm_{D^+/\Q_q} (\calL) \otimes \Norm_{D^-/\Q_q} (\calL)^{-1} &=
  \Norm_{(D^+ + D_q^-)/\Q_q} (\calL) \otimes \Norm_{(D^- +
    D^-_q)/\Q_q} (\calL)^{-1} \\
  \Norm_{D_q^+/\Q_q} (\calL) \otimes \Norm_{ D^-_q/\Q_q} (\calL)^{-1}
  &= \Norm_{(D_q^+ + D^-)/\Q_q} (\calL) \otimes \Norm_{(D^- +
    D^-_q)/\Q_q} (\calL)^{-1}\,.
\end{aligned}
\end{subeqn}
Using the same method as for getting the rational section~$f_q$
of~$\calO_C(D+V)$, we get a rational section~$l$ of $\calL$ with the
support of $\divisor(l)$ finite over~$\Z_q$ and disjoint from the
supports of $D$ and~$D_q$, and from the intersections of different
$C_{i,q}$ and~$C_{j,q}$.
By Proposition~\ref{prop:M_and_N_q}, and the choice of~$l$,
\begin{subeqn}
  \calM(x,y)_{\Z_q} =
  \Norm_{D_q^+/\Z_q}(\calL) \otimes \Norm_{D_q^-/\Z_q}(\calL)^{-1}
  =\Z_q{\cdot}\Norm_{D_q^+/\Z_q}(l) \otimes \Norm_{D_q^-/\Z_q}(l)^{-1}\,,
\end{subeqn}
and
\begin{subeqn}
\Norm_{D^+/\Z_q}(\calL) \otimes \Norm_{D^-/\Z_q}(\calL)^{-1}
  =\Z_q{\cdot}\Norm_{D^+/\Z_q}(l) \otimes \Norm_{D^-/\Z_q}(l)^{-1}\,.
\end{subeqn}
By~(\ref{eq:phi_dirty}), we have that $\phi\otimes\id$ maps
\[
\Norm_{D^+/\Q_q} (l) \otimes \Norm_{D^-/\Q_q} (l)^{-1}
\]
to
\begin{subeqn}
f_q(\divisor(l))^{-1}\cdot
  \Norm_{D_q^+/\Q_q} (l) \otimes \Norm_{D_q^-/\Q_q} (l)^{-1} \,.
\end{subeqn}
Comparing with~(\ref{eq:P_over_Z_no_details}), we conclude that
\begin{subeqn}
e_q = v_q(f_q(\divisor(l)))\,.
\end{subeqn}
We write $\divisor(l)=\sum_j n_jD_j$ as a sum of prime divisors. These
$D_j$ are finite over $\Z_q$, disjoint from the support of the
horizontal part of $\divisor(f_q)$, that is of~$D_q-D$, and each of
them meets only one of the~$C_{i,q}$, say~$C_{s(j),q}$. Then, for
each~$j$, $f_q^{m_{s(j),q}}$ and $q^{-V_{s(j),q}}$ have the same
multiplicity along~$C_{s(j),q}$, and consequently they differ multiplicatively by
a unit on a neighborhood of~$D_j$. Then we have
\begin{subeqn}
\begin{aligned}
  v_q(f_q(D_j)) & = \frac{v_q(f_q^{m_{s(j),q}}(D_j))}{m_{s(j),q}}
  = \frac{v_q(q^{-V_{s(j),q}}(D_j))}{m_{s(j),q}} = \frac{v_q\left(
      \Norm_{D_j/\Z_q}(q^{-V_{s(j),q}}) \right)}{m_{s(j),q}} \\
  & = \frac{-V_{s(j),q}\deg_{\Z_q}(D_j)}{m_{s(j),q}}
  = \frac{-V_{s(j),q}{\cdot}(D_j\cdot C_{\F_q})}{m_{s(j),q}}
  = \frac{-V_{s(j),q}{\cdot}(D_j\cdot
    m_{s(j),q}C_{s(j),q})}{m_{s(j),q}} \\
  & = - V_{s(j),q} (D_j \cdot C_{s(j)}) = -V_q\cdot D_j\,.
\end{aligned}
\end{subeqn}
We get
\begin{subeqn}\label{eq:e_q}
\begin{aligned}
  e_q & = v_q(f_q(\divisor(l))) = -V_q\cdot \divisor(l) =
  -\sum_{i \in I_q} V_{i,q} (C_i\cdot \divisor(l)) \\
  & = -\sum_{i \in I_q}  V_{i,q} \deg_{\F_q}(\mathcal L|_{C_{i,q}}) \,.    
\end{aligned}
\end{subeqn}
\end{proof}

\section{Description of the map from the curve to the torsor}
\label{sec:explicit_j_b_tilde}

The situation is as in Section~\ref{sec:alg_geometry}. The aim of this
section is to give descriptions of all morphisms in the
diagram~(\ref{eq:def_T}), in terms of invertible $\calO$-modules on
$(C\times C)_\Q$ and extensions of them over~$C\times U$, to be used
for doing computations when applying Theorem~\ref{thm:finiteness}. The
main point is that each $\tr_{c_i}\circ f_i$ is described
in~(\ref{eq:jbtilde_L}) as a morphism (of schemes)
$\alpha_{\calL_i}\colon J_\Q\to J_\Q$ with $\calL_i$ an invertible
$\calO$-module on $C\times U$, and that
Proposition~\ref{prop:jbtilde_precise} describes
$(\wt{j_b})_i\colon C_{\Z[1/n]}\to T_i$. For finding the line required
line bundles, see~\cite{CMSV}. 

We describe the morphism $\wt{j_b}\colon U\to T$ in terms of
invertible $\calO$-modules on $C\times C^\sm$.  Since $T$ is the
product, over~$J$, of the $\Gm$-torsors
$T_i :=(\id, m{\cdot}\circ \tr_{c_i} \circ f_i)^*P^\times$ this
amounts to describing, for each~$i$, the morphism
$(\wt{j_b})_i\colon U \to T_i$. Note that
$\tr_{c_i} \circ f_i\colon J_\Q\to J_\Q$ is a morphism of groupschemes
composed with a translation, and that all morphisms of schemes
$\alpha\colon J_\Q\to J_\Q$ are of this form. From now on we fix one
such~$i$ and omit it from our notation.

Let $\alpha\colon J_\Q\to J_\Q$ be a morphism of schemes, let
$\calL_\alpha$ be the pullback of $\calM$
(see~(\ref{eq:P_and_M_and_norm})) to $C_\Q \times C_\Q$ via
$j_b \times (\alpha\circ j_b)$, and let
$T_\alpha:=(\id,\alpha)^*\calM^\times$ on~$J_\Q$:  
\begin{eqn}\label{eq:L_alpha_diagram}
\begin{tikzcd}
& T_\alpha \arrow{d}\arrow{rr} & & \calM^\times\arrow{ld}\\    
  C_\Q \arrow{r}{j_b}\arrow{d}{\diag}  & J_\Q \arrow{r}{(\id,\alpha)}
  & (J\times J)_\Q & \\
  (C\times C)_\Q \arrow{r}{\id\times j_b}
  & (C\times J)_\Q \arrow{r}{\id\times\alpha}
  & (C\times J)_\Q \arrow{u}{j_b\times\id} & \\
  \calL_\alpha^\times\arrow{u}\arrow{rrr} & &
  & \calL^{\univ,\times} \arrow{uuu}\arrow{lu} \,.
\end{tikzcd}
\end{eqn}
Then $(b,\id)^*\calL_\alpha=\calO_{C_\Q}$, $\calL_\alpha$ is
of degree zero on the fibres of $\pr_2\colon (C\times C)_\Q\to C_\Q$,
and: $j_b^*T_\alpha$ is trivial if and only if $\diag^*\calL_\alpha$
is trivial.  Note that diagram~(\ref{eq:L_alpha_diagram}) without the $\Gm$-torsors is
commutative.

Conversely, let $\calL$ be an invertible $\calO$-module on
$(C\times C)_\Q$, rigidified on~$\{b\}\times C_\Q$, and of degree~$0$
on the fibres of $\pr_2\colon(C\times C)_\Q\to C_\Q$.  The universal
property of $\calL^\univ$ gives a unique
$\beta_\calL\colon C_\Q\to J_\Q$ such that
$(\id\times\beta_\calL)^*\calL^\univ=\calL$ (compatible with
rigidification at~$b$).  The Albanese property of
$j_b\colon C_\Q\to J_\Q$ then gives that $\beta_\calL$ extends to a
unique $\alpha_\calL\colon J_\Q\to J_\Q$ such
that~$\alpha_\calL\circ j_b=\beta_\calL$. Then $j_b^*T_{\alpha_\calL}$
is trivial if and only if $\diag^*\calL$ is trivial. We have proved
the following proposition.
\begin{Proposition}\label{prop:alpha_and_L}
In the situation of Section~\ref{sec:alg_geometry}, the above maps
$\alpha\mapsto\calL_\alpha$ and $\calL\mapsto \alpha_\calL$ are
inverse maps between the sets 
\begin{quote}
  \{scheme morphisms $\alpha\colon J_\Q\to J_\Q$ such
that $j_b^*(\id,\alpha)^*\calM$ is trivial\}
\end{quote}
and
\begin{quote}
  \{invertible $\calO$-modules $\calL$ on $(C\times C)_\Q$,
  rigidified on~$\{b\}\times C_\Q$, of degree $0$ on the fibres of
  $\pr_2\colon(C\times C)_\Q\to C_\Q$, and such that $\diag^*\calL$ is
  trivial\}. 
\end{quote}
\end{Proposition}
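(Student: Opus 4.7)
The plan is a diagram chase resting on the identity $\calL^\univ = (j_b\times j_b^{*,-1})^*P$ from Proposition~\ref{prop_P_and_M_and_norm} together with $\calM = (\id\times j_b^{*,-1})^*P$, combined with two uniqueness statements: the universal property of $\calL^\univ$, and the Albanese property of $(C_\Q,b)\to(J_\Q,0)$, according to which every scheme morphism $J_\Q\to J_\Q$ is uniquely a group homomorphism composed with a translation, and is therefore determined by its composition with $j_b$.

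First I would check that $\calL_\alpha$ lies in the target set. The rigidification on $\{b\}\times C_\Q$ is inherited from that of $\calM$ on $\{0\}\times J_\Q$, since $j_b(b)=0$. The restriction of $\calL_\alpha$ to $C_\Q\times\{c\}$ equals $j_b^*(\id,\alpha(j_b(c)))^*\calM$, the pullback via $j_b$ of a line bundle on $J_\Q$ whose class lies in $\Pic^0(J_\Q)$, hence it has degree zero on $C_\Q$. Finally, triviality of $\diag^*\calL_\alpha$ matches triviality of $j_b^*T_\alpha$ because $\diag^*\calL_\alpha = j_b^*(\id,\alpha)^*\calM$. In the reverse direction, given $\calL$ in the target set, the universal property of $\calL^\univ$ produces a unique $\beta_\calL\colon C_\Q\to J_\Q$ with $(\id\times\beta_\calL)^*\calL^\univ = \calL$ (using both the rigidification and the degree-zero hypotheses), and the Albanese property supplies a unique $\alpha_\calL$ with $\alpha_\calL\circ j_b = \beta_\calL$. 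That $\alpha_\calL$ belongs to the source set then follows from
\[
\diag^*\calL \;=\; (\id,\beta_\calL)^*\calL^\univ \;=\; (j_b,j_b^{*,-1}\circ\beta_\calL)^*P \;=\; j_b^*(\id,\alpha_\calL)^*\calM,
\]
where the middle equality uses $\calL^\univ = (j_b\times j_b^{*,-1})^*P$ and the last uses $\calM=(\id\times j_b^{*,-1})^*P$ combined with $\alpha_\calL\circ j_b=\beta_\calL$; so triviality of $\diag^*\calL$ forces triviality of $j_b^*T_{\alpha_\calL}$.

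The inverse-ness then drops out by rewriting
\[
\calL_\alpha \;=\; (j_b\times(\alpha\circ j_b))^*\calM \;=\; (j_b\times(j_b^{*,-1}\circ\alpha\circ j_b))^*P \;=\; (\id\times(\alpha\circ j_b))^*\calL^\univ.
\]
Uniqueness in the universal property of $\calL^\univ$ yields $\beta_{\calL_\alpha}=\alpha\circ j_b$, and uniqueness in the Albanese extension forces $\alpha_{\calL_\alpha}=\alpha$. Running the same identity with $\alpha=\alpha_\calL$ (so that $\alpha\circ j_b=\beta_\calL$) gives $\calL_{\alpha_\calL}=(\id\times\beta_\calL)^*\calL^\univ=\calL$. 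The point demanding care is the bookkeeping of rigidifications: both uniqueness statements require pullback isomorphisms compatible with chosen trivialisations, and the identification in Proposition~\ref{prop_P_and_M_and_norm} is what makes the rigidification of $\calL^\univ$ at $\{b\}\times J$ match those of $P$ at $\{0\}\times J^\vee$ and $J\times\{0\}$, and hence of $\calM$; keeping this alignment explicit throughout the chain is the only slightly delicate part.
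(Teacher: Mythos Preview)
Your proof is correct and follows essentially the same approach as the paper: the paper's argument is the discussion immediately preceding the proposition (culminating in ``We have proved the following proposition''), which uses exactly the same ingredients---the identity $\calL^\univ=(j_b\times j_b^{*,-1})^*P$ from Proposition~\ref{prop_P_and_M_and_norm}, the universal property of $\calL^\univ$, and the Albanese property of~$j_b$. You have made the inverse-ness more explicit than the paper does, in particular by writing out the chain $\calL_\alpha=(\id\times(\alpha\circ j_b))^*\calL^\univ$ and invoking uniqueness, which the paper leaves implicit in diagram~(\ref{eq:L_alpha_diagram}).
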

Now let $\calL$ be in the second set of
Proposition~\ref{prop:alpha_and_L}. Then $\diag^*\calL=\calO_{C_\Q}$,
compatible with rigidifications at~$b$. Let
\begin{eqn}\label{eq:ell_on_diag}
\ell\in(\diag^*\calL^\times)(C_\Q)
\end{eqn}
correspond to~$1$. Then $m{\cdot}\circ\alpha_\calL$ extends over $\Z$
to $m{\cdot}\circ\alpha_\calL\colon J\to J^0$, and the restriction of
$j_b^*(m{\cdot}\circ\alpha_\calL)^*\calM$ on $C^\sm$ to~$U$ is
trivial, giving a lift $\wt{j_b}$, unique up to sign:
\begin{eqn}
\begin{tikzcd}\label{eq:jbtilde_L}
  & & T_{m{\cdot}\circ\alpha_\calL} \arrow{rr} \arrow{d}
  & & \calM^\times \arrow{d} \\
  U \arrow{rru}{\wt{j_b}}\arrow{r}
  & C^\sm \arrow{r}{j_b}
  & J \arrow{rr}{(\id, m{\cdot}\circ\alpha_\calL)} & & J\times J^0\,. 
\end{tikzcd}
\end{eqn}
The invertible $\calO$-module $\calL$ on $(C\times C)_\Q$ with its
rigidification of $(b,\id)^*\calL$, extends uniquely to an invertible
$\calO$-module 
on $(C\times C)_{\Z[1/n]}$, still denoted~$\calL$. 
\begin{Proposition}\label{prop:M_L_D_E}
Let $S$ be a $\Z[1/n]$-scheme, let $d$ and $e$ be in $\Z_{\geq0}$, and
let $D\in
C^{(d)}(S)$ and $E\in C^{(e)}(S)$. Then we have:
\[
\calM(\Sigma(D),\alpha_\calL(\Sigma(E))) =
\left(\Norm_{D/S}(\id,b)^*\calL\right)^{\otimes(1-e)}\otimes
\Norm_{(D\times E)/S}(\calL) \,.
\]
For $x\in C(S)$ we have
\[
T_{m{\cdot}\circ\alpha_\calL}(j_b(x)) =
\calM^\times(j_b(x),m{\cdot}\alpha_\calL(j_b(x)))
= \calL^{\otimes m}(x,x)^\times = (\Gm)_S \,.
\]
\end{Proposition}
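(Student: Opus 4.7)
The plan is to reduce the first formula to the case where $D$ and $E$ are sums of sections of $C\to S$, compute $\alpha_\calL(\Sigma(E))\in J(S)$ explicitly in terms of invertible sheaves on $C_S$, and then apply Proposition~\ref{prop_P_and_M_and_norm} together with the multiplicativity of norms. Since the quotient $C^e\to C^{(e)}$ (and similarly for $d$) is finite locally free and the formula is functorial in $S$, it suffices to check it after base change to $C^d\times_S C^e$, where we may write $D=\sum_k (x_k)$ and $E=\sum_i (y_i)$ with $x_k,y_i\in C(S)$.

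Since $\alpha_\calL\colon J_\Q\to J_\Q$ factors as $\tr_c\circ f$ for a unique group morphism $f$ and $c=\alpha_\calL(0)=\beta_\calL(b)\in J(S)$, and since $\beta_\calL(y)=\alpha_\calL(j_b(y))$ corresponds under $\calL^\univ$ to $(\id,y)^*\calL$ rigidified at~$b$ via the given rigidification on~$\{b\}\times C_\Q$, one gets
\[
\alpha_\calL(\Sigma(E))
=\sum_i f(j_b(y_i))+c
=\sum_i\beta_\calL(y_i)+(1-e)\,c\,,
\]
where the shift $(1-e)c$ records the fact that $\alpha_\calL$ is affine, not linear. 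Translated through $\Pic$, the point $\alpha_\calL(\Sigma(E))\in J(S)$ is represented by the invertible $\calO_{C_S}$-module
\[
\calL_E \;:=\; \bigotimes_i (\id,y_i)^*\calL \;\otimes\;\bigl((\id,b)^*\calL\bigr)^{\otimes(1-e)}\,,
\]
with its induced rigidification at~$b$ (using that $(b,b)^*\calL=\calO_S$).

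Now I apply Proposition~\ref{prop_P_and_M_and_norm} in the form $\calM(x,\Sigma(D))=\Norm_{D/S}(\calL_x)$ for $x\in J(S)$ represented by~$\calL_x$; this is the formula that computes $\calM$ when the second argument is given by a divisor. Using the symmetry of $\calM$ stated in Proposition~\ref{prop_P_and_M_and_norm}, I get
\[
\calM(\Sigma(D),\alpha_\calL(\Sigma(E)))=\calM(\alpha_\calL(\Sigma(E)),\Sigma(D))=\Norm_{D/S}(\calL_E)\,.
\]
Multiplicativity of the norm in the invertible sheaf (Lemma~\ref{lem_norms}), combined with the identity $\Norm_{D/S}((\id,y_i)^*\calL)=\Norm_{(D\times\{y_i\})/S}(\calL)$ (functoriality of the norm under the cartesian square with $D\times_S\{y_i\}=D\times\{y_i\}$) and additivity of the norm in the divisor, yields
\[
\Norm_{D/S}(\calL_E)=\Norm_{(D\times E)/S}(\calL)\otimes\bigl(\Norm_{D/S}(\id,b)^*\calL\bigr)^{\otimes(1-e)}\,,
\]
which is the claimed formula.

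For the second statement I specialise the first with $d=e=1$ and $D=E=(x)$: the factor of exponent $1-e=0$ disappears and $D\times E=(x,x)$ as a section of $C\times C$, giving $\calM(j_b(x),\alpha_\calL(j_b(x)))=\calL(x,x)=(\id\times\id)^*\diag^*\calL\,|_{x}$. The partial group law $+_2$ on $\calM^\times$ (Proposition~\ref{prop_P_and_M_and_norm}, equation~(\ref{eq:plus_2_M})) gives $\calM(y,m\cdot z)=\calM(y,z)^{\otimes m}$, so
\[
T_{m\cdot\circ\alpha_\calL}(j_b(x))=\calM(j_b(x),m\cdot\alpha_\calL(j_b(x)))=\calM(j_b(x),\alpha_\calL(j_b(x)))^{\otimes m}=\calL^{\otimes m}(x,x)\,.
\]
By hypothesis $\diag^*\calL$ is trivialised (by the section $\ell$ of~(\ref{eq:ell_on_diag})), so $\calL(x,x)\cong\calO_S$ canonically, and passing to associated $\Gm$-torsors gives $(\Gm)_S$, as required. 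The main subtlety is the bookkeeping in the first paragraph: one has to keep track of the translation $c=\beta_\calL(b)$ and its contribution $(1-e)c$, as well as of the rigidifications of $\calL_E$ at~$b$, in order to identify the correct invertible sheaf representing $\alpha_\calL(\Sigma(E))\in J(S)$; the rest is formal manipulation with norms and the symmetry and bilinearity of~$\calM$.
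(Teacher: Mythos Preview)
Your proof is correct and follows essentially the same route as the paper's: both reduce to split divisors via finite locally free base change, use the symmetry of $\calM$ to swap the arguments, rewrite $\alpha_\calL(\Sigma(E))=\sum_j\beta_\calL(y_j)+(1-e)\beta_\calL(b)$, and then apply the norm description of $\calM$ from Proposition~\ref{prop_P_and_M_and_norm}. The paper expands the biextension product directly into $\bigotimes_i\calL(x_i,b)^{\otimes(1-e)}\otimes\bigotimes_{i,j}\calL(x_i,y_j)$, whereas you package the same computation as $\Norm_{D/S}(\calL_E)$ and then unpack via multiplicativity and additivity of norms; this is the same calculation. For the second claim you are slightly more explicit than the paper in invoking $+_2$ to pass from $\alpha_\calL(j_b(x))$ to $m{\cdot}\alpha_\calL(j_b(x))$, and in citing the trivialisation $\ell$ of $\diag^*\calL$ rather than just ``the rigidification at~$b$''; both are welcome clarifications but do not change the argument.
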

\begin{proof}
We may and do assume (finite locally free base change
on~$S$) that we have $x_i$ and $y_j$ in~$C(S)$, such that
$D=\sum_ix_i$ and $E=\sum_j y_j$. Recall that, for $c\in C(S)$,
$\beta_\calL(c)$ in $J(S)$ is $(\id,c)^*\calL$ on~$C_S$, with its
rigidification at~$b$. Then we have:
\begin{subeqn}\label{eq:M_L_D_E_proof}
\begin{aligned}
\calM(\Sigma(D),\alpha_\calL(\Sigma(E)))
& = \calM(\alpha_\calL(\Sigma(E)),\Sigma(D)) \\
& =
\calM\left(\beta_\calL(b)+\sum_j(\beta_\calL(y_j)-\beta_\calL(b)),\sum_i
j_b(x_i)\right) \\
& = \left(\bigotimes_i\calL(x_i,b)^{\otimes(1-e)}\right) \otimes
\bigotimes_{i,j}\calL(x_i,y_j)\,.
\end{aligned}
\end{subeqn}
from which the desired equality follows.

Now we prove the second claim. Let $x$ be in~$C(S)$. The first
equality holds by definition. Taking $D=E=x$ in what we just proved,
gives the second equality, and the third comes from the rigidification
at~$b$. 
\end{proof}
Now let $\calL$ be any extension of $\calL$ with its rigidification
of~$(b,\id)^*\calL$ from $(C\times C)_{\Z[1/n]}$ to~$C\times U$.
For $q$ dividing~$n$, let $W_q$ be the valuation along~$U_{\F_q}$
of the rational section $\ell$ of $\diag^*\calL$ on~$U$. Then
$\ell$, multiplied by the product, over the primes $q$ dividing~$n$,
of~$q^{-W_q}$, generates $\diag^*\calL$ on~$U$:
\begin{eqn}\label{eq:W_l}
\left(\prod_{q|n}q^{-W_q}\right){\cdot}\ell \in (\diag^*\calL^\times)(U)\,.
\end{eqn}
There is a unique divisor $V$ on $C\times U$ with support disjoint
from $(b,\id)U$ and contained in the $(C\times U)_{\F_q}$ with $q$
dividing~$n$, such that
\begin{eqn}\label{eq:modif_calLm_multideg_0}
\calL^m:=\calL^{\otimes m}(V)\quad\text{on $C\times U$}
\end{eqn}
has multidegree~$0$ on the fibres of $\pr_2\colon C\times U\to U$. Then
$\calL^m$ is the pullback of $\calL^\univ$ via
$\id\times(m{\cdot}\circ\alpha_\calL\circ j_b)\colon C\times U\to
C\times J^0$. Its restriction $\calL^m|_{C^\sm\times U}$ is then the
pullback of $\calM$ via
$j_b\times(m{\cdot}\circ\alpha_\calL\circ j_b)\colon C^\sm\times U\to
J\times J^0$, because on $C^\sm\times J^0$ the restriction of
$\calL^\univ$ and $(j_b\times\id)^*\calM$ are equal (both are
rigidified after $(b,\id)^*$ and equal over~$\Z[1/n]$; here we use
that, for all~$q|n$, $J^0_{\F_q}$ is geometrically connected). Hence,
on~$U$ we have
$j_b^*T_{m{\cdot}\circ\alpha_\calL}=\diag^*(\calL^{\otimes m}(V)^\times)$,
compatible with rigidifications at~$b\in U(\Z[1/n])$. Our trivialisation
$\wt{j_b}$ on~$U$ of~$T_{m{\cdot}\circ\alpha_\calL}$ is
therefore a generating section of~$\calL^{\otimes m}$, multiplied by
the product over the $q$ dividing~$n$, of the factors~$q^{-V_q}$,
where $V_q$ is the multiplicity in~$V$ of the prime divisor
$(U\times U)_{\F_q}$. This means that we have proved the following
proposition. 
\begin{Proposition}\label{prop:jbtilde_precise}
For $x$ and $S$ as in Proposition~\ref{prop:M_L_D_E}, we have the
following description of~$\wt{j_b}$:
\[
  \wt{j_b}(x) =
  \left(\prod_{q|n}q^{-mW_q-V_q}\right){\cdot}\ell^{\otimes m}
  \quad\text{in $(T_{m{\cdot}\circ\alpha_\calL}(j_b(x)))(S)
    =\calL^{\otimes m}(x,x)^\times(S)$.}
\]
\end{Proposition}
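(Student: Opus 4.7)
The plan is to verify that both sides define the same (up to sign) trivialising element of $T_{m{\cdot}\circ\alpha_\calL}(j_b(x)) = \calL^{\otimes m}(x,x)^\times$, where the latter equality is supplied by Proposition~\ref{prop:M_L_D_E} applied with $D=E=x$. So my target is: produce a nowhere-vanishing section of $\diag^*\calL^{\otimes m}$ on $U$ that is compatible with the previously defined $\wt{j_b}$.

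First I would globalise the fibre-wise identification by showing that, on $U$,
\[
j_b^*\,T_{m{\cdot}\circ\alpha_\calL} \;=\; \diag^*\bigl(\calL^{\otimes m}(V)\bigr)^{\!\times}\,,
\]
compatibly with the rigidifications at $b$. The sheaf $\calL^m:=\calL^{\otimes m}(V)$ on $C\times U$ has multidegree zero on every $\pr_2$-fibre, by the choice of $V$ in~(\ref{eq:modif_calLm_multideg_0}), so by the universal property of $\calL^\univ$ it is the pullback of $\calL^\univ$ along $\id\times(m{\cdot}\circ\alpha_\calL\circ j_b)$. Restricting to $C^\sm\times U$ and invoking Proposition~\ref{prop:M_and_N_q}, I would deduce that $\calL^m|_{C^\sm\times U}$ agrees with the pullback of $\calM$ along $j_b\times(m{\cdot}\circ\alpha_\calL\circ j_b)$. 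To make sense of this agreement integrally I use that $\calL^\univ$ and $(j_b\times\id)^*\calM$ coincide on $C^\sm\times J^0$: both are rigidified along $(b,\id)^*$ and agree after inverting~$n$, and the geometric connectedness of $J^0_{\F_q}$ for $q\mid n$ forces the integral extension to be unique.

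Second I would translate the rational section $\ell$ of $\diag^*\calL$ from~(\ref{eq:ell_on_diag}) into a generator of the trivialised torsor on $U$. By the definition of $W_q$ as the valuation of $\ell$ along $U_{\F_q}$, the product $(\prod_{q\mid n}q^{-W_q}){\cdot}\ell$ is a generator of $\diag^*\calL$ on $U$, as recorded in~(\ref{eq:W_l}). Raising to the $m$th tensor power yields a generator $(\prod q^{-mW_q}){\cdot}\ell^{\otimes m}$ of $\diag^*\calL^{\otimes m}$ on $U$. Twisting by $V$ then contributes an additional factor $\prod q^{-V_q}$, since by definition $V_q$ is the multiplicity in $V$ of the prime divisor of $C\times U$ whose restriction along $\diag\colon U\to C\times U$ is the Cartier divisor $U_{\F_q}$ on $U$. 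Combining the two factors gives precisely $(\prod q^{-mW_q-V_q}){\cdot}\ell^{\otimes m}$, which is therefore a generating section of $\diag^*(\calL^{\otimes m}(V))$ on $U$, and thus a trivialisation of $j_b^*T_{m{\cdot}\circ\alpha_\calL}$.

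Finally I would note that $\wt{j_b}$ is characterised uniquely up to $\Z^\times=\{\pm 1\}$ by being such a trivialisation, so the two sections coincide (with the sign determined by the implicit choices behind $\ell$ and $\wt{j_b}$). The main obstacle is the first step: carefully pinning down the integral identification $j_b^*T_{m{\cdot}\circ\alpha_\calL}=\diag^*(\calL^{\otimes m}(V))^{\times}$, where one has to align the rigidifications and use both the universal property of $\calL^\univ$ and the geometric connectedness of $J^0_{\F_q}$ to promote the generic equality to an integral one. Once this is in place, the computation of valuations in the second step is essentially bookkeeping.
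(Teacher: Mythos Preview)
Your proposal is correct and follows essentially the same argument as the paper, which appears in the discussion immediately preceding the proposition (around equations~(\ref{eq:W_l}) and~(\ref{eq:modif_calLm_multideg_0})). The only minor difference is that the paper does not invoke Proposition~\ref{prop:M_and_N_q} here; the direct argument you give afterwards---rigidification along $(b,\id)$, equality over $\Z[1/n]$, and geometric connectedness of $J^0_{\F_q}$ to force the integral identification $\calL^\univ|_{C^\sm\times J^0}=(j_b\times\id)^*\calM$---is exactly what the paper uses.
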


\section{An example with genus~2, rank~2, and 14 points}
\label{sec:example}
The example that we are going to treat is the quotient of the modular
curve $X_0(129)$ by the action of the group of order $4$ generated by
the Atkin-Lehner involutions $w_3$ and~$w_{43}$. An equation for this
quotient is given in the table in~\cite{Has}, and Magma has shown that
that equation and the equations below give isomorphic curves
over~$\Q$.

Let $C_0$ be the curve over $\Z$ obtained from the following closed
subschemes of~$\A^2_\Z$ 
\[
\begin{aligned}
V_1:& \quad y^2 +  y  = x^6 - 3 x^5 + x^4 + 3 x^3 - x^2 - x\,, \\
V_2:& \quad w^2 +  z^3 w = 1 - 3 z + z^2 + 3 z^3 - z^4 - z^5
\end{aligned}
\]
by glueing the open subset of $V_1$ where $x$ is invertible with the
open subset of $V_2$ where $z$ is invertible using the identifications
$z = 1/x$, $w = y/x^3$. The scheme $C_0$ can be also described as a
subscheme of the line bundle $\calL_3$ associated to the invertible
$\calO$-module $\calO_{\P_\Z^1}(3)$ on $\P^1_\Z$ with homogeneous coordinates
$X,Z$: the map $\calO_{\P_\Z^1}(3) \to \calO_{\P_\Z^1}(6)$ sending a section $Y$ to
$Y\otimes Y  + Z^3 \otimes Y$ induces a map $\phi$ from $\calL_3$ to
the line bundle $\calL_6$ associated to $\calO(6)$; then $C_0$ is
isomorphic to the inverse image by $\phi$ of the section $s := X^6 {-}
3 X^5Z {+} X^4Z^2 {+} 3 X^3Z^3 {-} X^2Z^4 {-} XZ^5$ of $\calL_6$ and
since the map $\phi$ is finite of degree $2$ then $C_0$ is finite of
degree $2$ over~$\P^1_\Z$. 
Hence $C_0$ is proper over $\Z$ and it is moreover smooth over
$\Z[1/n]$ with $n = 3 \cdot 43$. The generic fiber of $C_0$ is a curve
of genus $g =2$, labeled 5547.b.16641.1 on \url{www.lmfdb.org}. The
only point where  $C_0$ is not regular is the point $ P_0 = (3,
x-2,y-1)$ contained in $V_1$ and the blow up $C$ of $C_0$ in $P_0$ is
regular. 

In the rest of this section we apply our geometric method to the curve
$C$ and we prove that $C(\Z)$ contains exactly $14$ elements. We use
the same notation as in Sections~\ref{sec:alg_geometry}
and~\ref{sec:closure-finiteness}.  

The fiber $C_{\F_{43}}$ is absolutely irreducible while $C_{\F_3}$ is the union of two geometrically irreducible curves, a curve of genus $0$ that lies above the point $P_0$ and that we call $K_0$, and a curve of genus $1$ that we call $K_1$.
We define $U_0 := C\setminus K_1$ and $U_1 := C \setminus K_0$ so that $C(\Z) = C^\sm(\Z) = U_0(\Z) \cup U_1(\Z)$ and both $U_0$ and $U_1$ satisfy the hypothesis on $U$ in Section~\ref{sec:alg_geometry}.
We have $K_0 \cdot K_1 = 2$ and consequently the self-intersections of $K_0$ and $K_1$ are both equal to ${-}2$. 
We deduce that all the fibers of $J$ over $\Z$ are connected except for $J_{\F_3}$ which has group of connected components equal to~$\Z/2\Z$. Hence,
\begin{eqn}\label{eq:m}
m = 2\,.
\end{eqn}
The automorphism group of $C$ is isomorphic to $(\Z/2\Z)^2$, generated by the automorphisms $\iota$ and $\eta$ lifting the extension to $C_0$ of 
\[
\iota, \eta  \colon V_1 \lto V_1 \,, \quad \iota \colon (x,y) \longmapsto (x, -1-y)\,, \quad \eta \colon (x,y) \longmapsto (1-x, -1-y) \,.
\]
The quotients $E_1 :=C_\Q/ \eta$ and $E_2 := C_\Q/(\iota \circ \eta)$ are curves of genus $1$ and the two projections $C \to E_i$ induce an isogeny $J \to \Pic^0(E_1) \times \Pic^0(E_2)$. The elliptic curves $\Pic^0(E_i)$ are not isogenous and $\rho = 2$.

\subsection{The torsor on the jacobian}\label{sec:example_f_and_parameters_on_T}
Let $\infty, \infty_- \in C(\Z)$ be the lifts of $(0,1), (0,-1) \in V_2(\Z) \subset C_0(\Z)$ and let us fix the base point $b = \infty$ in $C(\Z)$.
Following Section~\ref{sec:explicit_j_b_tilde} we describe a $\Gm$-torsor $T\to J$ and maps $\wt{j_{b,i}} \colon U_i \to T$ using invertible $\calO$-modules on $C \times C^\sm$.
The torsor $T = (\id, m{\cdot}\circ \alpha)^* \calM^\times$ only depends on the scheme morphism $\alpha\colon J_\Q\to J_\Q$, which, by Proposition~\ref{prop:alpha_and_L}, is uniquely determined by an invertible $\calO$-module $\calL$ on $(C\times C)_\Q$, rigidified on~$\{b\}\times C_\Q$, of degree $0$ on the fibres of $\pr_2\colon(C\times C)_\Q\to C_\Q$, and such that $\diag^*\calL$ is trivial. 

We now look for a non-trivial $\calO$-module $\calL$ with these properties using the homomorphism $\eta^*\colon J_\Q \to J_\Q$, which does not belong to $\Z \subset \mathrm{End}(J_\Q)$. We can take $\alpha$ of the form $ \tr_c \circ( n_1{\cdot} \eta^* + n_2 {\cdot}\id)$, where $\id\colon J_\Q \to J_\Q$ is the identity map, $n_i$ are integers and $c$ lies in $J(\Q)$. Using the map $\alpha \mapsto \calL_\alpha := (j_b\times (j_b{\circ} \alpha))^* \calM$ in 
Proposition~\ref{prop:alpha_and_L}, the $\calO$-module $\calL_{\tr_c}$ is isomorphic to $\calO_{C_\Q \times C_\Q}(\pr_1^*D)$, the $\calO$-module $\calL_{\eta^*}$ is isomorphic to $\calO_{C_\Q \times C_\Q}(\Gamma_{\eta,\Q} - \pr_1^*\eta^*(b) -\pr_2^*\eta(b))$ and the $\calO$-module $\calL_{\id}$ is isomorphic to $\calO_{C_\Q \times C_\Q}(\diag(C_\Q) - \pr_1^*(b) -\pr_2^*(b))$, where $D$ is a divisor on $C_\Q$ representing $c$, the maps $\pr_i$ are the projections $C_\Q \times C_\Q \to C_\Q$ and $\Gamma_{\eta}$ is the graph of the map $\eta \colon C \to C$. Hence, we can take $\calL$ of the form $\calO_{C_\Q \times C_\Q}(n_1 \Gamma_{\eta, \Q } + n_2 \diag(C_\Q) + \pr_1^*D_1 + \pr_2^*D_2)$ for some integers $n_i$ and some divisors $D_i$ on $C_\Q$. Among the $\calO$-modules of this form satisfying the needed properties, we choose 
\[
\calL := \calO_{C_\Q \times C_\Q}(\Gamma_{\eta, \Q } - \pr_1^* (\infty_-) - \pr_2^*(\infty) )  =\calO_{C_\Q \times C_\Q}(\Gamma_{\eta, \Q } - \infty_- \times C_\Q - C_\Q \times\infty ) 
\]
trivialised on $b\times C_\Q$ through the section
\[
l_b := 2  \quad \text{ in } ((b,\id)^*\calL)(C_\Q) = \calO_{C_\Q}(\eta(b) - b)(C_\Q) = \calO_{C_\Q}(C_\Q)\,.
\]
For every $\ol \Q$-point $Q$ on $C_\Q$ the invertible $\calO_{C_{\ol \Q}}$-module $(\id, Q)^*\calL$ is isomorphic to $\calO_{C_{\ol \Q}}(\eta(Q) -\infty_-)$, hence 
\[
\alpha_\calL = \tr_c \circ f\,, \quad \text{with } f = \eta_*  \text{ and }c = [D_0] \,, D_0:= \infty - \infty_- \,.
\]
When restricted to the diagonal $\calL$ is trivial since, compatibly with the trivialisation at $(b,b)$,
\[
\diag^*\calL = \calO_{C_\Q}(\infty_- + \infty - \infty_- - \infty) = \calO_{C_\Q}\,.
\]
In particular, the global section $l := 1$ of $\calO_{C_\Q}$ gives a rigidification of $\diag^*\calL$ that we write as
\[
\diag^*\calL = l\cdot \calO_{C_\Q}\,.
\]
Following Proposition~\ref{prop:jbtilde_precise} and the discussion preceding it, we choose the extension of $\calL$ over $C \times C^\sm$
\[
\calL := \calO(\Gamma_{\eta}|_{C \times C^\sm} - \infty_- \times C^\sm - C \times \infty)\,,
\]
trivialised along $b\times C^\sm$ through the section $l_b=2$ (the
points $\infty_-$ and $b$ have a simple intersection over the prime
$2$). By Proposition~\ref{prop:M_L_D_E}, the torsor $T:= T_{m{\cdot} \circ \alpha_\calL}$ on $J$, with $m=2$ as explained before Equation~(\ref{eq:m}), satisfies, for $S$ a $\Z[1/n]$-scheme and $x$ in $C(S)$, using the trivialisation given by $l$ and $l_b$
\begin{subeqn}\label{eq:example_fibers_T}
\begin{aligned}
T(j_b(x)) & = \calM^\times (j_b(x), m{\cdot}\alpha_\calL(j_b(x))) =  \calM^\times (j_b(x), (\id, x)^* \calL^{\otimes m}) \\
&= x^*(\id, x)^* \calL^{\otimes m, \times} \otimes b^* (\id, x)^* \calL^{\otimes -m, \times} \\
& = \calL^{\otimes m, \times}(x,x) \otimes \calL^{\otimes m, \times}(b,x)^{-1} = \calL^{\otimes m, \times}(x,x)  = \calO_S^\times \,.
\end{aligned}
\end{subeqn}
Using Proposition~\ref{prop:jbtilde_precise} we now compute $\wt{j_{b,0}}$ and $\wt{j_{b,1}}$. 
Since $l$ generates $\diag^*(\calL)$ on the whole $C^\sm$, we have $W_3 = W_{43} = 0$.
The invertible $\calO$-module $\calL^{\otimes m}$ has multidegree $0$ over all the fibers $C\times U_1 \to U_1$, hence in order to compute $\wt{j_{b,1}}$ we must take $V = 0$ in~(\ref{eq:modif_calLm_multideg_0}), giving $V_3 = V_{43} = 0$. Hence for $S$ and $x$ as in~(\ref{eq:example_fibers_T}), assuming moreover that $2$ is invertible on $S$,
\begin{subeqn}\label{eq:example_tilde_j_b_1}
\wt{j_{b,1}}(x) = l^2 \otimes l_b^{-2} = \frac 1 4 (x^* 1) \otimes (b^* 1)^{-1} \quad \text{in } 
\end{subeqn}
\[
T(j_b(x)) = x^*(\id, x)^* \calL^{\otimes m, \times} \otimes b^* (\id, x)^* \calL^{\otimes -m, \times} =x^* \calO_{C_S}(\eta(x) -\infty_-)^\times \otimes b^* \calO_{C_S} (\eta(x) - \infty_-)^\times \,,
\]
where the last equality in~(\ref{eq:example_tilde_j_b_1}) makes sense if the image of $x$ is disjoint from $\infty, \infty_-$ in $C_S$.

The restriction $\calL^{\otimes m}$ to $C \times U_0$ has multidegree $0$ over all the fibers $C\times U_0 \to U_0$ of characteristic not $3$, while if we consider a fiber of characteristic $3$ it has degree $2$ over $K_0$ and degree ${-}2$ over $K_1$. Hence for computing $\wt{j_{b,0}}$ we take $V = K_0 \times (K_0\cap U_0)$ in~(\ref{eq:modif_calLm_multideg_0}) giving $V_{43} = 0$, $V_3 = 1$. Hence for $S$ and $x$ as in~(\ref{eq:example_fibers_T}), assuming moreover that $2$ is invertible on $S$,
\begin{subeqn}\label{eq:example_tilde_j_b_0}
\wt{j_{b,0}}(x) = \frac 1 3 l^2 \otimes l_b^{-2} = \frac{1}{12}(x^* 1) \otimes (b^* 1)^{-1} \quad \text{in } 
\end{subeqn}
\[
T(j_b(x)) = x^*(\id, x)^* \calL^{\otimes m, \times} \otimes b^* (\id, x)^* \calL^{\otimes -m, \times} =x^* \calO_{C_S}(\eta(x) -\infty_-)^\times \otimes b^* \calO_{C_S} (\eta(x) - \infty_-)^\times \,,
\]
where the last equality in~(\ref{eq:example_tilde_j_b_0}) makes sense if the image of $x$ is disjoint from $\infty, \infty_-$ in $C_S$.

\subsection{Some integral points on the biextension}
\label{subsec:known_points}
On $C_0$ we have the following integral points that lift uniquely to elements of $C(\Z)$
\[
\begin{aligned}
&\infty = (0, 1)  \,, \quad  \infty_-: = (0,-1) \quad \text{in } V_2(\Z) \,, \\  
&\alpha := (1,0) \,, \quad
\beta := \eta(\alpha) = (0, -1) \,, \quad  \gamma := (2,1) \,, \quad  \delta := \eta(\gamma) = (-1,-2) \quad \text{in } V_1(\Z) \,.
\end{aligned}
\]
Computations in Magma confirm that $J(\Z)$ is a free $\Z$-module of rank $r = 2$ generated by 
\[
G_1 := \gamma  - \alpha\,, \quad  G_2 := \alpha + \infty_- - 2 \infty \,.
\]
The points in $T(\Z)$  are a subset of points of $\calM^\times(\Z)$ that can be constructed, using the two group laws, from the points in $\calM^\times(G_i, m{\cdot}f(G_j))(\Z)$ and $\calM^\times(G_i, m{\cdot}D_0)(\Z)$ for $i,j \in \{1,2 \}$. 
Let us compute in detail $\calM^\times(G_1, m \cdot f(G_1))(\Z)$.  
As explained in Proposition~\ref{prop:P_over_Z-point}, we have
\[
\begin{aligned}
\calM(G_1, m {\cdot} f(G_1))^\times & = \calM^\times(\gamma - \alpha, 2\delta - 2\beta) \\
&= 3^{e_3}  43^{e_{43}} \cdot \Norm_{(2\delta)/\Z}(\calO_C(\gamma {-} \alpha)) \otimes \Norm_{(2\beta)/\Z}(\calO(\gamma {-} \alpha))^{-1} \\
 & = 3^{e_3} 43^{e_{43}} \cdot (2\delta - 2\beta)^*\calO_C(\gamma - \alpha)
\end{aligned}
\]
where, given a scheme $S$, an invertible $\calO$-module $\calL$ on $C_S$ and a divisor $D_+ {-} D_- = \sum_{i} n_iP_i$ on $C_S$ that is sum of $S$-points, we define the invertible $\calO_S$-module
\[
\left( \sum_{i} n_iP_i \right)^* \calL := \bigotimes_i P_i^*\calL^{n_i} = \Norm_{D_+/S} (\calL) \otimes \Norm_{D_-/S}(\calL)^{-1} \,.
\]
Since $C_{\F_{43}}$ is irreducible then $2f(G_1)$ has already multidegree $0$ over $43$, hence $e_{43} = 0$. If we look at $C_{\F_3}$ then $2f(G_1)$ does not have multidegree $0$, while $2f(G_1) + K_0$ has multidegree $0$; hence, by Proposition~\ref{prop:P_over_Z-point},
\[
e_3 = - \deg_{\F_3} \calO_C(\gamma - \alpha)|_{K_0} = -1 \,.
\]
Notice that over $\Z[\frac 1 2]$ the divisor $G_1$ is disjoint from $\beta$ and $\delta$  (to see that it is disjoint from $\delta = (-1,-2,1)$  over the prime $3$ one needs to look at local equations of the blow up) thus $\beta ^* \calO_C(\gamma - \alpha)$ and $\delta ^* \calO_C(\gamma - \alpha)$ are generated by $\beta^* 1$ and $\delta^*1$ over $\Z[\frac 1 2]$. Thus there are integers $e_\beta, e_{\delta}$ such that $\beta ^* \calO_C(\gamma - \alpha)$ and $\delta ^* \calO_C(\gamma - \alpha)$ are generated by $\beta^* 2^{e_\beta}$ and $\delta^*2^{e_{\delta}}$ over $\Z$. Looking at the intersections between $\beta, \gamma, \alpha$ and $\delta$ we compute that $e_\beta = -1$ and $e_{\delta} = 1$, hence 
\[
\begin{aligned}
& \calM(G_1, m \cdot f(G_1))  = 3^{-1} {\cdot}(\delta^*2)^2 \otimes (\beta^* 2^{-1})^{-2} \cdot \Z = 2^4 {\cdot }3^{-1} {\cdot} (\delta^*1)^2 \otimes (\beta^* 1) \cdot \Z \quad \text{and} \\
& Q_{1,1} := \pm  2^4 {\cdot }3^{-1} {\cdot} (\delta^*1)^2 \otimes (\beta^* 1)^{-2}  \in \calM_{G_1, m \cdot f(G_1)}^\times(\Z) \,.
\end{aligned}
\]
With analogous computations we see that 
\[
\begin{aligned}
Q_{2,1} & :=  2^{-2} {\cdot} (\delta^*1)^2 {\otimes} (\beta^* 1)^{-2}  \quad &\text{generates }  \calM_{G_2, m \cdot f(G_1)} 
\\ Q_{1,2} & := 2^{-2} {\cdot} (\beta^*1)^2 {\otimes} (\infty_-^* 1)^2 {\otimes} (\infty ^* 1)^{-4} \quad  &\text{generates }  \calM_{G_1, m \cdot f(G_2)}  &
\\ Q_{2,2} & := 2^{18} {\cdot} (\beta^*1)^2 {\otimes} (\infty_-^* x)^2{\otimes} (\infty ^* z^2)^{-4} \quad &\text{generates } \calM_{G_2, m \cdot f(G_2)} 
\\ Q_{1,2} & := (\infty ^* 1)^2{\otimes} (\infty_- ^* 1)^{-2} \quad  &\text{generates } \calM_{G_1, m \cdot D_0} 
\\ Q_{2,0} & := 2^{-12} {\cdot} (\infty ^* z^2)^2{\otimes} (\infty_- ^* x)^{-2} \quad  &\text{generates } \calM_{G_2, m \cdot D_0} \,.
\end{aligned}
\]

\subsection{Some residue disks of the biextension}
Let $p$ be a prime of good reduction for~$C$.
Given the divisors 
\[ 
D := \alpha - \infty \,, \quad E := 2\beta - 2\infty_- = ( m {\cdot} \circ \tr_c \circ \eta_*)(D) \quad \text{in } \mathrm{Div}(C_\Zpp)
\]
we use Lemma~\ref{lem:parametrization_residue_disk_M} to give parameters on the residue disks in $\calM^\times(\Zpp)_{\ol{D}, \ol{E}}$ and $T(\Zpp)_{\ol{D}}$, with $\ol D, \ol E$ the images of $D, E$ in $\mathrm{Div}(C_{\F_p})$.

We choose the ``base points'' $b_1 = \alpha, b_2 = \infty$, $b_3 = \beta, b_4 = \infty$, so that $b_1 \neq b_2$, $b_3 \neq b_4$ and $h^0(C_{\F_p}, b_1 + b_2) = h^0(C_{\F_p}, b_3 + b_4) = 1$. As in Equation~(\ref{eq:par_choice}), we define $x_\alpha = x{-}1$, $x_\infty = z$, $x_\beta = x$ and $x_{D, \beta} = x_{D, \infty_-} = 1$, $x_{D, \infty} = z^{-1}$. 
For $Q$ in $\{ \infty, \beta, \alpha \}$ and $a \in \F_p$ let $Q_a$ be the unique $\Zpp$-point of $C$ that is congruent to $Q$ modulo $p$ and such that $x_Q(Q_a) = ap \in \Zpp$. We have the bijections
\[
\begin{aligned}
\F_p^2 & \lto J(\Zpp)_{\ol{D}}\,, \quad \lambda \longmapsto D_\lambda:= D + \alpha_{\lambda_1} - \alpha + \infty_{\lambda_2}- \infty  = \alpha_{\lambda_1}  + \infty_{\lambda_2} - 2 \infty \\
\F_p^2 & \lto J(\Zpp)_{\ol{E}}\,, \quad \mu \longmapsto E_\mu:= E + \beta_{\mu_1} - \beta + \infty_{\mu_2}- \infty  = \beta + \beta_{\mu_1} + \infty_{\mu_2} - \infty - 2 \infty_- \,.
\end{aligned}
\]
Following (\ref{eq:def_s_DE_lambda}) for $\lambda, \mu \in \F_p^2$ we define
\[
s_{D,E}(\lambda, \mu) := (\beta^* 1) \otimes (\beta_{\mu_1}^* 1) \otimes (\infty_{\mu_2}^* \frac{z^2}{z-\lambda_2 p}) \otimes (\infty^* \frac{z^2}{z-\lambda_2 p})^{-1} \otimes (\infty_-^* 1)^{-2}
\]
that, by Proposition~\ref{prop_P_and_M_and_norm} and Remark~\ref{rem_no_base_point_deg_0}, generates $E_\mu^*\calO_{C_\Zpp}(D_\lambda) = \calM_{D_\lambda, E_\mu}$.
The points in $\calM^\times(\F_p)$ projecting to $(\ol D, \ol E)$ are
in bijection with the elements $\xi$ in $\F_p^\times$ and are exactly
the points $\xi \cdot {s_{D,E}(0,0)}$. Using $(\Zpp)^\times =
\F_p^\times \times (1 {+} p \F_p)$, for each $\xi \in \F_p^\times$ we
parametrise the residue disk of $\xi \cdot {s_{D,E}(0,0)}$ using
bijection in Lemma~\ref{lem:parametrization_residue_disk_M}
\[
\F_p^5 \lto \calM^\times(\Zpp)_{\xi \cdot {s_{D,E}(0,0)}}  \,, \quad  (\lambda_1, \lambda_2, \mu_1, \mu_2, \tau) \longmapsto (1 + p\tau) \xi {\cdot} s_{D,E} ( (\lambda_1, \lambda_2), (\mu_1, \mu_2)) \,.
\]
Since $(m{\cdot} \circ \tr_c \circ f)(D_\lambda) = E_{-2\lambda}$ then we have 
\[ 
T(\Zpp)_{\ol{D}} = \bigcup_{\lambda \in \F_p^2} T_{D_\lambda}(\Zpp) = \bigcup_{\lambda \in \F_p^2} \calM_{D_\lambda, E_{-2\lambda}}^\times(\Zpp)	\,.
\]
As $\xi$ varies in $ \F_p^\times$ the point $\xi {\cdot} s_{D, E}(0,0)$ varies in all the points in  $\calM^\times(\F_p)$ projecting to $(\ol D, \ol E)$ and we have the following bijection induced by parameters in $\xi {\cdot} s_{D, E}(0,0)$
\begin{subeqn} \label{eq:param_T_modulo_p2_example}
\F_p^3 \lto T(\Z_p)_{\xi s_{D,E}(0,0)}\,, \quad (\lambda_1,\lambda_2, \tau ) \longmapsto (1 + \tau p) {\cdot} \xi {\cdot} s_{D,E}((\lambda_1, \lambda_2), (-2\lambda_1, -2\lambda_2)) \,.
\end{subeqn}
If we apply (\ref{eq:example_tilde_j_b_1}) and~(\ref{eq:example_tilde_j_b_0}) to $Q = \alpha_\lambda$ and we use the symmetry of the Poincar\'e torsor explained in Proposition~\ref{prop_P_and_M_and_norm} and made explicit in Lemma~\ref{lem:M_symmetry} we obtain the following description of $\wt{j_{b,i}}$ on $C(\Zpp)_{\alpha_{\F_p}}$ when $p \neq 2$
\[
\wt{j_{b,1}}(\alpha_\lambda) = (1/4) \cdot s_{D,E}((\lambda, 0), (-2\lambda, 0)) \,,  \quad  \wt{j_{b,0}}(Q)  = (1/12) \cdot s_{D,E}((\lambda, 0), (-2\lambda, 0)) \,.
\]
If $p=5$ then $18$ and  ${-}1$ are $(p-1)$-th roots of unity in $(\Zpp)^\times$, thus $1/4 = (-1) (1 + p)$ and $1/12 = 3 (1+2p)$ in $(\Zpp)^\times = \F_p^\times \times (1 {+} p \F_p)$, hence 
\begin{subeqn}
\wt{j_{b,1}}(\alpha_\lambda) = -(1+p) \cdot s_{D,E}((\lambda, 0), (-2\lambda, 0)) \,,  \quad  \wt{j_{b,0}}(Q)  =  18\cdot (1+2p) \cdot s_{D,E}((\lambda, 0), (-2\lambda, 0)) \,.
\end{subeqn}

Since it is useful for computing the map $\kappa_\Z$ in the residue disks of $T(\Zpp)$ projecting to $\ol D$, we also apply Lemma~\ref{lem:parametrization_residue_disk_M} to the residue disks of $\calM^\times(\Zpp)$ lying over $(\ol D, 0)$, $(0, \ol E)$ and $(0,0)$. Hence for $\lambda, \mu \in \F_p^2$  we define the divisors on $C_{\Zpp}$
\[
D_\lambda^0:= \alpha_{\lambda_1} - \alpha + \infty_{\lambda_2}- \infty \,, \quad  
E^0_{\mu}:= \beta_{\mu_1} - \beta + \infty_{\mu_2}- \infty 
\]
and the sections
\[
\begin{aligned}
s_{D,0}(\lambda, \mu) & := (\beta_{\mu_1}^* 1) {\otimes} (\infty_{\mu_2}^*  \frac{z^2}{z{-}\lambda_2 p}){\otimes} (\beta^* 1)^{-1} {\otimes} (\infty^*  \frac{z^2}{z{-}\lambda_2 p} )^{-1} & \text{in }\calM^\times(D_\lambda, E_\mu^0)(\Zpp) \\
s_{0,E}(\lambda, \mu) & := (\beta^* 1) {\otimes} (\beta_{\mu_1}^* 1) {\otimes} (\infty_{\mu_2}^* \frac{z}{z{-}\lambda_2 p}) {\otimes} (\infty^* \frac{z}{z{-}\lambda_2 p})^{-1} {\otimes} (\infty_-^* 1)^{-2} & \text{in }\calM^\times(D_\lambda^0, E_\mu)(\Zpp) \\
s_{0,0}(\lambda, \mu) & := (\beta_{\mu_1}^* 1) {\otimes} (\infty_{\mu_2}^*  \frac{z}{z{-}\lambda_2 p} ){\otimes} (\beta^* 1)^{-1} {\otimes} (\infty^* \frac{z}{z{-}\lambda_2 p} )^{-1} & \text{in }\calM^\times(D_\lambda^0, E_\mu^0)(\Zpp)\,. \\
\end{aligned}
\]

\subsection{Geometry mod~$p^2$ of integral points}\label{sec:example_kappa}
From now on $p=5$. Let $\ol \alpha \in C(\Zpp)$ be the image of
$\alpha \in C(\Z)$. In this subsection we compute the composition
$\ol \kappa \colon \Z^2 \to T(\Zpp)_{\wt{j_{b,1}}(\ol \alpha)}$
of the map
$\kappa_\Z \colon \Z^2 \to T(\Z_p)_{\wt{j_{b,1}}(\ol \alpha)}$
in~(\ref{eq:kappa_Z}) and the reduction map
$T(\Z_p)_{\wt{j_{b,1}}(\ol \alpha)} \to
T(\Zpp)_{\wt{j_{b,1}}(\ol \alpha)}$. With a suitable choice of
parameters in $\calO_{T, \wt{j_{b,1}}(\ol \alpha)}$, the map
$\kappa_\Z$ is described by integral convergent power series
$\kappa_1, \kappa_2, \kappa_3 \in \Z_p \langle z_1, z_2 \rangle$
and~$\ol \kappa$, composed with the inverse of the
parametrization~(\ref{eq:param_T_modulo_p2_example}), is given by the
images $\ol {\kappa_1}, \ol {\kappa_2}, \ol {\kappa_3}$ of
$\kappa_1, \kappa_2, \kappa_3$ in $\F_p[z_1,z_2]$.

The divisor $j_b(\ol \alpha)$ is equal to the image of 
\[
\wt{G_t} := e_{0,1}G_1 + e_{0,2} G_2  \text{ with }e_{0,1}:= 6 \,, e_{0,2}:= 3
\]
in $J(\F_p)$ and 
\[
\tilde t :=  Q\strut_{1,0}^{6} \otimes Q\strut_{2,0}^{3} \otimes Q\strut_{1,1}^{6{\cdot}6} \otimes Q\strut_{1,2}^{6{\cdot}3} \otimes Q\strut_{2,1}^{3{\cdot}6} \otimes Q\strut_{2,2}^{3{\cdot}3}  \quad \text{in }\calM^\times(\wt{D_1}\,, m{\cdot}(D_0 + \eta_*\wt{G_t}))(\Z)  
\]
is a lift of $ \wt{j_{b,1}}(\ol \alpha)$.
The kernel of $J(\Z) \to J(\F_p)$ is a free $\Z$-module generated by 
\[
\wt{G_1} := e_{1,1} G_1 + e_{1,2}G_2\,, \quad \wt{G_2} := e_{2,1} G_1 + e_{2,2}G_2\,, \text{ with }e_{1,1}:= 16 \,, e_{1,2}:= 2 \,, e_{2,1}:= 0 \,, e_{2,2}:= 5\,.
\]
Let $\wt{G_{t,2}}$ be the divisor $m (D_0{+}\eta_*(\wt{G_t}))$ representing $(m{\cdot}{\circ} \tr_c {\circ} f)(\wt{G_t}) \in J^0(\Z)$. Following (\ref{eq:Pij}) for $i,j \in \{1,2 \}$ we define 
\[
\begin{tikzcd}
\displaystyle
P_{i,j} := \bigotimes_{m,l = 1}^2 Q\strut_{l,m}^{e_{i,l} {\cdot} e_{j,m}} \arrow[d,mapsto] & \displaystyle
R_{i, \tilde t} :=  \bigotimes_{l=1}^2 Q\strut_{l,0}^{e_{i,l}} 	\otimes \bigotimes_{m,l = 1}^2 Q \strut_{l,m}^{e_{i,l} {\cdot} e_{0,m}} \arrow[d,mapsto]
& \displaystyle
S_{\tilde t,j} := \bigotimes_{m,l = 1}^2 Q\strut_{l,m}^{e_{0,l} {\cdot} e_{j,m}} \arrow[d,mapsto] \\
(\wt{G_i},f(m\wt{G_j}))
& (\wt{G_i}, \wt{G_{t,2}})) 
& ( \wt{G_t},f(m\wt{G_j})) \,.
\end{tikzcd}
\]
Computations in $C_{\Zpp}$ show the following linear equivalences of divisors
\[
\wt{G_t} \sim D_{0,3}\,, \quad \wt{G_1} \sim D^0_{4,0} \,, \quad \wt{G_2} \sim D^0_{0,3}
\]
and applying Lemma~\ref{lem:norm_and_lin_equiv} and the functoriality of the norm we compute
\begin{subeqn}\label{eq:example_Pij}
\begin{aligned}
P_{1,1} & = (1 + 4p){\cdot}s_{0,0}((4,0),(2,0)) \quad&  \text{in } \calM^\times(\wt{G_1}, \wt{G_1})(\Zpp) = \calM^\times(D_{4,0}^0, E_{2,0}^0)(\Zpp) \,, \\
P_{1,2} & = (1 + 4p){\cdot}s_{0,0}((4,0),(0,4)) \quad&  \text{in } \calM^\times(\wt{G_1}, \wt{G_2})(\Zpp) = \calM^\times(D_{4,0}^0, E_{0,4}^0)(\Zpp) \,, \\
P_{2,1} & = (1 + 4p){\cdot}s_{0,0}((0,3),(2,0)) \quad&  \text{in } \calM^\times(\wt{G_2}, \wt{G_1})(\Zpp) = \calM^\times(D_{0,3}^0, E_{2,0}^0)(\Zpp) \,, \\
P_{2,2} & = (-1){\cdot} (1 + 2p){\cdot}s_{0,0}((0,3),(0,4)) \quad&  \text{in } \calM^\times(\wt{G_2}, \wt{G_2})(\Zpp) = \calM^\times(D_{0,3}^0, E_{0,4}^0)(\Zpp) \,, \\
R_{1,\tilde t} & = s_{0,E}((4,0),(0,4)) \quad&  \text{in } \calM^\times(\wt{G_1}, \wt{G_{t,2}})(\Zpp) = \calM^\times(D_{4,0}^0, E_{0,4})(\Zpp) \,, \\
R_{2,\tilde t} & = (1 + 4p){\cdot}s_{0,E}((0,3),(0,4)) \quad&  \text{in } \calM^\times(\wt{G_2}, \wt{G_{t,2}})(\Zpp) = \calM^\times(D_{0,3}^0, E_{0,4})(\Zpp) \,, \\
S_{\tilde t,1} & = s_{D,0}((0,3),(2,0)) \quad&  \text{in } \calM^\times(\wt{G_t}, \wt{G_1})(\Zpp) = \calM^\times(D_{0,3}, E_{2,0}^0)(\Zpp) \,, \\
S_{\tilde t,2} & = (-1) (1 + 4p){\cdot}s_{D,0}((0,3),(0,4)) \quad&  \text{in } \calM^\times(\wt{G_t}, \wt{G_2})(\Zpp) = \calM^\times(D_{0,3}, E_{0,4}^0)(\Zpp) \,, \\
\tilde t & = (-1){\cdot} (1 + 2p){\cdot}s_{D,E}((0,3),(0,4)) \quad&  \text{in } \calM^\times(\wt{G_t}, \wt{G_{t,2}})(\Zpp) = \calM^\times(D_{0,3}, E_{0,4})(\Zpp) \,. \\
\end{aligned}
\end{subeqn}
We now show these computations in the cases of $\wt{G_t}$ and $\tilde t$. The Riemann-Roch space relative to the divisor $\wt{G_t} {+} \infty {+} \alpha {-} D$ on $C_\Zpp$ is generated by the inverse of the rational function 
\[
h_1 := \frac{ x^9 {-} 5x^8 {-} 2 x^7 {+} 7x^6 {-} 9x^5 {-} 5x^4 {+} 14x^3  {+} 7x^2 {+} 13x {+} 1 {+} (x^6  {+} 9x^5 {-}  5 x^4 {+} 15x^3 {-} 5x^2 {+} 4x {+}14) y } {15 x^5 - x^4 + 4 x^3 + 19x^2 + 4 x + 9 }
\]
and indeed 
\[
\divisor( h_1 ) = \wt{G_t} - D_{0,3} = (6\gamma + 3 \infty_- -3 \alpha - 6\infty ) - ( \alpha + \infty_3 - 2 \infty )
\quad  \text{in }\mathrm{Div}(C_\Zpp)\,.
\]
Hence multiplication by $h_1$ gives an isomorphism $\calO_{C_\Zpp}(\wt{G_t}) \to \calO_{C_\Zpp}(D_{0,3})$ and by functoriality of the norm we get
\[
\begin{aligned}
\delta^* \calO_{C}(\wt{G_t}) &\to \delta^* \calO_{C_\Zpp}(D_{0,3})\,, \quad  & \delta^*1  \mapsto \delta^*(h_1) = h_1(\delta){\cdot} \delta^*1 = 12 {\cdot} \delta^*1 \,, \\
\beta^* \calO_{C}(\wt{G_t}) & \to \beta^* \calO_{C_\Zpp}(D_{0,3}) \,, \quad & \beta^*1 \mapsto \beta^*(h_1) = h_1(\beta) {\cdot} \beta^*1 = 18 {\cdot} \beta^*1 \,, \\
\infty ^* \calO_{C}(\wt{G_t}) & \to \infty^* \calO_{C_\Zpp}(D_{0,3}) \,, \quad & \infty^*z^6 \mapsto \infty^*(z^6h_1) = 
13 {\cdot} \infty^*\frac{z^2}{z-3p}\,, \\
\infty_- ^* \calO_{C}(\wt{G_t}) & \to \infty_-^* \calO_{C_\Zpp}(D_{0,3}) \,, \quad & \infty_-^*z^{-3} \mapsto \infty_-^*(z^{-3}h_1) = (z^{-3}h_1)(\infty_-) {\cdot} \infty_-^*1 = 
6 {\cdot} \infty_-^* 1 \,.
\end{aligned}
\]
Since $\wt{G_{t,2}} = 12 \delta + 4 \infty_- - 6 \beta - 10 \infty$, the above isomorphisms, tensored with the exponents, give the canonical isomorphism
\begin{subeqn}\label{eq:example_functoriality_norm}
\calM(\wt{G_t}, \wt{G_{t,2}}) = \wt{G_{t,2}}^*  \calO_{C_\Zpp}(\wt{G_t})  \to \wt{G_{t,2}}^*  \calO_{C_\Zpp}(D_{0,3}) = \calM(D_{0,3}, \wt{G_{t,2}}) 
\end{subeqn}
\[
\tilde t {=} 14{\cdot} (\delta^*1)^{12}{\otimes} (\beta^*1)^{-6} {\otimes}  (\infty^*z^6)^{-10} {\otimes} (\infty_-^*z^{-3})^4  \mapsto 14{\cdot} (\delta^*1)^{12}{\otimes} (\beta^*1)^{-6} {\otimes} 
(\infty^*\frac{z^2}{z{-}3p})^{-10} {\otimes} (\infty_-^*1)^4\,.
\]
The Riemann-Roch space relative to the divisor $\wt{G_{t,2}} {+} \infty {+} \alpha {-} E$ on $C_\Zpp$ is generated by the inverse of the rational function 
\[
\begin{aligned}
h_2 {:}{=} \frac{ x^{17}  {-} 8 x^{16} {+} x^{15} {-} 4x^{14} {+} 7x^{13} {+} 4x^{12} {+} 12 x^{11} {+} x^{10} {+}2 x^{9} -5x^8 {+} x^7 {+} 3 x^6 {+} 12 x^5 {-} 6 x^4 {-} 6 x^3 {+} 4 x^2 {-}6} { 20 x^8 - 6 x^7 }\\
{+} \frac{ 10 x^2 {+} (x^{15} {+} 6x^{14} {-} 5x^{13} {-} x^{12} {-} 2x^{11} {+} 14x^{10} {-} 4x^9 {+} 14x^8 {+} 3x^7 {+} 8x^6 {-} 6x^5 {-} 3x^4 {+} 4x^3 {+} 13x^2 {-} x {-} 7 )y}{ 20 x^9 - 6 x^8 }
\end{aligned}
\]
and indeed 
\[
\divisor( h_2 ) = \wt{G_{t,2}} - E_{0,4} = (12 \delta + 4 \infty_- - 6 \beta - 10 \infty) - ( 2 \beta + \infty_4 - \infty - \infty_- )
\quad  \text{in }\mathrm{Div}(C_\Zpp)\,.
\]
Following the recipe in Section~\ref{sec:iso_norm_along_equiv_D} that describes the map~(\ref{eq:phi_dirty}), we consider the following rational section of $\calO_{C_\Zpp}(D_{0,3})$
\[
l := \frac{ 10x^4 + x^3 + 17x + 14 + (15x + 9)y }{ 10x^4 + 16x^3 + 7x^2 + 7x + 10 } \,.
\]
since it generates $\calO_{C_\Zpp}(D_{0,3})$ in a neighborhood of the supports of $\wt{G_{t,2}}$ and $E_{0,4}$. Then
\[
\divisor(l) = 3{\cdot}(-1,1) {+} (17,23) {+} (15,10) {-} 2{\cdot}(12,23) {-} 2 {\cdot}(5,20) {-} (0,1) \text{ in } \mathrm{Div}(V_{1,\Zpp}) {\subset} \mathrm{Div}(C_{\Zpp})\,.
\]
Hence by Lemma~\ref{lem:norm_and_lin_equiv} the canonical isomorphism 
\[
\calM(D_{0,3}, \wt{G_{t,2}})  = \wt{G_{t,2}}^* \calO_{C_\Zpp}(D_{0,3}) \lto E_{0,4}^* \calO_{C_\Zpp}(D_{0,3}) = \calM(D_{0,3}, E_{0,4}) 
\]
described in Equation~(\ref{eq:norm_iso_equiv_div_abstract}) sends
\begin{subeqn}\label{eq:example_iso_norm_along_equiv_D}
\wt{G_{t,2}}^* l  \longmapsto h_2(\divisor(l)) \cdot E_{0,4}^* l = 14 \cdot E_{0,4}^* l \,.
\end{subeqn}
where
\[
\begin{aligned}
\wt{G_{t,2}}^* l & := (\delta^*l)^{12}{\otimes} (\beta^*l)^{-6} {\otimes} 
(\infty^*l)^{-10} {\otimes} (\infty_-^*l)^4  = - (\delta^*1)^{12}{\otimes} (\beta^*1)^{-6} {\otimes} 
(\infty^*\frac{z^2}{z{-}3p})^{-10} {\otimes} (\infty_-^*1)^4 \,, \\ 
E_{0,4}^* l & := (\beta^* l)^2 {\otimes} (\infty_4^* l) {\otimes}(\infty^* l)^{-1} {\otimes} (\infty^*_- l)^{-2} = 16 {\cdot}  (\beta^* 1)^2 {\otimes} (\infty_4^*\frac{z^2}{z{-}3p} ) {\otimes}(\infty^* \frac{z^2}{z{-}3p})^{-1} {\otimes}(\infty_-^* 1)^{-2} \,.
\end{aligned}
\]
Equations~(\ref{eq:example_functoriality_norm}) and~(\ref{eq:example_iso_norm_along_equiv_D}) 
imply that $\tilde t = -(1 + 2p) {\cdot} s_{D,E}((0,3),(0,4))$. 

Let $\ol{A_{\tilde t}}, \ol{B_{\tilde t}}, \ol C$ and $\ol{D_{\tilde t}}$ be the compositions of the reduction map $\calM^\times(\Z_p) \to \calM(\Zpp)$ and respectively $A_{\tilde t}, B_{\tilde t}, C$ and $D_{\tilde t}$, defined in~(\ref{eq:A_and_B}), (\ref{eq:C}) and (\ref{eq:D_ttilde_n}). Using (\ref{eq:explicit_partial_group_laws}) and (\ref{eq:example_Pij}) we get, for $n$ in $\Z^2$,
\begin{subeqn}\label{eq:example_ABCD}
\begin{aligned}
\ol{A_{\tilde t}}(n) & = (-1)^{n_2}(1+(4n_2)t) \cdot s_{D,0}((0,3),(2n_1 ,4n_2 ))      \,, \\
\ol{B_{\tilde t}}(n) & = (1 + (4n_2)p)s_{0,E}((4n_1 ,3n_2 ), (0,4) )   \,, \\
\ol C (n) & =   (-1)^{n_2^2} (1 + (4n_1^2 + (4+4) n_1n_2  + 2 n_2^2)p) \cdot  s_{0,0}((4n_1,3n_2),(2n_1, 4n_2))    \,,\\
\ol{D_{\tilde t}}(n) & = - (1 + (4n_1^2 + 3 n_1n_2  + 2 n_2^2 + 3 n_2 + 2)p) \cdot s_{D,E}((4n_1,3 + 3n_2),(2n_1,4+4n_2))\,, \\
\ol{\kappa}(n) & = - (1 + (4n_1^2 + 3 n_1n_2  + 2 n_2^2 + 2 n_2 + 2)p) \cdot s_{D,E}((n_1,3 + 2n_2),(3n_1,4+n_2)) \,,
\end{aligned}
\end{subeqn}
hence, using the bijection~(\ref{eq:param_T_modulo_p2_example}),
\begin{subeqn}\label{eq:example_kappa}
 \ol{\kappa_1} = z_1 \,, \quad  \ol{\kappa_2} = 3 + 2z_2 \,, \quad \ol{\kappa_3} =4z_1^2 + 3 z_1z_2  + 2 z_2^2 + 2 z_2 + 2  \,.
\end{subeqn}

\subsection{The rational points with a specific image mod~5.}
By~(\ref{eq:example_ABCD}) the image in $T(\F_p)$ of a point $\pm \ol{D_{\tilde t}}(n)$ for $n \in \Z^2$ is always of the form $\pm s_{D,E}(0,0)$, hence, looking at (\ref{eq:example_tilde_j_b_0}) we see that there is no point $T(\Z)$ with reduction $\wt{j_{b,0}}(\ol \alpha) \in T(\F_p)$. Hence $C(\Z)_{\ol \alpha} = U_1(\Z)_{\ol \alpha}$.

Let $f_1,f_2 \in \calO(\wt{T}_t^p)^{\wedge_p}$ be generators of the kernel of 
$\wt{j_{b,1}}^*\colon \calO(\wt{T}_t^p)^{\wedge_p}\to
\calO(\wt{U}_u^p)^{\wedge_p}$ as in Section~\ref{sec:closure-finiteness}.
The bijection (\ref{eq:param_T_modulo_p2_example}) gives an isomorphism  $\F_p\otimes\calO(\wt{T}_t^p) = \F_p[\lambda_1, \lambda_2, \tau]$
and since the images $\ol{f_1}, \ol{f_2}$ of $f_1,f_2$ in $\F_p\otimes\calO(\wt{T}_t^p)$ are generators of the kernel of
$\wt{j_{b,1}}^*\colon \F_p \otimes \calO(\wt{T}_t^p)^{\wedge_p}\to
\F_p \otimes \calO(\wt{U}_u^p)^{\wedge_p}$ we can suppose that 
\[
\ol{f_1} = \lambda_2 \,, \quad \ol{f_2} = \tau - 1 \,.
\]
By (\ref{eq:example_kappa}) we have 
\[
\kappa^*\ol{f_1} = \ol{\kappa_2} = 3 + 2\\z_2  \,, \quad \kappa^*\ol{f_2} = \ol{\kappa_3}-1 = 4z_1^2 + 3 z_1z_2  + 2 z_2^2 + 2 z_2 + 1  \,.
\]
Let $A$ be $\Z_p\langle z_1,z_2\rangle/(\kappa^*f_1, \kappa^*f_2)$. Then the ring
\begin{subeqn}\label{eq:example_F_p_algebra_A}
\ol{A} := A/pA = \F_p[z_1,z_2]/(\kappa^*\ol{f_1}, \kappa^*\ol{f_2}) =
\F_p[z_1,z_2] /(z_2 -1 \,,  4z_1^2 + 3z_1 ) 
\end{subeqn}
has dimension $2$ over $\F_p$, hence by Theorem~\ref{thm:finiteness} $U(\Z)_{\ol \alpha}$ contains at most $2$ points. Since both 
\[
\alpha \quad \text{and} \quad (12/7 , 20/7) \in V_1(\Z[1/7]) 
\]
reduce to $\ol \alpha$ we deduce that $C(\Z)_{\ol \alpha} = U_1(\Z)_{\ol \alpha}$ is made of the these two points.
\subsection{Determination of all rational points}
Denoting $(3,-1) \in V_1(\F_p) \subset C(\F_p)$ as $\eps$ we have
\[
C(\F_p) = \{ \ol{\infty}\,, \ol{\infty_-}\,, \ol \alpha\,, \iota(\ol \alpha)\,, \eta(\ol \alpha)\,, (\iota \circ \eta)(\ol \alpha)\,,\ol \gamma\,, \iota(\ol \gamma)\,, \eta(\ol \gamma)\,, (\iota \circ \eta)(\ol \gamma)\,, \eps\,, \iota(\eps)  \}\,.
\]
Using that for any point $Q$ in $C(\F_p)$ the condition $T(\Z)_{\wt{j_{b,i}}(Q)} = \emptyset$ implies $U_i(\Z)_Q = \emptyset$ we get
\[
U_0(\Z)_{\ol{\infty}} = U_0(\Z)_{\ol{\infty_-}}= U_1(\Z)_{\eps} = U_1(\Z)_{\iota(\eps)} = U_1(\Z)_{\ol \gamma} =U_1(\Z)_{\eta (\ol\gamma)} = U_1(\Z)_{\eta (\ol \gamma)} = U_1(\Z)_{\iota\eta(\ol \gamma)} = \emptyset \,.
\]
Applying our method to $\ol{\infty}$ we discover that $U_1(\Z)_{\ol \infty}$ contains at most $2$ points and the same holds for $U_1(\Z)_{\ol{\infty_-}}$. Moreover the action of $\langle \eta, \iota \rangle $ on $C(\Z)$ tells that $U_1(\Z)_{\iota(\ol \alpha)}$, $U_1(\Z)_{\eta(\ol \alpha)}$ and $U_1(\Z)_{\eta \iota(\ol \alpha)}$ are sets containing exactly $2$ elements. Hence
\[
U_1(\Z) = U_1(\Z)_{\ol \alpha} \cup U_1(\Z)_{\iota(\ol \alpha)}\cup U_1(\Z)_{\eta(\ol \alpha)} \cup U_1(\Z)_{\eta \iota(\ol \alpha)} \cup U_1(\Z)_{\ol{\infty_-}} \cup U_1(\Z)_{\ol{\infty}}
\]
contains at most $12$ elements. Looking at the orbits of the action of  $\langle \eta, \iota \rangle $ on $U_1(\Z)$ we see that $\# U_1(\Z) \equiv 2 \pmod 4$, hence $\# U_1(\Z) \leq 10$. Since $U_1(\Z)$ contains $\infty, \infty_-$ and all the images by $\langle \eta, \iota\rangle$ of $U_1(\Z)_{\ol \alpha}$ we conclude that $\# U_1(\Z) = 10$.

Applying our method to the point $\ol \gamma$ we see that $U_0(\Z)_{\ol \gamma}$ contains at most two points, one of them being $\gamma$. Moreover solving the equations $\kappa^*\ol{f_i} = 0$ we see that if there is another point $\gamma'$ in $U_0(\Z)_{\ol \gamma}$ then there exist $n_1, n_2 \in \Z$ such that
\[
j_b(\gamma') = 39 G_1 + 17 G_2 + 5n_1\wt{G_1} + 5 n_2 \wt{G_2}\,.
\]  
Using the Mordell-Weil sieve (see \cite{Muller}) we derive a contradiction: for all integers $n_1, n_2$, the image in $J(\F_7)$ of  $39 G_1 {+} 17 G_2 {+} 5n_1\wt{G_1} {+} 5 n_2 \wt{G_2}$ is not contained in $j_b(C(\F_7))$. We deduce that
\[
U_0(\Z)_{\ol \gamma} = \{ \gamma \}\,.
\]

Applying our method to to $\eps$ we see that $U_0(\Z)_\eps$ contains at most $2$ points corresponding to two different solutions to the equations $\kappa^*\ol{f_i} = 0$. We can see that one of the two solutions does not lift to a point in $U_0(\Z)_\eps$ in the same way we excluded the existence of $\gamma' \in U_0(\Z)_{\ol \gamma}$. Hence $U_0(\Z)_\eps$ has cardinality at most $1$. Using that for every $Q \in C(\F_p)$ and every automorphism $\omega$ of $C$ we have $\# U_0(\Z)_{Q} = \# U_0(\Z)_{\omega(Q)}$, we deduce that 
\[
U_0(\Z) = U_0(\Z)_{\ol \gamma} \cup U_0(\Z)_{\iota(\ol \gamma)}\cup U_0(\Z)_{\eta(\ol \gamma)} \cup U_0(\Z)_{\eta \iota(\ol \gamma)} \cup U_0(\Z)_{\eps} \cup U_0(\Z)_{\iota(\eps)}
\]
contains at most $6$ points. Looking at the orbits of the action of  $\langle \eta, \iota \rangle $ on $U_0(\Z)$ we see that $\# U_0(\Z) \equiv 0 \pmod 4$, hence $\# U_4(\Z) \leq 4$, and since $U_0(\Z)$ contains the orbit of $\gamma$ we conclude that $\# U_0(\Z) = 4$. Finally
\[
\# C(\Z) = \# U_0(\Z) + \# U_1(\Z) = 4 + 10 = 14 \,.
\]

\section{Some further remarks}\label{sec:rems}

\subsection{Complex uniformisations of some of the objects involved}
\label{sec:fund_grps}
Let $C$ be a projective curve over~$\Q$, smooth, and geometrically
irreducible, and let $g$ be its genus. The universal cover of
$P^\times(\C)$ is described in~\cite{Be-Ed}, Propositions~4.5
and~4.6. The covering space, denoted~$D_\tau$, is
$\rmM_{1,g}(\C)\times\rmM_{g,1}(\C)\times\C $, hence a $\C$-vector
space of dimension $2g+1$. The biextension structure on
$\rmM_{1,g}(\C)\times\rmM_{g,1}(\C)\times\C $  is trivial, that is,
for all $x$, $x_1$, $x_2$ in~$\rmM_{1,g}(\C)$, all $y$, $y_1$, $y_2$
in~$\rmM_{g,1}(\C)$, and all $z_1$, $z_2$ in~$\C$, we have:
\begin{subeqn} \label{eq:trivial_biext}
\begin{aligned}
  (x_1,y,z_1) +_1 (x_2,y,z_2) & = (x_1+x_2,y,z_1+z_2)\,,\\
  (x,y_1,z_1) +_2 (x,y_2,z_2) & = (x,y_1+y_2,z_1+z_2)\,.
\end{aligned}
\end{subeqn}
The fundamental group $\pi_1(P^\times(\C),1)$ is
\begin{subeqn}
\label{eq:pi_1_Px}
Q^u(\Z) := 
\left\{
\begin{pmatrix}
1 & x & z\\
0 & 1_{2g} & y\\
0 & 0 & 1    
\end{pmatrix} :
\text{$x\in\rmM_{1,2g}(\Z)$, $y\in\rmM_{2g,1}(\Z)$, $z\in \Z$}
\right\},
\end{subeqn}
also known as a Heisenberg group. Its action on $D_\tau$ is given
in~\cite[(4.5.3)]{Be-Ed}.

Now recall the definition of $T$ in~(\ref{eq:def_T}). As
$\rmM_{2g,1}(\Z)$ is the lattice of~$J(\C)$, and $\rmM_{1,2g}(\Z)$ the
lattice of~$J^\vee(\C)$, each $f_i$ is given by an antisymmetric
matrix $f_{i,\Z}$ in $\rmM_{2g,2g}(\Z)$ such that for all $y$ in
$\rmM_{2g,1}(\Z)$ we have $f_i(y) = y^t{\cdot}f_{i,\Z}$, and by a
complex matrix $f_{i,\C}$ in $\rmM_{g,g}(\C)$ such that for all $v$ in
$\rmM_{g,1}(\C)$, for each $i$ we have $f_i(v)=v^t{\cdot}f_{i,\C}$
in~$\rmM_{1,g}(\C)$. For more details about this description of the
$f_i$ see the beginning of~\cite [\S4.7]{Be-Ed}. Then we have
\begin{subeqn}\label{eq:pi_1_T}
\pi_1(T(\C)) =
\left\{
\begin{pmatrix}
1_{\rho-1} & m{\cdot}f(y) & z\\
0 & 1_{2g} & y\\
0 & 0 & 1    
\end{pmatrix} :
\text{$y\in\rmM_{2g,1}(\Z)$, $z\in \rmM_{\rho-1,1}(\Z)$}
\right\},  
\end{subeqn}
with $m{\cdot}f(y)\in\rmM_{\rho-1,2g}(\Z)$ with rows the
$m{\cdot}y^t{\cdot}f_{i,\Z}$.  So, $\pi_1(T(\C))$ is a central
extension of $\rmM_{2g,1}(\Z)$ by $\rmM_{\rho-1,1}(\Z)$, with
commutator pairing sending $(y,y')$ to
$(2my^t{\cdot}f_{i,\Z}{\cdot}y')_i$.

The universal covering $\wt{T(\C)}$ is given by
\begin{subeqn}\label{eq:univ_cover_T}
  \begin{aligned}
\wt{T(\C)} & =
  \{(m{\cdot}(c+f(v)),v,w) :
  \text{$v\in \rmM_{g,1}(\C)$, $w\in \rmM_{\rho-1,1}(\C)$}\}\\
  & \subset
  \rmM_{\rho-1,g}(\C)\times\rmM_{1,g}(\C)\times\rmM_{\rho-1,1}(\C)\,,    
  \end{aligned}
\end{subeqn}
with $m{\cdot}(c+f(v))\in \rmM_{\rho-1,g}(\C)$ with rows the
$m{\cdot}(\wt{c_i}+v^t{\cdot}f_{i,\C})$ with $\wt{c_i}$ a lift of
$c_i$ in~$\rmM_{1,g}(\C)$. The action of $\pi_1(T(\C),1)$ on
$\wt{T(\C)}$ is given again, with the necessary changes, by~\cite[(4.5.3)]{Be-Ed}.

Now that we know $\pi_1(T(\C),1)$ we investigate which quotient of
$\pi_1(C(\C),b)$ it is, via $\wt{j_b}\colon C(\C)\to T(\C)$. We
consider the long exact sequence of homotopy groups induced by the
$\C^{\times,\rho-1}$-torsor $T(\C)\to J(\C)$, taking into account that
$\C^{\times,\rho-1}$ is connected and that $\pi_2(J(\C))=0$:
\begin{subeqn}\label{eq:hom_top_les}
\begin{tikzcd}
  \pi_1(\C^{\times,\rho-1},1) \arrow[r, hook] &
  \pi_1(T(\C),1) \arrow[r, two heads] &
  \pi_1(J(\C),0)\,.
\end{tikzcd}
\end{subeqn}
Again we see that $\pi_1(T(\C),1)$ is a central extension of the free
abelian group $\pi_1(J(\C),0)$ by~$\Z^{\rho-1}$, and from the matrix
description we know that the $i$th coordinate of the commutator
pairing is given by
$mf_i\colon \rmH_1(J(\C),\Z)\to
\rmH_1(J^\vee(\C),\Z)=\rmH_1(J(\C),\Z)^\vee$.  The $\Z$-module of
antisymmetric $\Z$-valued pairings on $\rmH_1(J^\vee(\C),\Z)$ is
$\bigwedge^2\rmH^1(J(\C),\Z)=\rmH^2(J(\C),\Z)$, and $mf_i$ is the
cohomology class (first Chern class) of the $\C^\times$-torsor~$T_i$:
\begin{subeqn}\label{eq:c1_Ti}
  mf_i = c_1(T_i) \quad\text{in $\rmH^2(J(\C),\Z)$.}
\end{subeqn}
There is a central extension
\begin{subeqn}\label{eq:univ_centr_ext}
\begin{tikzcd}
  \rmH_2(J(\C),\Z) \arrow[r, hook]&
  E \arrow[r, two heads] &
  \pi_1(J(\C),0)
\end{tikzcd}
\end{subeqn}
that is universal in the sense that every central extension of
$\pi_1(J(\C),0)$ by a free abelian group arises by pushout from
$\rmH_2(J(\C),\Z)$. We denote
\begin{subeqn}\label{eqn:G=pi_1_C}
G:=\pi_1(C(\C),b)\,.
\end{subeqn}
The map $j_b\colon C\to J$ gives $G\to\pi_1(J(\C),0)$, and this is the
maximal abelian quotient.  The second quotient in the descending
central series of $G$ gives the central extension:
\begin{subeqn}\label{eqn:2nd_dcs_G}
\begin{tikzcd}
  {}[G,G]/[G,[G,G]] \arrow[r, hook]&
  G/[G,[G,G]] \arrow[r, two heads]&
  G/[G,G] = G^\mathrm{ab} = \pi_1(J(\C),0)\,.
\end{tikzcd}
\end{subeqn}
This extension (\ref{eqn:2nd_dcs_G}) arises from
(\ref{eq:univ_centr_ext}) by pushout via a morphism from
$\rmH_2(J(\C),\Z)$ to $[G,G]/[G,[G,G]]$:
\begin{subeqn}\label{eqn:push-out-pi_1}
\begin{tikzcd}
  \rmH_2(J(\C),\Z) \arrow[r, hook] \arrow[d] &
  E \arrow[r, two heads] \arrow[d] &
  G^\mathrm{ab} \arrow[d, equals] \\
  {}[G,G]/[G,[G,G]] \arrow[r, hook] &
  G/[G,[G,G]] \arrow[r, two heads] &
  G^\mathrm{ab} \,.
\end{tikzcd}
\end{subeqn}
The left vertical arrow is surjective because commutators of lifts in
$E$ of elements of $G^\mathrm{ab}$ are mapped to the commutators of
lifts in $G/[G,[G,G]]$, and so give generators of $[G,G]/[G,[G,G]]$.

From the usual presentation of $G$ with generators
$\alpha_1,\beta_1,\ldots,\alpha_g,\beta_g$, with the only relation
$[\alpha_1,\beta_1]\cdots[\alpha_g,\beta_g]=1$, we see that the
obstruction in lifting $G\to G^\mathrm{ab}$ to $G\to E$
in the top row of~(\ref{eqn:push-out-pi_1}) is the image of
$[\alpha_1,\beta_1]\cdots[\alpha_g,\beta_g]$ in
$\rmH_2(J(\C),\Z)$. This image is a generator of the image of
$\rmH_2(C(\C),\Z)$ under~$j_b$. So the pushout
in~(\ref{eqn:push-out-pi_1}) factors through the pushout by the
quotient  of $\rmH_2(J(\C),\Z)$ by $\rmH_2(C(\C),\Z)$:
\begin{subeqn}\label{eqn:push-out-pi_1-2}
\begin{tikzcd}
  \rmH_2(J(\C),\Z)/\rmH_2(C(\C),\Z) \arrow[r, hook] \arrow[d, two heads] &
  E' \arrow[r, two heads] \arrow[d, two heads] &
  G^\mathrm{ab} \arrow[d, equals] \\
  {}[G,G]/[G,[G,G]] \arrow[r, hook] &
  G/[G,[G,G]] \arrow[r, two heads] &
  G^\mathrm{ab} \,.
\end{tikzcd}
\end{subeqn}
Using again the presentation of $G$ we can split this morphism of
extensions, and, using that $\rmH_2(J(\C),\Z)/\rmH_2(C(\C),\Z)$ is
generated by commutators of lifts of elements of $G^\mathrm{ab}$,
conclude that all vertical arrows in~(\ref{eqn:push-out-pi_1-2}) are
isomorphisms.

In particular, we have that $[G,G]/[G,[G,G]]$ is the same as
$\rmH_2(J(\C),\Z)/\rmH_2(C(\C),\Z)$. From (\ref{eq:c1_Ti}) we see that
the sub-$\Z$-module of $\rmH^2(J(\C),\Z(1))$ (note the Tate twist, now
we take the Hodge structures into account) spanned by the $mf_i$ is
obtained in 4 steps: take the kernel of
$\rmH^2(J(\C),\Z(1))\to\rmH^2(C(\C),\Z(1))$, take the $(0,0)$-part
(then we are dealing with $\Hom(J(\C),J^\vee(\C))^+$), then
$\Gal(\ol{\Q}/\Q)$ acts (because $\Hom(J(\C),J^\vee(\C))^+$ and
$\Hom(J_\Qbar,J^\vee_\Qbar)^+$ are equal), through the Galois group of
a finite extension of~$\Q$, take the invariants (because we only use
morphisms $f_i$ defined over~$\Q$), then take the image of the
multiplication by $m$ on that.

Dually, this means that $\pi_1(T(\C),1)$ arises as the pushout
\begin{subeqn}
\begin{tikzcd}
  \rmH_2(J(\C),\Z(-1))/\rmH_2(C(\C),\Z(-1)) \arrow[r, hook] \arrow[d] &
  G/[G,[G,G]] \arrow[r, two heads] \arrow[d] &
  G^\mathrm{ab} \arrow[d, equals] \\
  \left((\rmH_2(J(\C),\Z(-1))/\rmH_2(C(\C),\Z(-1)))_{(0,0)}\right)_{\Gal(\ol{\Q}/\Q)}
  \arrow[r, hook] &
  \pi_1(T(\C),1) \arrow[r, two heads] &
  G^\mathrm{ab}\,,
\end{tikzcd}
\end{subeqn}
where the subscript $(0,0)$ means the largest quotient of type
$(0,0)$, where the subscript $\Gal(\ol{\Q}/\Q)$ means co-invariants
modulo torsion, and where the left vertical map is $m$ times the
quotient map. We repeat that the morphism from $\pi_1(C(\C))=G$ to
$\pi_1(T(\C),1)$ given by the middle vertical map is induced by
$\wt{j_b}\colon C(\C)\to T(\C)$.

\subsection{Finiteness of rational points}\label{sec:finiteness}

In this section we reprove Faltings's finiteness result~\cite{Fal} in the special
case where $r<g+\rho-1$. This was already done in~\cite{Ba-Do-1},
Lemma~3.2 (where the base field is either $\Q$ or imaginary
quadratic).  We begin by collecting some ingredients on
good formal coordinates of the $\Gm$-biextension $P^{\times,\rho-1}\to J\times
J^{\vee,\rho-1}$ over~$\Q$, and on what $C$ looks like in such coordinates.

\subsubsection{Formal trivialisations}\label{sec:formal_triv}
Let $A$, $B$ and $G$ be connected smooth commutative group schemes
over a field $k\supset\Q$, and let $E\to A\times B$ be a commutative
$G$-biextension. Let $a$ be in $A(k)$, $b\in B(k)$ and $e\in
E(k)$. For $n\in\N$, let $A^{a,n}$ be the $n$th infinitesimal
neighborhood of $a$ in~$A$, hence its coordinate ring
is~$\calO_{A,a}/m_a^{n+1}$. We use similar notation for $B$ with~$b$,
and $E$ with~$e$, and also for the points $0$ of $A$, $B$ and~$E$,
and, similarly, the formal completion of $A$ at $a$ is denoted
by~$A^{a,\infty}$, etc. We also use such notation in a relative
context, for example, for the group schemes $E\to B$ and $E\to A$. We
view completions as $A^{a,\infty}$ as set-valued functors on the
category of local $k$-algebras with residue field $k$ such that every
element of the maximal ideal is nilpotent. For such a $k$-algebra~$R$,
$A^{a,\infty}(R)$ is the inverse image of $a$ under $A(R)\to
A(k)$. Then $A^{0,\infty}$ is the formal group of~$A$.

We now want to show that the formal $G^{0,\infty}$-biextension
$E^{0,\infty}\to A^{0,\infty}\times B^{0,\infty}$ is isomorphic to the
trivial biextension (the object
$G^{0,\infty}\times A^{0,\infty}\times B^{0,\infty}$ with $+_1$ given
by addition on the 1st and 2nd coordinate, and $+_2$ by addition on
the 1st and 3rd coordinate). As exp for $A^{0,\infty}$ gives a
functorial isomorphism
$T_{A/k}(0)\otimes_k{\Ga}_k^{0,\infty}\to A^{0,\infty}$, and similarly
for $B$ and~$G$, it suffices to prove this triviality for
$\Ga^{0,\infty}$-biextensions of $\Ga^{0,\infty}\times \Ga^{0,\infty}$
over~$k$. One easily checks that the group of automorphisms of the
trivial $\Ga^{0,\infty}$-biextension of
$\Ga^{0,\infty}\times \Ga^{0,\infty}$ over~$k$ that induce the
identity on all three $\Ga^{0,\infty}$'s is~$(k,+)$, with $c\in k$ acting
as $(g,a,b)\mapsto (g+cab,a,b)$. As this group is commutative, it then
follows that the group of automorphisms of the $G^{0,\infty}$-biextension
$E^{0,\infty}\to A^{0,\infty}\times B^{0,\infty}$ that induce identity
on $G^{0,\infty}$, $A^{0,\infty}, $and~$B^{0,\infty}$, is equal to the
$k$-vector space of $k$-bilinear maps
$T_{A/k}(0)\times T_{B/k}(0) \to T_{G/k}(0)$. This indicates how to
trivialise~$E^{0,\infty}$. We choose a section~$\tilde{e}$ of the
$G$-torsor $E\to A\times B$ over the closed subscheme
$A^{0,1}\times B^{0,1}$ of $A\times B$:
\[
\begin{tikzcd}
  & E \arrow[d] \\
A^{0,1}\times B^{0,1} \arrow[r] \arrow{ru}{\tilde{e}} & A\times B\,,
\end{tikzcd}\quad
\text{with $\tilde{e}(0,0)=e$ in~$E(k)$.}
\]
Note that
\[
  \calO(A^{0,1}\times B^{0,1})=
  (k\oplus m_{A^{0,1}})\otimes(k\oplus m_{B^{0,1}})
  =
  k\oplus m_{A^{0,1}}\oplus m_{B^{0,1}}\oplus(m_{A^{0,1}}\otimes m_{B^{0,1}})\,.
\]
Hence two such~$\tilde{e}$ differ by a $k$-algebra morphism from
$k\oplus m_{G^{0,2}}=k\oplus m_{G^{0,1}}\oplus \Sym^2 m_{G^{0,1}}$
(use the exponential map) to
$k\oplus m_{A^{0,1}}\oplus m_{B^{0,1}}\oplus(m_{A^{0,1}}\otimes
m_{B^{0,1}})$, hence by a triple of $k$-linear maps from $m_{G^{0,1}}$ to
$m_{A^{0,1}}$, $m_{B^{0,1}}$, and $m_{A^{0,1}}\otimes m_{B^{0,1}}$.
The linear maps $m_{G^{0,1}} \to m_{A^{0,1}}$ and
$m_{G^{0,1}} \to m_{B^{0,1}}$ correspond to the differences on
$A^{0,1}\times B^{0,0}$ and on $A^{0,0}\times B^{0,1}$,
respectively. There are unique such linear maps such that the adjusted
$\tilde{e}$ is compatible with the given trivialisations of
$E\to A\times B$ over $A^{0,1}\times B^{0,0}$ and over
$A^{0,0}\times B^{0,1}$. In geometric terms, $\tilde{e}$, assumed to
be adjusted, is then a splitting of
$T_G(0)_B\into T_{E/B}(0)\onto T_A(0)_B$ over $B^{0,1}$ that is
compatible with the already given splitting over $0\in B(k)$, and it
is also a splitting of $T_G(0)_A\into T_{E/A}(0)\onto T_B(0)_A$ over
$A^{0,1}$ that is compatible with the already given splitting over
$0\in A(k)$. The splitting over $B^{0,1}$ gives an isomorphism from
$(T_G(0)\oplus T_A(0))_{B^{0,1}}$ to $(T_{E/B})_{B^{0,1}}$. So the
exponential map, for~$+_1$, for the pullback to $B^{0,1}$ of $E\to B$,
gives an isomorphism of formal groups over~$B^{0,1}$:
\[
  \begin{tikzcd}
  \left((T_G(0)\oplus T_A(0))\otimes_k \Ga^{0,\infty}\right)_{B^{0,1}}
\arrow[r, hook, two heads] 
  & E_{B^{0,1}}^{0,\infty} \,.
  \end{tikzcd}
\]
Viewing $E_{B^{0,1}}^{0,\infty}$ as the tangent space at the zero
section of the pullback to~$A^{0,\infty}$ of~$E\to A$, this
isomorphism gives a splitting of $T_G(0)_A\into T_{E/A}(0)\onto
T_B(0)_A$ over~$A^{0,\infty}$. The exponential map for~$+_2$ for the
pulback to $A^{0,\infty}$ of $E\to A$ then gives an isomorphism of
formal groups over~$A^{0,\infty}$:
\[
  \begin{tikzcd}
    G^{0,\infty}\times B^{0,\infty}\times A^{0,\infty}
    \arrow[r, equals]
    & (G^{0,\infty}\times B^{0,\infty})_{A^{0,\infty}}
    \arrow[r, hook, two heads]
    & E_{A^{0,\infty}/A^{0,\infty}}^{0,\infty}
        \arrow[r, equals]
    & E^{0,\infty} \,,
  \end{tikzcd}
\]
where $E_{A^{0,\infty}/A^{0,\infty}}^{0,\infty}$ denotes the
completion along the zero section of the pullback via
$A^{0,\infty}\to A$ of $E\to A$.  The compatibility between $+_1$ and
$+_2$ on $E$ ensures that this isomorphism is an isomorphism of
biextensions, with the trivial biextension structure on the left.

Now that we know what good formal coordinates at $0$ in $E(k)$ are, we
look at the point $e$ in $E(k)$, over $(a,b)$ in $(A\times B)(k)$. We
produce an isomorphism $E^{0,\infty}\to E^{e,\infty}$, using the
partial group laws. Let $E_b$ be the fibre over~$b$ of $E\to B$.
We choose a section
\[
\begin{tikzcd}
  & E_b \arrow[d] \\
A^{a,1}\times \{b\} \arrow[r] \arrow{ru}{\tilde{e}_1} & A\times \{b\}
\end{tikzcd}\quad
\text{with $\tilde{e}_1(a,b)=e$ in~$E(k)$.}
\]
The exponentials for the group laws of $E_b$ and $A$ then give a
section 
\[
\begin{tikzcd}
  & E_b \arrow[d] \\
A^{a,\infty}\times \{b\} \arrow[r] \arrow{ru}{\tilde{e}^\infty_1} & A\times \{b\}\,,
\end{tikzcd}
\]
that we view as an $A^{a,\infty}$-valued point of~$E_b$, and as a
section of the group scheme $E_{A^{a,\infty}}\to A^{a,\infty}$, with
group law~$+_2$. The translation by $\tilde{e}^\infty_1$ on this group
scheme induces translation by~$b$ on $B_{A^{a,\infty}}$, and maps
$(a,0)$, the $0$ element of~$E_a$, to~$e$. Hence it induces an
isomorphism of formal schemes $E^{(a,0),\infty}\to E^{e,\infty}$. In
order to get an isomorphism $E^{0,\infty}\to E^{(a,0),\infty}$, we
repeat the process above, but with the roles of $A$ and $B$
exchanged. We choose a section
$\tilde{0}_2\colon \{a\}\times B^{0,1}\to E_a$ of
$E_a\to \{a\}\times B$. Then the exponential for~$+_2$ gives us a
section $\tilde{0}^\infty_2\colon \{a\}\times B^{0,\infty}\to E_a$ of
$E_a\to \{a\}\times B$. This $\tilde{0}^\infty_2$ is a section of the
group scheme $E_{B^{0,\infty}}\to B^{0,\infty}$, and the translation
on it by $\tilde{0}^\infty_2$ sends $0$ in $E(k)$ to $(a,0)$, hence
gives an isomorphism of formal schemes
$E^{0,\infty}\to E^{(a,0),\infty}$. Composition then gives us an
isomorphism $E^{0,\infty}\to E^{e,\infty}$, and the good formal
coordinates on $E$ at~$0\in E(k)$ give what we call good formal
coordinates at~$e$. Similarly, we get a section $\wt{0}_1^\infty$ of
$E_{A^{0,\infty}}\to A^{0,\infty}$ and a section $\wt{e}_2^\infty$
of $E_{B^{b,\infty}}\to B^{b,\infty}$ giving isomorphisms  
$E^{0,\infty}\to E^{(0,b),\infty}$ and $E^{(0,b),\infty}\to
E^{e,\infty}$, hence by composition a 2nd isomorphism
$E^{0,\infty}\to E^{e,\infty}$. These isomorphisms are equal for a
unique choice of $\tilde{0}_1$ and~$\tilde{e}_2$ (given the choices of
$\tilde{0}_2$ and~$\tilde{e}_1$).

In Section~\ref{sec:closure_and_formal_coords} we
will use that these isomorphisms transport all additions that occur
in~(\ref{eq:D_ttilde_n}) to additions in $E^{0,\infty}$ and therefore
to additions in the trivial formal biextension. 

\subsubsection{Zariski density of the curve in formally trivial
  coordinates}\label{sec:C_formally_Zar_dense}
Let $C$ be as in the beginning of Section~\ref{sec:alg_geometry}. Let
$\wt{C(\C)}$ be the inverse image of $C(\C)$ under the universal cover 
$\wt{T(\C)}\to T(\C)$. Then $\wt{C(\C)}$ is connected since
$\wt{j_b}\colon C\to T$ gives a surjection on complex fundamental
groups. Now we consider the complex analytic variety $\wt{T(\C)}$ as a
complex algebraic variety via the bijection $\wt{T(\C)}=\C^{g+\rho-1}$
as given in~(\ref{eq:univ_cover_T}). The analytic subset
$\wt{C(\C)}$ contains the orbit of $0$ under
$\pi_1(T(\C),1)$. This orbit surjects to the lattice of $J(\C)$ in
$\rmM_{g,1}(\C)$, and over each lattice point, its fibre in
$\rmM_{\rho-1,1}(\C)$ contains a translate of
$2\pi i\rmM_{\rho-1,1}(\Z)$. Hence this orbit is Zariski dense
in~$\C^{g+\rho-1}$. It follows that the formal completion of
$\wt{C(\C)}$ at any of its points is Zariski dense
in~$\C^{g+\rho-1}$: if a polynomial function on $\C^{g+\rho-1}$ is
zero on such a completion, then it vanishes on the connected component
of $\wt{C(\C)}$ of that point, hence on~$\wt{C(\C)}$ and consequently on~$\wt{T(\C)}$.

We express our conclusion in more algebraic terms: for $c\in C(\C)$,
with images $t\in T(\C)$ and in~$P^{\times,\rho-1}(\C)$, each
polynomial in good formal coordinates at $t$ of the biextension
$P^{\times,\rho-1}\to J\times J^\vee$ over~$\C$ that vanishes on
$\wt{j_b}(C_\C^{c,\infty})$, vanishes on~$T_\C^{t,\infty}$. This
statement then also holds with $\C$ replaced by any subfield, or even
any subring of the form $\Z_{(p)}$ with $p$ a prime number, or the
localisation of $\ol{\Z}$ (the integral closure of $\Z$ in~$\C$) at a
maximal ideal.

\subsubsection{The $p$-adic closure in good formal coordinates}
\label{sec:closure_and_formal_coords}
We stay in the situation of Section~\ref{sec:alg_geometry}, but we
denote $G:=\Gm^{\rho-1}$, $A:=J$ and $B:=J^{\vee,0\rho-1}$, and
$E:=P^{\times,\rho-1}$. Let $d_G$, $d_A$, and~$d_B$ be their
dimensions: $d_G=\rho-1$, $d_A=g$ and $d_B=(\rho-1)g$.

Let $p>2$ be a prime number.  From Section~\ref{sec:formal_triv} and
Lemma~\ref{lem:log_and_exp} we conclude that we can choose
\emph{formal} parameters for $E$ at~$0$, over~$\Z_{(p)}$, such that
they converge on the residue polydisk $E(\Z_p)_{\ol{0}}$, and such
that they induce the trivial biextension structure on
$\Z_p^{d_G}\times\Z_p^{d_A}\times\Z_p^{d_B}$. We keep the notation
of Section~\ref{sec:formal_triv}, for $e$ in $E(\Z_p)$, lying over
$(a,b)$ in $(A\times B)(\Z_p)$. This $e$ plays the role that $\wt{t}$ has
at the beginning of Section~\ref{sec:closure-finiteness}. As explained
at the end of Section~\ref{sec:formal_triv}, we may and do assume that
$e$ is in $E(\Z_p)_{\ol{0}}$, and hence $a\in A(\Z_p)_{\ol{0}}$ and
$b\in B(\Z_p)_{\ol{0}}$. 

Assume now that, as in Section~\ref{sec:closure-finiteness}, for
$i,j\in\{1,\ldots,r\}$, we have~$x_i$ in $A(\Z_p)_{\ol{0}}$ and $y_j$
in $B(\Z_p)_{\ol{0}}$, and $e_{i,j}$ in $E(\Z_p)_{\ol{0}}$ over
$(x_i,y_j)$, and $r_i$ in $E(\Z_p)_{\ol{0}}$ over $(x_i,b)$
and $s_j$ in $E(\Z_p)_{\ol{0}}$ over $(a,y_j)$. We denote the images
of all these elements under the bijection
\[
E(\Z_p)_{\ol{0}} \lto \Z_p^{d_G}\times\Z_p^{d_A}\times\Z_p^{d_B}
\]
as follows:
\begin{align*}
  x_i &\mapsto (0,x_i,0)\,,&  y_j &\mapsto (0,0,y_j)\,,&
  e_{i,j} &\mapsto (g_{i,j},x_i,y_j) \\
  r_i &\mapsto (r_i',x_i,b)\,,& s_j &\mapsto (s_j',a,y_j)\,,&
  e&\mapsto (e',a,b)\,.
\end{align*}
Then a straightforward computation shows that the image of $D(n)$ as
defined in (\ref{eq:D_ttilde_n}) is
\[
  \left(
    e'+\sum_in_ir'_i + \sum_jn_js'_j + \sum_{i,j}n_in_jg_{i,j},\,\,
    a+\sum_in_ix_i,\,\,
    b+\sum_jn_jy_j
  \right)\quad \text{in $\Z_p^{d_G}\times\Z_p^{d_A}\times\Z_p^{d_B}$.}
\]
The conclusion is that in these coordinates, the map
\[
  \kappa\colon\Z_p^r\lto \Z_p^{d_G}\times\Z_p^{d_A}\times\Z_p^{d_B}
\]
is a polynomial map, hence the Zariski closure of its image is an
algebraic variety of dimension at most~$r$. 

\subsubsection{Proof of finiteness}
The proof is by contradiction.  So assume that $r<g+\rho-1$, and that
$C(\Q)$ is infinite. Let $p>2$ be a prime number. Then there is a
$u\in C(\F_p)$ such that the residue disk $C(\Z_p)_u$ contains
infinitely many elements of~$C(\Q)$, hence infinitely many elements in
the image of~$\kappa$ of Section~\ref{thm:p-adic_closure}. By
construction, $\kappa(\Z_p^r)$ is contained in~$T(\Z_p)_t$. The image
of $T(\Z_p)_t$ in $\Z_p^{d_G}\times\Z_p^{d_A}\times\Z_p^{d_B}$ is
$\Z_p^{\rho-1}\times\Z_p^g$, with $\Z_p^g$ embedded in
$\Z_p^{d_A}\times\Z_p^{d_B}$ as a sub-$\Z_p$-module.  By the previous
section, the Zariski closure of $\kappa(\Z_p^r)$ in
$\Z_p^{d_G}\times\Z_p^{d_A}\times\Z_p^{d_B}$ is of dimension at
most~$r$. Hence there are non-zero polynomial functions on
$\Z_p^{\rho-1}\times\Z_p^g$ that are zero on infinitely many points
of~$C(\Z_p)_u$, and hence are zero on a non-empty open smaller
disk. This contradicts, via a ring morphism $\Z_p\to\C$, the
conclusion of Section~\ref{sec:C_formally_Zar_dense}.

\subsection{The relation with $p$-adic heights}\label{sec:p-adic_heights}
We want to compare the approach to quadratic Chabauty in this article
to the one in~\cite{Ba-Do-1}, by answering the question: which local
analytic coordinates on $T(\Z_p)$ and $C(\Q_p)$ lead to the equations,
in terms of $p$-adic heights, for the quadratic Chabauty set
$C(\Q_p)_2$ in~\cite{Ba-Do-1}? Before we do this, we note that the
Poincar\'e biextension has played a role in Arakelov theory, and in the
theory of $p$-adic heights, since a long time: see~\cite{Za}, \cite{M-T}
and~\cite{M-B_MP}. Moreover, \cite{Betts} gives a detailed description
how Kim's cohomological approach relates to $p$-adic heights in the
context of $\Gm$-torsors on abelian varieties. 

Let $p>2$ be a prime number of good reduction for~$C$.  We consider
the Poincar\'e torsor as $\calM^\times$ on $(J\times J)_{\Q_p}$
via~(\ref{eq:P_and_M_and_norm}), and we use the description of
$\calM^\times$ given in~(\ref{eq:M_and_N_no_base}).

Let $\mathcal D$ be the subset
$\mathrm{Div}^0(C_{\Q_p}) \times \mathrm{Div}^0(C_{\Q_p})$ made of
pairs of divisors $(D_1,D_2)$ having disjoint support. Let $W$ be an
isotropic complement of $\Omega^1_{C_{\Q_p}/\Q_p}(C_{\Q_p})$ in
$\rmH^1_{\mathrm{dR}}(C_{\Q_p}/\Q_p)$ and let
$\log\colon\Q_p^\times \to \Q_p$ be a group morphism extending the
formal logarithm on $1+p\Z_p$. With these choices made, Coleman and
Gross (\cite[(5.1)]{CoGr}) define the function (there
denoted~$\langle{\cdot},{\cdot}\rangle$)
\[
  h_p\colon \mathcal D \to \Q_p\,,
\]
the $p$-part of the $p$-adic height pairing. We define the function
\[
  \psi\colon  \calM^\times(\Q_p) \lto \Q_p
\]
by demanding that for every effective $D_1$ and $D_2$ in
$\mathrm{Div}(C_{\Q_p})$ of the same degree and every $E$ in
$\mathrm{Div}^0(C_{\Q_p})$, and every $\lambda$ in~$\Q_p^\times$, the element
\[
\lambda{\cdot}\Norm_{D_1/\Q_p}(1) \otimes\Norm_{D_2/\Q_p}(1)^{-1}
\]
in
\[
  \calM^\times(\calO_{C_{\Q_p}}(E),\Sigma(D_1)-\Sigma(D_2)) =
  \left(\Norm_{D_1/\Q_p}\calO_{C_{\Q_p}}(E) \otimes
  \Norm_{D_2/\Q_p}\calO_{C_{\Q_p}}(-E)\right)^{\times}
\]
is sent to
\[
  \psi(\lambda{\cdot}\Norm_{D_1/\Q_p}(1) \otimes\Norm_{D_2/\Q_p}(1)^{-1})
  := h_p(D_1-D_2, E)  + \log \lambda\,.
\]
That this depends only on the linear equivalence classes of
$D_1{-}D_2$ and $E$ follows from~(\ref{eq:phi_dirty}), plus
(see~\cite[Proposition 5.2]{CoGr}) the fact that $h_p$ is biadditive,
symmetric and, for any non-zero rational function $f$ on $C_{\Q_p}$
and any $D$ in $\mathrm{Div}^0(C_{\Q_p})$ with support disjoint from
that of~$\divisor(f)$, we have $h_p(D,\divisor(f)) =
\log(f(D))$. Moreover, expressing $h_p$ in terms of a Green function
$G$ as in \cite[Theorem 7.3]{Bes}, we deduce that, in each residue
disk of $\calM^\times(\Z_p)$, $\psi$ is given by a power series.  Let
$\omega_1, \ldots, \omega_g$ be a basis of
$\Omega^1_{C_{\Q_p}/\Q_p}(C_{\Q_p})$.  This basis gives a unique
morphism of groups $\log_J\colon J(\Q_p)\to\Q_p^g$ that extends the
logarithm of Lemma~\ref{lem:log_and_exp}. We define
\[
  \Psi := (\log_J\circ\pr_{J,1}, \log_J\circ\pr_{J,2}, \psi) \colon
  \calM^\times(\Q_p) \lto \Q_p^g \times \Q_p^g  \times \Q_p\,.
\]
By the biadditivity of~$h_p$, $\Psi$ is a morphism of biextensions,
with the trivial biextension structure on
$\Q_p^g \times \Q_p^g \times \Q_p$ as in~(\ref{eq:trivial_biext}). As
$p>2$, $\Psi$ induces, from each residue polydisk to its image, a
homeomorphism given by power series. Pulling back the coordinate
functions on $\Q_p^{2g+1}$ gives, for every
$x \in \calM^\times(\F_p)$, coordinates on~$\calM^\times(\Z_p)_x$.

We describe $\wt{j_b}$ and $\kappa$ in these coordinates. It suffices
to describe, for each $i$ in~$\{1,\ldots,\rho{-}1\}$,
$\wt{j_{b,i}} \colon C \to T_i$, and from now on we omit the
dependence on~$i$. For each $c \in C(\F_p)$, on
$T(\Z_p)_{\wt{j_b}(x)}$ we use the coordinates
$x_1 := f^*t_1, \ldots ,x_g:= f^*t_g$, $z:=f^*t_{2g+1}$ where $f$ is
the map $T \to \calM^\times$ and $t_1,\ldots, t_{2g+1}$ are the
coordinates on $\calM^\times(\Z_p)_{\wt{j_b}(c)}$ we just defined.
Since the map $\Psi$ is a morphism of biextensions, for $j$ in
$\{1,\ldots,g\}$, $x_j\circ\kappa$ is a polynomial of degree at
most~$1$, and $z\circ \kappa$ is a polynomial of degree at most~$2$.
As explained in Section~\ref{sec:explicit_j_b_tilde}, over~$\Z_p$,
$\wt{j_b}$ is given by a line bundle $\calL$ over $(C\times C)_{\Z_p}$
rigidified along $(C \times \{b\})_{\Z_p}$ and along the diagonal with
two sections $l_b$ and~$l$. Choosing a section that trivializes
$\calL$ on an open subset of $(C\times C)_{\Z_p}$ containing $(b,b)$,
$(c,b)$, and $(c,c)$ in $(C\times C)(\F_p)$ we get a divisor $D$ on
$(C\times C)_{\Z_p}$ whose support is disjoint from $(c,b)$
and~$(c,c)$, and an isomorphism between $\calL$ and $\calO(D)$ on
$(C \times C)_{\Z_p}$.  After modifying $D$ with a principal
horizontal divisor and a principal vertical divisor
$D|_{C {\times} \{b\}}$ and $\diag^*D$ are both equal to the the zero
divisor on~$C_{\Z_p}$, hence $l_b$ and $l$ are the extensions of
elements of~$\Q_p$, interpreted as rational sections of $\calO(D)$ on
$(C\times C)_{\Z_p}$. By Propositions~\ref{prop:M_L_D_E}
and~\ref{prop:jbtilde_precise}, there exists a unique
$\lambda \in \Q_p^\times$ such that, for each $d \in C(\Z_p)_c$,
\[
  \wt{j_b}(d) =
  \lambda \cdot \Norm_{d/\Z_p}(1) \otimes
  \Norm_{b/\Z_p}(1)^{-1}  \in \calM^\times(j_b(d), D|_{\{d\}{\times} C})\,.
\]
Since $x_j$ is the $j$-th coordinate of $\log_J$ and since $z$ is the
pullback of $\psi$, we deduce that  
\[
  x_1(\wt{j_b}(d)) = \int_b^d \omega_1,\, \ldots\,,\quad
  x_g(\wt{j_b}(d)) = \int_b^d \omega_g, \quad
  z(\wt{j_b}(d)) = h_p(d-b, D|_ {\{d\}{\times}C}) + \log \lambda \,,
\]
and, by \cite[Proof of Theorem 1.2]{Ba-Do-1} and
\cite[Lemma~5.5]{BDMTV}, the function
$d \mapsto h_p(d{-}b, D|_ {\{d\}{\times}C})$ is a sum of double
Coleman integrals.

It should now be easy to exactly interpret geometrically the
cohomological approach, showing that in the coordinates used here, the
equations for $C(\Q_p)_2$ are precisely equations for the intersection
of $C(\Q_p)$ and the $p$-adic closure of~$T(\Z)$. For doing
computations, one can do them in the geometric context of this
article, or, as in~\cite{BDMTV}, in terms of the \'etale fundamental
group of~$C$.  The connection between these is then given by $p$-adic
local systems on~$T$.

\paragraph{Author contributions}
\addcontentsline{toc}{section}{Author contributions}
This project started with an idea of Edixhoven in December 2017. From
then on Edixhoven and Lido worked together on the
project. Section~\ref{sec:example} is due entirely to
Lido. Section~\ref{sec:rems} was written in July and August 2020. 

\paragraph{Acknowledgements}
\addcontentsline{toc}{section}{Acknowledgements}
We thank Steffen M\"uller, Netan Dogra, Jennifer Balakrishnan, Kamal
Khuri-Makdisi, Jan Vonk, Barry Mazur and Gerd Faltings for discussions
and correspondence we had with them, Michael Stoll for suggesting the
title of this article, Pim Spelier for his MSc thesis~\cite{Spelier},
and Sachi Hashimoto for the cartoon guide~\cite{Ha}. Finally, we thank
the referee for their very thorough work and for making us increase
the number of propositions and lemmas so that, hopefully, this article
has become easier to parse and to digest.

\end{document}